\title{Magnitude homology equivalence of Euclidean sets}
\author{Adrián Doña Mateo}
\address{School of Mathematics\\
University of Edinburgh\\
Edinburgh\\
United Kingdom}
\email{adrian.dona@ed.ac.uk}
\urladdr{https://www.maths.ed.ac.uk/~adona}
\author{Tom Leinster}
\address{School of Mathematics\\
University of Edinburgh\\
Edinburgh\\
United Kingdom}
\email{tom.leinster@ed.ac.uk}
\urladdr{https://www.maths.ed.ac.uk/~tl}
\theoremstyle{plain}
\newtheorem{theorem}{Theorem}[section]
\newtheorem{lemma}[theorem]{Lemma}
\newtheorem{proposition}[theorem]{Proposition}
\newtheorem{corollary}[theorem]{Corollary}
\theoremstyle{definition}
\newtheorem{definition}[theorem]{Definition}
\newtheorem{remark}[theorem]{Remark}
\newtheorem{remarks}[theorem]{Remarks}
\newtheorem{example}[theorem]{Example}
\newtheorem{examples}[theorem]{Examples}
\let\temp\phi
\let\phi\varphi
\let\varphi\temp
\newcommand{\eps}{\epsilon}
\newcommand{\Met}{\mathbf{Met}}
\newcommand{\N}{\mathbb{N}}
\DeclareMathOperator{\im}{im}
\newcommand{\demph}{\textbf}
\newcommand{\bref}[1]{(\ref{#1})}
\newcommand{\MC}[2]{C_{#1, #2}}
\newcommand{\Mh}[1]{H_{#1}}
\newcommand{\MH}[2]{H_{#1, #2}}
\newcommand{\Pc}[1]{P_{#1}}
\newcommand{\PC}[2]{P_{#1, #2}}
\newcommand{\THIN}[2]{T_{#1, #2}}
\DeclareMathOperator{\intc}{ic}
\DeclareMathOperator{\conv}{conv}
\DeclareMathOperator{\cconv}{\overline{conv}}
\DeclareMathOperator{\aff}{aff}
\DeclareMathOperator{\core}{core}
\DeclareMathOperator{\Vol}{Vol}
\newcommand{\preq}{\preceq}
\newcommand{\sub}{\subseteq}
\renewcommand{\epsilon}{\varepsilon}
\renewcommand{\emptyset}{\varnothing}
\newcommand{\cell}[4]{\put(#1,#2){\makebox(0,0)[#3]{#4}}}
\newcommand{\oppair}[4]{%
    \setlength\mathsurround{0pt}%
    \begin{tikzcd}[cramped, sep=scriptsize, ampersand replacement=\&]%
        #1 \ar[r, "#3", shift left] \& #2 \ar[l, "#4", shift left]%
    \end{tikzcd}%
    \setlength\mathsurround{0.8pt}%
}
\newcommand{\oppairu}[2]{%
    \setlength\mathsurround{0pt}%
    \begin{tikzcd}[cramped, sep=small, ampersand replacement=\&]%
        #1 \ar[r, shift left] \& #2 \ar[l, shift left]%
    \end{tikzcd}%
    \setlength\mathsurround{0.8pt}%
}
\newcommand{\parpairu}[2]{%
    \setlength\mathsurround{0pt}%
    \begin{tikzcd}[cramped, sep=small, ampersand replacement=\&]%
        #1 \ar[r, shift left] \ar[r, shift right] \& #2%
    \end{tikzcd}%
    \setlength\mathsurround{0.8pt}%
}
\begin{document}

\begin{abstract}    
Magnitude homology is an $\R^+$-graded homology theory of metric spaces
that captures information on the complexity of geodesics. Here we address
the question: when are two metric spaces magnitude homology equivalent, in
the sense that there exist back-and-forth maps inducing mutually inverse
maps in homology? We give a concrete geometric necessary and sufficient
condition in the case of closed Euclidean sets.  Along the way, we
introduce the convex-geometric concepts of inner boundary and core, and
prove a strengthening for closed convex sets of the classical theorem of
Carath\'eodory.
\end{abstract}

\maketitle


\section{Introduction}
\label{sec:intro}

Magnitude homology is a homology theory of enriched
categories~\cite{Leinster2021}. For ordinary categories, it specialises to
the standard homology of categories, which itself includes group
homology and poset homology. But what has sparked the most interest in
magnitude homology (as catalogued in~\cite{MAGBIB}) is that it provides a
homology theory of metric spaces, taking advantage of Lawvere's insight
that metric spaces can be viewed as enriched categories~\cite{LawvMSG}.

The magnitude homology of metric spaces is a genuinely metric
invariant. For example, whereas topological homology detects the existence
of holes, magnitude homology detects their diameter (Theorem~5.7
of~\cite{Kaneta2021}). Whereas the homology of a topological space is
trivial if it is contractible, the magnitude homology of a metric space is
trivial if it is convex (Corollary~\ref{cor:empty_inner_boundary} below,
originally proved by Kaneta and Yoshinaga and, independently, by Jubin). A
theorem of Asao (Theorem~5.3 of~\cite{AsaoMHG}) states that the second
magnitude homology group of a metric space $X$ is nontrivial if $X$
contains a closed geodesic. Gomi proposes a slogan: `The
more geodesics are unique, the more magnitude homology is trivial'
(\cite{GomiMHG}, p.~5).

The story of magnitude homology began with graphs. Hepworth and
Willerton~\cite{Hepworth2017} defined the magnitude homology of a graph and
established its basic properties, treating graphs as metric spaces in which
the distance between two vertices is the number of edges in a shortest path
between them. Later, Leinster and Shulman extended their definition to a
large class of enriched categories, including metric
spaces~\cite{Leinster2021}. In this work, we suppress the enriched
categorical context, working directly and explicitly with metric spaces.

Magnitude homology is not the first homology theory for metric
spaces. Persistent homology, central to topological data analysis, is also
such a theory. It is natural to compare the two theories, as has been done
by Otter~\cite{Otte} and Cho~\cite{Cho}. Here we just note that magnitude
homology and persistent homology capture quite different information about
a space, and that work is underway to use magnitude homology in the
analysis of networks (Giusti and Menara~\cite{GiMe}).

Like persistent homology, the magnitude homology of metric spaces $X$ is a
\emph{graded} homology theory. There is one group $\MH{n}{\ell}(X)$ for
each integer $n \geq 0$ and real number $\ell \geq 0$, where $\ell$ is to
be regarded as a length scale.

As Hepworth and Willerton pointed out in the introduction
to~\cite{Hepworth2017}, magnitude homology is similar in spirit to Khovanov
homology, a graded homology theory of links. The graded Euler
characteristic of Khovanov homology is the Jones polynomial, and the graded
Euler characteristic of magnitude homology is the invariant of metric
spaces known as magnitude \cite{Leinster2013,MAGBIB,MMSCG}.

Magnitude is the canonical measure of the size of an enriched category. For
ordinary categories, under finiteness hypotheses, the magnitude is the
Euler characteristic of the nerve. In particular, when a finite set is seen
as a discrete category, magnitude is cardinality. For metric spaces,
magnitude is geometrically highly informative. For example, for suitable
spaces $X$, the asymptotics of the magnitude of the rescaled space $tX$ as
$t \to \infty$ are known to determine the Minkowski dimension, volume and
surface area of $X$ (Corollary~7.4 of Meckes~\cite{MeckMDC}, Theorem~1 of
Barcel\'o and Carbery~\cite{BaCa}, and Theorem~2(d) of Gimperlein and
Goffeng~\cite{GiGoMFD}, respectively).

The categorification theorem mentioned, that the Euler characteristic of
magnitude homology is magnitude, only holds for \emph{finite} metric spaces
(Theorem~7.12 of~\cite{Leinster2021}). Although there is currently no
categorification theorem for non-finite spaces, the intention is that
magnitude homology is the categorification of magnitude, and shares with it
the property of capturing important geometric features.

This paper addresses the question: when do two metric spaces have the same
magnitude homology? 

To answer this, we first need to make `same' precise. For any homology
theory of any kind of object, there are at least three possible
meanings. The first is simple: just ask that our objects $X$ and $Y$
satisfy $H_n(X) \cong H_n(Y)$ for all $n \geq 0$. This is generally seen as
too loose a relation, and it is too loose for us too. For example, Roff
provided an example of metric spaces whose first magnitude homology groups
are isomorphic but whose first singular homology groups are not
(Section~4.6 of~\cite{Roff2022}). And in
Remark~\ref{rmks:opposing-maps}\bref{rmk:om-circles} below, we describe two
simple but non-homeomorphic metric spaces $X$ and $Y$ such that
$\MH{n}{\ell}(X) \cong \MH{n}{\ell}(Y)$ for all $n$ and $\ell$.

The second option is quasi-isomorphism: generate an equivalence relation on
spaces by declaring them equivalent if there exists a map between them
inducing an isomorphism in homology. The third is more demanding still:
declare $X$ and $Y$ to be equivalent if there exist maps $\oppairu{X}{Y}$
whose induced maps in homology are mutually inverse.

Here we take the third option, defining metric spaces $X$ and $Y$ to be
\emph{magnitude homology equivalent} if there exist maps $\oppairu{X}{Y}$
whose induced maps $\oppairu{\MH{n}{*}(X)}{\MH{n}{*}(Y)}$ are mutually
inverse for all $n \geq 1$. Our maps of metric spaces are those that are short (equivalently
1-Lipschitz, or contractions or distance-decreasing in the non-strict
sense). When metric spaces are viewed as enriched categories, these are the
enriched functors.

Our main theorem (Theorem~\ref{thm:main}) states that two closed subsets of
$\R^N$ are magnitude homology equivalent if and only if they satisfy an
entirely concrete geometric condition: that their `cores' are isometric. 

The core is easily defined. Two distinct points $x$ and $y$ of a metric
space $X$ are \emph{adjacent} if there is no other point $p$ between them
(that is, satisfying $d(x, p) + d(p, y) = d(x, y)$). The \emph{inner
boundary} of $X$ is the set of all points adjacent to some other
point (Figure~\ref{fig:inner_boundary}(a)). For example, the inner boundary of a closed annulus is its inner
bounding circle. Finally, for $X \sub \R^N$, the \emph{core} of $X$ is
the intersection of $X$ with the closed convex hull of its inner boundary
(Figure~\ref{fig:inner_boundary}(b)).

\begin{figure}
    \centering
    \begin{subfigure}[h]{0.45\textwidth}
        \includegraphics[width=\textwidth]{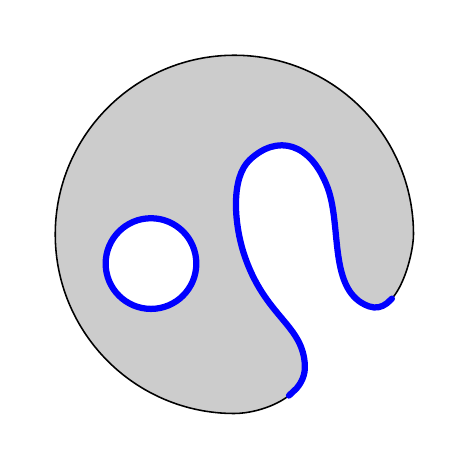}
        \caption{}
    \end{subfigure}
    \hfill
    \begin{subfigure}[h]{0.45\textwidth}
        \includegraphics[width=\textwidth]{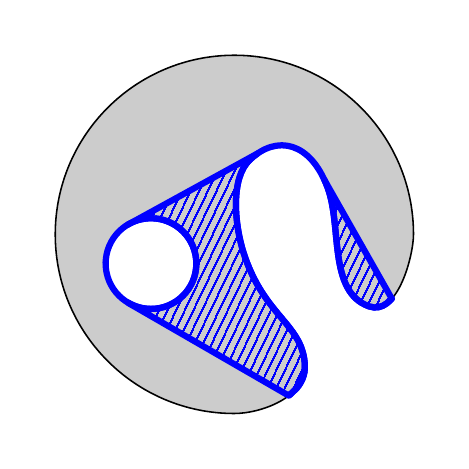}
        \caption{}
    \end{subfigure}

    \caption{(a) A closed subset of $\R^2$, with its inner boundary shown in
    thick blue; (b) the core of the same set, shaded (see
    Section~\ref{sec:core}).}
    \label{fig:inner_boundary}
\end{figure}

In fact, our main theorem says more. Let $X$ and $Y$ be subsets of
Euclidean space. Magnitude homology equivalence of $X$ and $Y$ means that
there exist maps $\oppairu{X}{Y}$ whose induced maps
$\oppairu{\MH{n}{*}(X)}{\MH{n}{*}(Y)}$ are mutually inverse for \emph{all}
$n \geq 1$, but we show that this is also equivalent to them being mutually
inverse for \emph{some} $n \geq 1$.

This startling result is true not because magnitude homology is trivial,
but because $\R^N$ and its subsets are in a certain sense rather simple
metric spaces. For example, any path in $\R^N$ that is locally geodesic is
globally geodesic. Magnitude homology reflects the complexity of geodesics
in a space, so it is unsurprising that its behaviour on subsets of $\R^N$
is rather simple too. 

For arbitrary metric spaces, magnitude homology can be much more complex,
as has been thoroughly established in the case of graphs. For example,
the magnitude homology groups of a subset of $\R^N$ are all free abelian,
whereas Sazdanovic and Summers showed that every finitely generated abelian
group arises as a subgroup of some magnitude homology group of some graph
(Theorem~3.14 of~\cite{SaSu}).  The intrinsic complexity of the magnitude
homology of graphs has been further analysed in recent work of Caputi and
Collari~\cite{CaCo}.

The structure of this paper is as follows. In Section~\ref{sec:aligned}, we
introduce \emph{aligned} metric spaces, which are those in which the
betweenness relation behaves as in subspaces of $\R^N$. This is the most
general class of metric spaces that we consider. We show that alignedness
is equivalent to the conjunction of two properties studied previously:
being geodetic and having no 4-cuts.

The convex-geometric parts of this work require a strengthening of
Carath\'eodory's classical theorem in the case of closed sets
(Section~\ref{sec:cara}). With that in hand, we study inner boundaries and
cores (Sections~\ref{sec:ib} and~\ref{sec:core}). For example, the closed
Carath\'eodory theorem is used to prove a fundamental result: any point in
the convex hull of a closed set $X \sub \R^N$ is either in $X$ itself or
in the convex hull of its inner boundary
(Proposition~\ref{propn:conv-union}). This in turn is used to show that if
$X$ is not convex then every point of $X$ has a unique closest point in its
core---even though the core is not in general convex
(Proposition~\ref{propn:core-retract}).

We then review the magnitude homology of metric spaces, from the beginning
(Section~\ref{sec:mhms}). The magnitude homology of aligned spaces
(Section~\ref{sec:mhst}) is vastly simpler than the general case, thanks to
the structure theorem of Kaneta and Yoshinaga
(Theorem~\ref{thm:thin_chains}). For instance, this theorem implies that
two maps of aligned spaces $f, g: X \to Y$ that agree on the inner boundary
of $X$ induce the same map in homology in positive degree. We improve
slightly on their result, proving that the chain maps induced by $f$ and
$g$ are chain homotopic (Theorem~\ref{thm:homotopic}). But more
importantly, Kaneta and Yoshinaga's theorem leads to a concrete geometric
criterion for when two maps $\oppairu{X}{Y}$ induce mutually inverse maps
in magnitude homology (Theorem~\ref{thm:op-tfae}).

In Section~\ref{sec:mheq}, we introduce magnitude homology equivalence and
prove that every closed, nonconvex subset of $\R^N$ is magnitude homology
equivalent to its core (Theorem~\ref{thm:equiv-to-core}). Since taking the
core is an idempotent process, this theorem provides a canonical
representative for each magnitude homology equivalence class of closed
Euclidean sets. And it is a crucial ingredient in our main theorem
(Section~\ref{sec:main}), which gives several necessary and sufficient
conditions for two closed Euclidean sets to be magnitude homology
equivalent, one of which is that their cores are isometric. We also provide
examples to show that for magnitude homology, the three notions of
homological sameness discussed above are genuinely different.

\section{Aligned spaces}
\label{sec:aligned}

Throughout this work, a \emph{map} of metric spaces means one that is short in the following sense.

\begin{definition}
\label{defn:map}
Let $X$ and $Y$ be metric spaces. A function $f: X \to Y$ is a
\demph{short map}, or just a \demph{map}, if $d(f(x), f(x'))
\leq d(x, x')$ for all $x, x' \in X$.
\end{definition}

When metric spaces are viewed as enriched categories, these are the
enriched functors.

An \demph{isometry} is a map that is distance-preserving, i.e.\ $d(f(x), f(x')) =
d(x, x')$, but need not be surjective.

For the rest of this section, let $X$ be a metric space. 

\begin{definition}\label{def:intervals}
    Let $x, y, z \in X$. We say that $y$ is \demph{between} $x$ and $z$,
    and write $x \preq y \preq z$, if $d(x,z) = d(x,y) + d(y,z)$. If also
    $x \neq y \neq z$, then $y$ is \demph{strictly between} $x$ and $z$,
    written as $x \prec y \prec z$.

    For $a, b \in X$, we define the \demph{closed interval}
    \[
    [a, b] = \{ x \in X : a \preceq x \preceq b \}.
    \]
    The intervals $(a, b)$, $[a, b)$ and $(a, b]$ are defined similarly.
\end{definition}

We will use two elementary facts without mention: first, that $[a, b]$ is
topologically closed in $X$, and second, that if $x \preq y \preq y'$ and
$x \preq y' \preq y$ then $y = y'$.

\begin{definition}\label{def:aligned}
    The space $X$ is \demph{aligned} if for all $n \geq 1$ and $x_0,
    \ldots, x_n \in X$ satisfying $x_{i - 1} \prec x_i \prec x_{i+1}$
    whenever $0 < i < n$, we have
    \[[x_0,x_n] = [x_0,x_1] \cup [x_1,x_2] \cup \cdots \cup [x_{n-1},x_n].\]
\end{definition}

For example, Euclidean space $\R^N$ is aligned. Any subspace of an aligned
space is also aligned.

We view graphs (taken to be connected and undirected) as metric spaces
as follows: the points are the vertices, and the distance between two
vertices is the number of edges in a shortest path connecting them.

\begin{examples}
\label{egs:aligned-graphs}
\begin{enumerate}
\item 
Any tree, seen as a metric space, is aligned. 

    \item
    \label{eg:sg-non}
    None of the graphs
    \[
        \begin{array}{c}
        \begin{tikzpicture}[xscale=-1, rotate=-18, scale=0.8]
            \node (v) at (0:1) {$v$};
            \node (w) at (72:1) {$w$};
            \node (x) at (144:1) {$x$};
            \node (y) at (216:1) {$y$};
            \node (z) at (-72:1) {$z$};
            \draw[-] (v) -- (w) -- (x) -- (y) -- (z) -- (v);
        \end{tikzpicture}
        \end{array}
        \qquad
        \begin{array}{c}
        \begin{tikzpicture}[scale=0.5]
            \node (w) at (-1,1) {$w$};
            \node (x) at (1,1) {$x$};
            \node (y) at (1,-1) {$y$};
            \node (z) at (-1,-1) {$z$};
            \draw[-] (w) -- (x) -- (y) -- (z) -- (w) -- (y);
        \end{tikzpicture}
        \end{array}
        \qquad
        \begin{array}{c}
        \begin{tikzpicture}[scale=0.65]
            \node (w) at (-1,1) {$w$};
            \node (x) at (1,1) {$x$};
            \node (y) at (1,-1) {$y$};
            \node (z) at (-1,-1) {$z$};
            \node (v) at (0, 0) {$v$};
            \draw[-] (w) -- (x) -- (y) -- (z) -- (w) -- (v) -- (y);
        \end{tikzpicture}
        \end{array}
    \]
    is aligned. In the first, $v \prec w \prec x$ and $w \prec x \prec y$,
    but
    \[
    [v, y] = \{v, z, y\} 
    \neq 
    \{v, w, x, y\} = [v, w] \cup [w, x] \cup [x, y].
    \]
    In both the second and the third, $x \prec y \prec z$ but $[x, y] \cup
    [y, z]$ is a proper subset of $[x, z]$.
\end{enumerate}
\end{examples}

We will use without mention the fact that in an aligned space, the equation
\[
[x_0, x_2] = [x_0, x_1] \cup [x_1, x_2]
\]
holds not just when $x_0 \prec x_1 \prec x_2$, but also, slightly more
generally, when $x_0 \preq x_1 \preq x_2$.

\begin{lemma}
\label{lemma:int-sub}
For points $a, b, x, y$ of an aligned space, if $x, y \in [a, b]$ then $[x,
y] \sub [a, b]$.
\end{lemma}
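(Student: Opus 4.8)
The plan is to reduce everything to the defining property of aligned spaces by first observing that $[a,b]$ decomposes at the point $x$. Since $x \in [a,b]$ means $a \preq x \preq b$, the (already-noted) non-strict form of the alignedness condition gives
\[
[a, b] = [a, x] \cup [x, b].
\]
Because $y \in [a,b]$, this forces $y \in [a,x]$ or $y \in [x,b]$, and I would treat these two cases separately. Morally, this is the statement that the betweenness relation restricted to $[a,b]$ is a total order with bottom $a$ and top $b$, and that $[a,b]$ is ``order-convex''; but there is no need to phrase it that way.

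In the case $y \in [x,b]$ we have $x \preq y \preq b$, so applying the non-strict alignedness condition again (now decomposing $[x,b]$ at $y$) gives $[x, b] = [x, y] \cup [y, b]$, whence $[x,y] \sub [x,b] \sub [a,b]$. The case $y \in [a,x]$ is symmetric: here $a \preq y \preq x$, so $[a, x] = [a, y] \cup [y, x]$, and since $[y,x] = [x,y]$ — the betweenness relation $x \preq p \preq y$ is unchanged under swapping the two outer arguments, as $d(x,y) = d(y,x)$ — we conclude $[x,y] \sub [a,x] \sub [a,b]$. Either way, $[x,y] \sub [a,b]$.

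I do not expect a genuine obstacle: the proof is just two invocations of the non-strict betweenness decomposition, plus the trivial observation $[x,y] = [y,x]$. The only thing to be careful about is the bookkeeping — making sure that in each case the correct chain $a \preq x \preq b$, respectively $x \preq y \preq b$ or $a \preq y \preq x$, is what licenses the decomposition being used — and that the argument never secretly assumes $x$ and $y$ are comparable without first deriving it from the decomposition of $[a,b]$.
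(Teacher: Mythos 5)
Your proof is correct and takes essentially the same approach as the paper: two applications of the non-strict alignedness decomposition. The paper decomposes $[a,b]$ at $y$ first and invokes ``without loss of generality'' rather than treating the two cases explicitly, but that is a purely cosmetic difference.
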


The alignedness condition cannot be dropped. For example, the third
graph of Example~\ref{egs:aligned-graphs}\bref{eg:sg-non} has $w, y
\in [x, z]$ but $v \in [w, y] \setminus [x, z]$. 

\begin{proof}
Let $x, y \in [a, b]$. Then $[a, b] = [a, y] \cup [y, b]$, so without loss
of generality, $x \in [a, y]$. But then $[a, y] = [a, x] \cup [x, y]$, so
$[x, y] \sub [a, y] \sub [a, b]$.
\end{proof}

Thus, intervals $[a, b]$ in an aligned space are convex in the following
sense. 

\begin{definition}
A subset $A$ of $X$ is \demph{convex} in $X$ if $[x,y] \subseteq A$
whenever $x, y \in A$. The \demph{convex hull} $\conv(A)$ of $A$ in $X$ is
the intersection of all convex subsets of $X$ containing $A$.  Its
\demph{closed convex hull} $\cconv(A)$ is the intersection of all closed
convex subsets of $X$ containing $A$.
\end{definition}

Since an arbitrary intersection of convex sets is convex, $\conv(A)$ is the
smallest convex set containing $A$, and similarly for $\cconv(A)$.

\begin{remark}
\label{rmk:L}
In $\R^N$, the closure of a convex set is closed, so the closed convex hull
is the closure of the convex hull. Both these statements are false in an
arbitrary aligned space. For example, take $X$ to be the \textsf{L}-shaped
subspace
\[
\bigl[ (0, 0), (1, 0) \bigr]
\cup
\bigl[ (0, 0), (0, 1) \bigr]
\]
of $\R^2$, and take $A = ((0, 0), (1, 0)] \cup \{(0, 1)\}$.  Then $A$ is
convex in $X$, but its closure is not. In fact, the closed convex hull of
$A$ in $X$ is $X$ itself.
\end{remark}

\begin{remark}
\label{rmk:ic}
In an arbitrary metric space $X$, convex hulls can be constructed as
follows. For $A \sub X$, let $\intc(A)$ denote the \demph{interval closure}
of $A$, defined as
\[
\intc(A) = \bigcup_{x,y \in A} [x,y].
\]
Then
\[
\conv(A) = \bigcup_{n \geq 0} \intc^n(A), 
\]
since the right-hand side is the smallest convex set containing $A$.
\end{remark}

Intervals in aligned spaces embed into the real line:

\begin{lemma}
\label{lemma:intervals-embed}
For points $a$ and $b$ of an aligned space, the function $d(a, -): [a, b]
\to \R$ is an isometry. In particular, points of $[a, b]$ the same distance
from $a$ are equal.
\end{lemma}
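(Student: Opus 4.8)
The plan is to reduce the claim to the single inequality $d(x, y) \le |d(a, x) - d(a, y)|$ for $x, y \in [a, b]$. The opposite inequality is just the reverse triangle inequality and holds in any metric space, while the ``in particular'' clause then follows at once: if $d(a, x) = d(a, y)$, the isometry property forces $d(x, y) = 0$, hence $x = y$. So all the work is in that one inequality.

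To prove it, I would use alignedness to split $[a, b]$ around $x$. Since $x \in [a, b]$ we have $a \preq x \preq b$, so $[a, b] = [a, x] \cup [x, b]$ --- in the mildly generalised form of the alignedness identity, valid when the middle point need only satisfy $a \preq x \preq b$, which the paper has already flagged. Because $y \in [a, b]$, either $y \in [a, x]$ or $y \in [x, b]$, and I would handle these two cases in turn. In the first case, $d(a, x) = d(a, y) + d(y, x)$ immediately, so $d(x, y) = d(a, x) - d(a, y)$. In the second case, $d(x, b) = d(x, y) + d(y, b)$; feeding this into $d(a, b) = d(a, x) + d(x, b)$ and comparing with $d(a, b) = d(a, y) + d(y, b)$ (valid since $y \in [a, b]$), the term $d(y, b)$ cancels and one is left with $d(a, y) = d(a, x) + d(x, y)$, i.e.\ $d(x, y) = d(a, y) - d(a, x)$. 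Either way $d(x, y) = |d(a, x) - d(a, y)|$, as required.

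There is no real obstacle here: the argument is a streamlined version of the computation already used to prove Lemma~\ref{lemma:int-sub}, and the only thing to watch is invoking alignedness in the correct (non-strict) form. I would present the two cases explicitly rather than trying to unify them, since the small asymmetry between ``$y$ lies on the $a$ side of $x$'' and ``$y$ lies on the $b$ side of $x$'' makes a symmetric treatment no shorter.
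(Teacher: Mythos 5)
Your argument is correct and follows essentially the same route as the paper: both proofs invoke alignedness to split $[a,b]$ around one of the two points and then case-split on which half the other lies in. The only difference is in the second case, where you finish with a short computation using $d(a,b) = d(a,x) + d(x,b) = d(a,y) + d(y,b)$, whereas the paper instead also splits around $y$, symmetrically, and dispatches the overlapping case ($y \in [x,b]$ and $x \in [y,b]$) by observing it forces $x = y$; both closings are equally valid.
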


\begin{proof}
Let $x, y \in [a, b]$. We must prove that $\left| d(a, y) - d(a, x) \right|
= d(x, y)$. 

By alignedness, $[a, b] = [a, x] \cup [x, b]$, so $y \in [a, x]$ or $y \in
[x, b]$. Similarly, $x \in [a, y]$ or $x \in [y, b]$.

If $y \in [x, b]$ and $x \in [y, b]$ then $x = y$ and the result holds
trivially. Otherwise, without loss of generality, $x \not\in [y, b]$, so $x
\in [a, y]$. Hence $d(a, y) = d(a, x) + d(x, y)$ and the result follows.
\end{proof}

The rest of this section relates alignedness to two conditions on metric
spaces that appear in the magnitude homology literature (and were
generalised from graph theory): being geodetic and having no 4-cuts. We
will need this relationship only to show that a theorem of Kaneta and
Yoshinaga applies to aligned spaces (see Theorem~\ref{thm:thin_chains}
below). The reader willing to take this on trust can omit the rest of this
section.

\begin{definition}\label{def:geodetic}
    The metric space $X$ is \demph{geodetic} if whenever $a, b \in X$ and
    $x, y \in [a, b]$, then either (i)~$a \preq x \preq y$ and $x \preq
    y \preq b$, or (ii)~$a \preq y \preq x$ and $y \preq x \preq b$.
\end{definition}

\begin{definition}
    A \demph{4-cut} in $X$ is a 4-tuple $(a, x, y, b)$ of points such that
    $a \prec x \prec y$ and $x \prec y \prec b$, and yet $x, y$ are not
    both in $[a, b]$. The space $X$ has \demph{no 4-cuts} if no such tuple
    exists.
\end{definition}

\begin{examples}
\begin{enumerate}
\item
Every subspace of $\R^N$ is geodetic with no 4-cuts.

\item 
The first graph of Example~\ref{egs:aligned-graphs}\bref{eg:sg-non} is
geodetic, but has a 4-cut $(v, w, x, y)$. 

\item
The second graph of Example~\ref{egs:aligned-graphs}\bref{eg:sg-non} has
no 4-cuts but is not geodetic, since $w, y \in [x, z]$ but $w \not\in [x,
y]$ and $y \not\in [x, w]$. 

\item
The third graph of Example~\ref{egs:aligned-graphs}\bref{eg:sg-non} has a
4-cut $(x, w, v, y)$ and is not geodetic (again, consider $w, y \in [x, z]$).
\end{enumerate}
\end{examples}

\begin{proposition}\label{prop:aligned-eqv}
    A metric space is aligned if and only if it is geodetic and has no 4-cuts.
\end{proposition}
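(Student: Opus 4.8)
The plan is to prove the two implications separately. The forward implication (aligned $\Rightarrow$ geodetic and no 4-cuts) is short. If $(a, x, y, b)$ is a would-be 4-cut, then the $4$-term sequence $(a, x, y, b)$ satisfies the hypothesis of Definition~\ref{def:aligned} with $n = 3$, so $[a, b] = [a, x] \cup [x, y] \cup [y, b]$; in particular $x, y \in [a, b]$, so there is no 4-cut. For geodeticity, take $a, b$ and $x, y \in [a, b]$; the cases $x \in \{a, b\}$ are immediate, and otherwise $a \prec x \prec b$, so alignedness gives $[a, b] = [a, x] \cup [x, b]$ and hence $y \in [a, x]$ or $y \in [x, b]$. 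In the first case $a \preq y \preq x$, and a one-line manipulation of the betweenness equations for $x, y \in [a, b]$ upgrades this to ``$a \preq y \preq x$ and $y \preq x \preq b$''; the second case is symmetric. This is exactly the dichotomy of Definition~\ref{def:geodetic}.

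The converse carries the weight. Assume $X$ is geodetic with no 4-cuts and fix $x_0, \ldots, x_n$ with $x_{i-1} \prec x_i \prec x_{i+1}$ for $0 < i < n$. The crux is the claim, proved by induction on $n$, that
\[
d(x_0, x_n) = \sum_{i=1}^n d(x_{i-1}, x_i).
\]
This single equation forces every triangle inequality along the chain to be an equality, so all $x_i$ lie in $[x_0, x_n]$ and $d(x_0, -)$ sends $x_0, \ldots, x_n$ to a strictly increasing sequence; since geodeticity makes $d(x_0, -)$ an isometry on $[x_0, x_n]$ (the argument of Lemma~\ref{lemma:intervals-embed} works from geodeticity alone), $[x_0, x_n]$ is then totally ordered by distance from $x_0$ with the $x_i$ in order. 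The cases $n \le 2$ are trivial. For $n \ge 3$, the inductive hypothesis applied to $x_0, \ldots, x_{n-1}$ and to $x_1, \ldots, x_n$ yields $x_0 \prec x_1 \prec x_{n-1}$ and $x_1 \prec x_{n-1} \prec x_n$ (the strictness coming from injectivity of $d(x_0, -)$, resp.\ $d(x_1, -)$, on the relevant interval). Feeding the tuple $(x_0, x_1, x_{n-1}, x_n)$ into the no-4-cuts hypothesis gives $x_{n-1} \in [x_0, x_n]$, and combining $d(x_0, x_n) = d(x_0, x_{n-1}) + d(x_{n-1}, x_n)$ with the inductive hypothesis for $x_0, \ldots, x_{n-1}$ gives the claim.

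Granting the claim, alignedness is elementary distance arithmetic. For the inclusion $\supseteq$: from the claim, $d(x_0, x_n) = d(x_0, x_{i-1}) + d(x_{i-1}, x_i) + d(x_i, x_n)$, so any $p \in [x_{i-1}, x_i]$ satisfies $d(x_0, p) + d(p, x_n) = d(x_0, x_n)$ and lies in $[x_0, x_n]$. For $\subseteq$: given $p \in [x_0, x_n]$, pick $i$ with $d(x_0, x_{i-1}) \le d(x_0, p) \le d(x_0, x_i)$; applying the geodetic dichotomy to $\{x_{i-1}, p\}$ and then to $\{p, x_i\}$ inside $[x_0, x_n]$ gives $x_0 \preq x_{i-1} \preq p$ and $x_0 \preq p \preq x_i$ (the other alternatives force $p = x_{i-1}$ or $p = x_i$, which lie in $[x_{i-1}, x_i]$ anyway), whence $d(x_{i-1}, p) + d(p, x_i) = d(x_0, x_i) - d(x_0, x_{i-1}) = d(x_{i-1}, x_i)$, i.e.\ $p \in [x_{i-1}, x_i]$.

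The main obstacle is the displayed claim: alignedness is a statement about chains of arbitrary length, while the hypotheses constrain only triples (strict betweenness) and one four-point configuration (no 4-cuts), so the argument must bootstrap from these local conditions. The key organizational idea is that once the sub-chains $x_0, \ldots, x_{n-1}$ and $x_1, \ldots, x_n$ are understood by induction, the no-4-cuts condition can be applied to the \emph{spanning} tuple $(x_0, x_1, x_{n-1}, x_n)$ to pull $x_{n-1}$ into $[x_0, x_n]$; getting the strict-betweenness hypotheses of that application to hold is where injectivity of $d(x_0,-)$ on intervals (a consequence of geodeticity) is needed. Everything else is repeated routine use of the identity $d(u, w) = d(u, v) + d(v, w)$ and the geodetic dichotomy.
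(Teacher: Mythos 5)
Your proof is correct, and the reverse direction is organised quite differently from the paper's. For the forward implication, the paper deduces geodeticity from Lemma~\ref{lemma:intervals-embed} (intervals embed isometrically in $\R$), whereas you argue directly from $[a,b] = [a,x] \cup [x,b]$ plus a one-line distance calculation; both are clean and the difference is cosmetic. For the converse, the paper runs an induction directly on the interval identity: it verifies $x_0 \prec x_2 \prec x_3$ via no-4-cuts applied to the \emph{consecutive} quadruple $(x_0, x_1, x_2, x_3)$, invokes the inductive hypothesis on the shortened chain $x_0, x_2, x_3, \ldots, x_n$, and then pastes in $[x_0, x_2] = [x_0, x_1] \cup [x_1, x_2]$ from the $n = 2$ case. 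You instead isolate the distance-additivity claim $d(x_0, x_n) = \sum d(x_{i-1}, x_i)$ as the inductive target, apply no-4-cuts to the \emph{spanning} quadruple $(x_0, x_1, x_{n-1}, x_n)$ (with the strict betweenness hypotheses extracted from the inductive hypothesis on the two sub-chains of length $n$), and only afterwards deduce the interval partition via the geodetic dichotomy. Your decomposition cleanly separates the metric content (the chain is globally geodesic) from the order-theoretic content (every point of $[x_0,x_n]$ falls into one of the sub-intervals); the paper's version is shorter because it never needs to re-derive the interval partition from a distance identity. Both approaches are valid; yours makes the role of geodeticity in the final $\subseteq$ step more explicit, at the cost of an extra layer.
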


\begin{proof}
First suppose that $X$ is aligned. That $X$ is geodetic follows from
Lemma~\ref{lemma:intervals-embed} and the fact that subspaces of $\R$ are
geodetic. To prove that $X$ has no 4-cuts, note that whenever $a \prec x
\prec y$ and $x \prec y \prec b$, alignedness gives $[a,b] = [a,x] \cup
[x,y] \cup [y,b]$, hence $x, y \in [a,b]$.

Conversely, suppose that $X$ is geodetic and has no 4-cuts, and take points 
$x_0, \ldots, x_n$ satisfying $x_{i - 1} \prec x_i \prec x_{i + 1}$
whenever $0 < i < n$. We must show that $[x_0, x_n] = \bigcup_{i = 1}^n
[x_{i - 1}, x_i]$. We use induction on $n$. 

The case $n = 1$ is trivial.  For $n = 2$, let $y \in [x_0,x_2]$. Applying
the definition of geodetic to $y, x_1 \in [x_0, x_2]$ gives $y \in
[x_0,x_1]$ or $y \in [x_1,x_2]$. Hence $[x_0, x_2] \subseteq [x_0, x_1]
\cup [x_1, x_2]$. The opposite inclusion holds in any metric space
(Lemma~4.13 of Leinster and Shulman~\cite{Leinster2021}).

Now let $n \geq 3$. No 4-cuts gives $x_0 \prec x_2 \prec x_3$. Hence the
list of points $x_0, x_2, x_3, \ldots, x_n$ satisfies the conditions of
Definition~\ref{def:aligned}, so by inductive hypothesis,
\[[x_0,x_n] = [x_0,x_2] \cup [x_2,x_3] \cup \cdots \cup [x_{n-1},x_n].\]
Finally, $[x_0,x_2] = [x_0,x_1] \cup [x_1,x_2]$ by the $n = 2$ case,
completing the proof.
\end{proof}

\section{A Carath\'eodory theorem for closed sets}
\label{sec:cara}

The classical Carath\'eodory theorem states that any point in the convex
hull of a subset $X$ of Euclidean space must, in fact, lie in the convex
hull of some affinely independent subset of $X$. (See Theorem~1.1.4
of~\cite{Schneider2013}, for instance.) We will need a strengthening of
this result for \emph{closed} sets. It is very classical in flavour, but we
have been unable to find it in the literature.

\begin{theorem}
\label{thm:closed-cara}
Let $X$ be a closed subset of $\R^N$ and $a \in \conv(X)$. Then there exist
$n \geq 0$ and affinely independent points $x_0, \ldots, x_n \in X$ such
that $a \in \conv\{x_0, \ldots, x_n\}$ and
\begin{equation}
\label{eq:cara-int}
\conv\{x_0, \ldots, x_n\} \cap X = \{x_0, \ldots, x_n\}.
\end{equation}
\end{theorem}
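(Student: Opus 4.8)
The plan is to start from the classical Carathéodory theorem, which already gives us affinely independent points $x_0, \ldots, x_n \in X$ with $a \in \conv\{x_0, \ldots, x_n\}$, and then to improve the configuration until the intersection condition \eqref{eq:cara-int} holds. Concretely, I would consider, among all finite affinely independent subsets $S \subseteq X$ whose convex hull contains $a$, one for which the simplex $\conv(S)$ is \emph{minimal} in an appropriate sense --- for instance, having the fewest vertices, or, failing that, minimal volume within its affine span. The point of minimality is that any ``extra'' point of $X$ lying in $\conv(S)$ beyond the vertices of $S$ should let us replace $S$ by a strictly smaller configuration still containing $a$, contradicting minimality.

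The key step is the replacement move. Suppose $S = \{x_0, \ldots, x_n\}$ is affinely independent, $a \in \conv(S)$, but there is some $y \in (\conv(S) \cap X) \setminus S$. Write $a$ and $y$ in barycentric coordinates with respect to $S$. Since $y$ lies in the simplex, it lies in the convex hull of some face; more usefully, the ray from $y$ (or from $a$) can be pushed to the boundary of the simplex, expressing $a$ as a convex combination involving $y$ and the vertices of a proper face. The standard trick is: because $y \in \conv(S)$, we can write $a$ as a convex combination of $y$ together with at most $n$ of the $x_i$ --- i.e.\ $a \in \conv(\{y\} \cup S')$ for a proper subset $S' \subsetneq S$ --- and then pass to an affinely independent subset of $\{y\} \cup S'$ (again by classical Carathéodory) whose hull contains $a$. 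This new configuration has at most $n$ vertices but possibly still fails \eqref{eq:cara-int}; iterating, and using that the number of vertices strictly decreases or stays bounded, we need an argument that the process terminates.

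Here is where closedness of $X$ enters, and I expect this to be the main obstacle: the naive iteration might cycle or fail to terminate if at each stage a new offending point $y$ appears. To handle this I would argue by a compactness/limiting argument rather than pure combinatorics. Fix the minimal number $n$ of vertices over all affinely independent $S \subseteq X$ with $a \in \conv(S)$. Among all such $S$ with exactly $n$ vertices, consider the infimum of $\Vol_n(\conv(S))$ (the $n$-dimensional volume of the simplex inside its affine hull). If this infimum is attained --- which is where I would use that $X$ is closed, together with a bound confining the relevant vertices to a compact region (since $a$ has a fixed barycentric representation, the vertices cannot all run off to infinity) --- then the minimising simplex $\Delta = \conv\{x_0, \ldots, x_n\}$ must satisfy \eqref{eq:cara-int}: if some $y \in (\Delta \cap X)\setminus S$ existed, the replacement move above would produce either a configuration with fewer than $n$ vertices (contradicting minimality of $n$) or one with $n$ vertices and strictly smaller volume (since replacing a vertex by an interior point of the simplex strictly shrinks volume), contradicting minimality of the volume. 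The care needed is in (a) showing the infimum is attained --- handling the possibility that a minimising sequence degenerates to an affinely \emph{dependent} configuration, which would drop the vertex count and re-invoke minimality of $n$ --- and (b) verifying the volume strictly decreases under the replacement, which requires that $y$ is not merely on a face but that we can choose the replacement to genuinely move a vertex inward. I would also need the edge case $n = 0$ (then $a \in X$ and $\{a\}$ works) and to note $\conv$ of finitely many points is closed so $\conv(S) \cap X$ is itself closed, which makes the ``pick an extreme offending point'' variant viable as an alternative to the volume argument.
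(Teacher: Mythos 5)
Your core idea---minimise the volume of the simplex over all affinely independent configurations in $X$ whose hull contains $a$, using closedness of $X$ to attain the minimum---is exactly the paper's strategy. But two of the technical worries you flag are genuine gaps, and your proposed fixes for them do not work as stated.

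First, the confinement argument. You suggest bounding the vertices because ``$a$ has a fixed barycentric representation,'' so they ``cannot all run off to infinity.'' This is false even with the vertex count $n$ held minimal: a minimising sequence of simplices can have bounded (indeed shrinking) volume while one vertex escapes to infinity and the simplex becomes arbitrarily thin; barycentric coordinates impose no uniform bound on the diameter. The paper avoids this by first reducing to the case $X$ compact: since $a\in\conv(F)$ for some finite $F\subseteq X$, one replaces $X$ by the compact set $\conv(F)\cap X$, and then the set $K=\{(x_0,\dots,x_N)\in X^{N+1}: a\in\conv\{x_0,\dots,x_N\}\}$ is literally compact (as $\pi(\sigma^{-1}(a))$ for continuous $\pi,\sigma$ on the compact $X^{N+1}\times\Delta^N$), so the continuous volume functional attains its minimum with no escape-to-infinity issue to rule out.

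Second, degeneracy and the ``strict decrease.'' You worry, correctly, that the minimising sequence might degenerate to an affinely dependent configuration, and that you need the replacement move to strictly decrease volume. The paper sidesteps both issues at once by inducting on the ambient dimension $N$: if $a$ lies in the convex hull of points of $X$ inside a proper affine subspace $H$, apply the inductive hypothesis to $H\cap X$; otherwise every configuration realising $a$ has full dimension $N$, so one may take exactly $N+1$ vertices and the minimal volume is \emph{strictly positive}. Then, given $x\in\conv\{x_0,\dots,x_N\}\cap X$, after relabelling $a\in\conv\{x,x_1,\dots,x_N\}$, and the chain $\Vol_N(\conv\{x,x_1,\dots,x_N\})\ge\Vol_N(\conv\{x_0,\dots,x_N\})$ together with $\conv\{x,x_1,\dots,x_N\}\subseteq\conv\{x_0,\dots,x_N\}$ forces equality of volumes and hence of hulls (using positivity of the volume); then $x_0$ lies in the smaller hull, and affine independence gives $x=x_0$. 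No strict-decrease argument is needed. These two reductions---to compact $X$ and to full-dimensional configurations---are what your sketch is missing and are where the real content of the proof lives.
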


The condition that $X$ is closed cannot be dropped: consider $X = (-2, -1)
\cup (1, 2) \sub \R$ and $a = 0$.

\begin{proof}
First assume that $X$ is compact. The result is trivial when $N = 0$, so
let $N \geq 1$ and assume inductively that it holds for $N - 1$. If $a$ can
be expressed as a convex combination of elements of $X$ that all lie in
some proper affine subspace $H$ of $\R^N$, we can apply the inductive
hypothesis to $H \cap X$ and the result is proved. Assuming otherwise,
Carath\'eodory's theorem implies that $a$ is in the convex hull of some
affinely independent subset of $X$, which must then have $N + 1$ elements.

Now write 
\[
\Delta^N = 
\Bigl\{
(p_0, \ldots, p_N) \in \R^{N + 1} : p_i \geq 0, \sum p_i = 1
\Bigr\},
\]
and consider the maps
\[
\begin{array}{ccccc}
X^{N + 1}       &\xleftarrow{\ \pi\ }   &
X^{N + 1} \times \Delta^N       &
\xrightarrow{\ \sigma\ }        &
\R^N    \\
(x_0, \ldots, x_N)      &\longmapsfrom  &
(x_0, \ldots, x_N, p_0, \ldots, p_N)    &
\longmapsto     &
\sum_{i = 0}^N p_i x_i.
\end{array}
\]
Both maps are continuous and $X^{N + 1} \times \Delta^N$ is compact, so the
set
\[
K = 
\bigl\{
(x_0, \ldots, x_N) \in X^{N + 1}: a \in \conv\{x_0, \ldots, x_N\}
\bigr\}
=
\pi(\sigma^{-1}(a))
\]
is compact, as well as nonempty. Moreover, the map
\[
\begin{array}{ccc}
(\R^N)^{N + 1}          &\to        &
\R     \\
(z_0, \ldots, z_N)      &\mapsto    &
\Vol_N (\conv\{z_0, \ldots, z_N\})
\end{array}
\]
is continuous, where $\Vol_N$ denotes $N$-dimensional volume. Hence
$\Vol_N(\conv\{x_0, \ldots, x_N\})$ attains a minimum at some point
$(x_0, \ldots, x_N)$ of $K$. Since $a \in \conv\{x_0, \ldots, x_N\}$, the
points $x_0, \ldots, x_N$ do not lie in any proper affine subspace of
$\R^N$, so $\Vol_N(\conv\{x_0, \ldots, x_N\}) > 0$. 

We now prove equation~\eqref{eq:cara-int} for $x_0, \ldots, x_N$.
Certainly $\{x_0, \ldots, x_N\} \sub \conv\{x_0, \ldots, x_N\} \cap
X$. Conversely, let $x \in \conv\{x_0, \ldots, x_N\} \cap X$. We have
\[
\conv\{x_0, \ldots, x_N\}
=
\bigcup_{i = 0}^N 
\conv\{x_0, \ldots, x_{i - 1}, x, x_{i + 1}, \ldots, x_N\},
\]
so without loss of generality, $a \in \conv\{x, x_1, \ldots, x_N\}$. Then
$(x, x_1, \ldots, x_N) \in K$, so
\begin{equation}
\label{eq:cara-ineq}
\Vol_N(\conv\{x, x_1, \ldots, x_N\})
\geq
\Vol_N(\conv\{x_0, x_1, \ldots, x_N\})
\end{equation}
by minimality. But since $x \in \conv\{x_0, \ldots, x_N\}$,
\begin{equation}
\label{eq:cara-inc}
\conv\{x, x_1, \ldots, x_N\} \sub \conv\{x_0, x_1, \ldots, x_N\},
\end{equation}
so equality holds in~\eqref{eq:cara-ineq}. This and the fact that
$\Vol_N(\conv\{x_0, \ldots, x_N\}) > 0$ together imply that equality holds
in~\eqref{eq:cara-inc}. In particular, $x_0 \in \conv\{x, x_1, \ldots,
x_N\}$, and a short calculation using affine independence of $x_0, x_1,
\ldots, x_N$ then yields $x = x_0$. Hence $x \in \{x_0, \ldots, x_N\}$, as
required. 

This proves the theorem when $X$ is compact. Now consider the general case
of a closed set $X$. Since $a \in \conv(X)$, we have $a \in \conv(F)$ for
some finite $F \sub X$. Then $\conv(F)$ is compact, so $X' = \conv(F) \cap
X$ is compact with $a \in \conv(X')$. So by the compact case, there are
affinely independent points $x_0, \ldots, x_n \in X'$ such that $a \in
\conv\{x_0, \ldots, x_n\}$ and $\conv\{x_0, \ldots, x_n\} \cap X' = \{x_0,
\ldots, x_n\}$. Now $x_0, \ldots, x_n \in X' \sub \conv(F)$, so
$\conv\{x_0, \ldots, x_n\} \sub \conv(F)$, giving
\[
\conv\{x_0, \ldots, x_n\} \cap X
=
\conv\{x_0, \ldots, x_n\} \cap X'
=
\{x_0, \ldots, x_n\}.
\qedhere
\]
\end{proof}

\section{Inner boundaries}
\label{sec:ib}

Our main theorem relates magnitude homology equivalence to two
concrete geometric constructions: the inner boundary and the core. We
introduce them in the next two sections. 

\begin{definition}
   Two points of a metric space are \demph{adjacent} if they are distinct
   and there is no point strictly between them.
\end{definition}

For example, when a (connected) graph is viewed as a metric space, two
vertices are adjacent in this sense if and only if there is an edge between
them. 

\begin{definition}\label{def:inner_boundary}
    The \demph{inner boundary} $\rho X$ of a metric space $X$ is the subset
    \[\rho X = \{ x \in X : x \text{ is adjacent to some point in } X \}.\]
\end{definition}

Figure~\ref{fig:inner_boundary}(a) and the following examples shed light
on the choice of terminology.

\begin{examples}\label{egs:boundary}
    \begin{enumerate}
    \item 
    \label{eg:bdy-annulus}
    For a closed annulus in $\R^2$, the inner boundary is the inner
    bounding circle.

    \item 
    \label{eg:bdy-removed}
    Similarly, when $X$ is $\R^2$ with several disjoint open discs
    removed, the inner boundary is the union of their bounding circles.

    \item 
    \label{eg:bdy-finite}
    The inner boundary of a finite metric space $X$ with more than
    one point is $X$ itself. 
    
    \item 
    \label{eg:bdy-L}
    Let $X$ be the \textsf{L}-shaped space of Remark~\ref{rmk:L}. Then
    $\rho X = X \setminus \{(0,0)\}$. This shows that $\rho X$ need not be
    closed in $X$.
    
    \item 
    \label{eg:bdy-menger}
    A metric space $X$ is said to be \demph{Menger convex} if $\rho X
    = \emptyset$. Every convex or open subset of $\R^N$ is Menger
    convex. For closed sets in $\R^N$, Menger convexity is equivalent to
    convexity in the usual sense (Theorem~2.6.2 of~\cite{Papa}). 
    \end{enumerate}
\end{examples}

\begin{remark}
\label{rmk:bdys}
For $X \subseteq \R^N$, the inner boundary $\rho X$ is a subset of the
topological boundary $\partial X$. But whereas $\partial$ is defined for
subspaces of a topological space, $\rho$ is defined for abstract metric
spaces. Another difference between $\rho$ and $\partial$ is that $\rho$ is
idempotent: $\rho \rho X = \rho X$ for all metric spaces $X$, as is easily
shown.
\end{remark}

\begin{proposition}
\label{propn:conv-union}
    Let $X$ be a closed subset of $\R^N$. Then 
    \[\conv(X) = X \cup \conv(\rho X) \quad \text{and} \quad \cconv(X) = X
    \cup \cconv(\rho X).\] 
\end{proposition}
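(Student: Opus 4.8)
The plan is to derive both equalities from the closed Carath\'eodory theorem (Theorem~\ref{thm:closed-cara}), obtaining the statement about $\cconv$ from the one about $\conv$ by a routine closure argument.

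First I would dispatch the easy inclusions. Since $\rho X \sub X \sub \conv(X)$ and $\conv(X)$ is convex, $\conv(\rho X) \sub \conv(X)$, so $X \cup \conv(\rho X) \sub \conv(X)$; the same reasoning with $\cconv(X)$ in place of $\conv(X)$, using that $\cconv(X)$ is closed and convex, gives $X \cup \cconv(\rho X) \sub \cconv(X)$.

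The substance is the reverse inclusion $\conv(X) \sub X \cup \conv(\rho X)$. Take $a \in \conv(X)$. By Theorem~\ref{thm:closed-cara} there are $n \geq 0$ and affinely independent $x_0, \ldots, x_n \in X$ with $a \in \conv\{x_0, \ldots, x_n\}$ and $\conv\{x_0, \ldots, x_n\} \cap X = \{x_0, \ldots, x_n\}$. If $n = 0$ then $a = x_0 \in X$, and we are done. If $n \geq 1$, I claim every $x_i$ lies in $\rho X$, which forces $a \in \conv\{x_0, \ldots, x_n\} \sub \conv(\rho X)$. To see the claim, fix $i \neq j$ and show that $x_i$ and $x_j$ are adjacent in $X$: they are distinct by affine independence, and any point $p$ of $\R^N$ strictly between them lies on the open Euclidean segment from $x_i$ to $x_j$, hence in $\conv\{x_0, \ldots, x_n\}$; were $p$ also in $X$, then $p \in \{x_0, \ldots, x_n\}$ by~\eqref{eq:cara-int}, and since $p \neq x_i, x_j$ this exhibits a point of the set $\{x_0, \ldots, x_n\}$ as a nontrivial affine combination of two of the others, contradicting affine independence. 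So no point of $X$ is strictly between $x_i$ and $x_j$, i.e. they are adjacent, and $x_i \in \rho X$. As $n \geq 1$, every index has a partner $j \neq i$, so all the $x_i$ lie in $\rho X$, as claimed.

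Finally I would pass to closures. In $\R^N$ the closed convex hull of a set is the closure of its convex hull (Remark~\ref{rmk:L}), closure commutes with finite unions, $X$ is closed, and $\overline{\conv(\rho X)} = \cconv(\rho X)$; hence $\cconv(X) = \overline{\conv(X)} = \overline{X \cup \conv(\rho X)} = X \cup \cconv(\rho X)$. I do not expect a serious obstacle: essentially all the difficulty is already packaged into Theorem~\ref{thm:closed-cara}, and the only delicate point is the affine-independence argument certifying that the Carath\'eodory points lie in the inner boundary.
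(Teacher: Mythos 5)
Your proof is correct and follows essentially the same route as the paper: both inclusions via the closed Carath\'eodory theorem (Theorem~\ref{thm:closed-cara}), then adjacency of the $x_i$ from the intersection property~\eqref{eq:cara-int}, then the $\cconv$ equation by taking closures. The only difference is that you spell out the affine-independence argument behind $[x_i,x_j]\cap X=\{x_i,x_j\}$, which the paper leaves implicit.
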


\begin{proof}
The second equation follows from the first by taking closures, and one
inclusion of the first equation is immediate. It remains to prove that
every point $a$ of $\conv(X)$ is in $X \cup \conv(\rho X)$.

By Theorem~\ref{thm:closed-cara} (closed Carath\'eodory), there exist $n
\geq 0$ and affinely independent points $x_0, \ldots, x_n \in X$ such that
$a \in \conv\{x_0, \ldots, x_n\}$ and
\[
\conv\{x_0, \ldots, x_n\} \cap X = \{x_0, \ldots, x_n\}.
\]
Whenever $i \neq j$, we have $[x_i, x_j] \cap X = \{x_i, x_j\}$, so
$x_i$ is adjacent to $x_j$. Hence if $n \geq 1$ then $x_0, \ldots, x_n \in
\rho X$ and $a \in \conv(\rho X)$, while if $n = 0$ then $a = x_0 \in X$.
\end{proof}

The inner boundary construction is not functorial in the obvious sense. For
example, whenever $Y$ is a closed convex subset of $\R^N$ and $X$ is a
closed but nonconvex subset of $Y$, we have $\rho X \neq \emptyset = \rho
Y$ (by Example~\ref{egs:boundary}\bref{eg:bdy-menger}), so the inclusion $X
\hookrightarrow Y$ cannot induce a map $\rho X \to \rho Y$. However, we now
state conditions under which a map $f: X \to Y$ does restrict to a map
$\rho X \to \rho Y$.

\begin{lemma}\label{lem:restrict_inner_boundary}
    Let $\oppair{X}{Y}{f}{g}$ be maps of metric spaces.
    \begin{enumerate}
        \item \label{part:rib-gf} If $gf(x) = x$ for all $x \in \rho X$,
        then $f$ sends adjacent points of $X$ to adjacent points of $Y$
        and $f(\rho X) \sub \rho Y$;

        \item \label{part:rib-fg} If also $fg(y) = y$ for all $y \in \rho
        Y$, then $f$ and $g$ 
        restrict to mutually inverse isometries $\oppairu{\rho X}{\rho Y}$.
    \end{enumerate}
\end{lemma}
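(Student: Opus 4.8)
The plan is to prove the two parts in sequence, with part~\ref{part:rib-fg} building on part~\ref{part:rib-gf}.

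\textit{Part \ref{part:rib-gf}.} First I would show that $f$ preserves adjacency. Suppose $x, x' \in X$ are adjacent. They are distinct, and since $d(f(x), f(x')) \leq d(x, x')$, I claim $f(x) \neq f(x')$: indeed, $gf(x) = x$ and $gf(x') = x'$ (as $x, x'$, being adjacent, lie in $\rho X$), so if $f(x) = f(x')$ then applying $g$ gives $x = x'$, a contradiction. Now suppose some point $q$ lies strictly between $f(x)$ and $f(x')$, i.e. $d(f(x), q) + d(q, f(x')) = d(f(x), f(x'))$ with $q \neq f(x), f(x')$. Apply $g$: since $g$ is distance-decreasing, $d(x, g(q)) + d(g(q), x') \leq d(f(x), q) + d(q, f(x')) = d(f(x), f(x')) \leq d(x, x')$, and the reverse inequality $d(x,x') \le d(x,g(q)) + d(g(q),x')$ is the triangle inequality, so $g(q)$ lies between $x$ and $x'$. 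By adjacency of $x$ and $x'$, either $g(q) = x = gf(x)$ or $g(q) = x' = gf(x')$. In the first case, $d(f(x), q) \leq d(x, g(q)) = 0$ forces $q = f(x)$; the second case similarly forces $q = f(x')$ — either way contradicting $q \neq f(x), f(x')$. Hence $f(x)$ and $f(x')$ are adjacent. The inclusion $f(\rho X) \sub \rho Y$ is then immediate: if $x \in \rho X$ it is adjacent to some $x' \in X$, so $f(x)$ is adjacent to $f(x')$, hence $f(x) \in \rho Y$.

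\textit{Part \ref{part:rib-fg}.} By part~\ref{part:rib-gf} applied to $f$ (using $gf = \mathrm{id}$ on $\rho X$), we get $f(\rho X) \sub \rho Y$; by part~\ref{part:rib-gf} applied to $g$ with the roles swapped (using $fg = \mathrm{id}$ on $\rho Y$), we get $g(\rho Y) \sub \rho X$. So $f$ and $g$ restrict to maps $\oppairu{\rho X}{\rho Y}$ which, by hypothesis, are mutually inverse as functions. It remains to check they are isometries. For $x, x' \in \rho X$: on one hand $d(f(x), f(x')) \leq d(x, x')$ since $f$ is a map; on the other, $d(x, x') = d(gf(x), gf(x')) \leq d(f(x), f(x'))$ since $g$ is a map and $gf$ fixes $\rho X$. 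Hence $d(f(x), f(x')) = d(x, x')$, so the restriction of $f$ is an isometry, and symmetrically for $g$.

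I do not expect a serious obstacle here; the one point requiring a little care is the adjacency-preservation argument in part~\ref{part:rib-gf}, specifically the step where a point $q$ strictly between $f(x)$ and $f(x')$ is pulled back by $g$ to a point between $x$ and $x'$ and then shown to coincide with $f(x)$ or $f(x')$ — one must use that $g$ is distance-decreasing to control $d(f(x), q)$ by $d(x, g(q)) = 0$. Everything else is a routine combination of the triangle inequality with the distance-decreasing property and the fixed-point hypotheses.
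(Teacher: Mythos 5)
Your proof takes essentially the same route as the paper's: in part~\bref{part:rib-gf} you set up the squeeze
\[
d(x, x') \leq d(x, g(q)) + d(g(q), x') \leq d(f(x), q) + d(q, f(x')) = d(f(x), f(x')) \leq d(x, x'),
\]
conclude that $g(q) \in [x, x']$, use adjacency to force $g(q) \in \{x, x'\}$, and then push back to conclude $q \in \{f(x), f(x')\}$; and part~\bref{part:rib-fg} is the same bootstrap (with the isometry check made explicit, which the paper leaves implicit). There is, however, one small but real slip in the final step of part~\bref{part:rib-gf}: you write ``$d(f(x), q) \leq d(x, g(q)) = 0$'' and, in your closing remark, attribute this inequality to the distance-decreasing property of $g$. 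That property actually gives the \emph{opposite} direction, namely $d(x, g(q)) = d(gf(x), g(q)) \leq d(f(x), q)$, so the inequality as written is unjustified. The correct justification is the one the paper uses: since the squeeze above is an equality throughout, and since the middle step is the sum of the two componentwise inequalities $d(x, g(q)) \leq d(f(x), q)$ and $d(g(q), x') \leq d(q, f(x'))$ (each from $g$ being distance-decreasing), each of those must be an \emph{equality}. Then $g(q) = x$ gives $d(f(x), q) = d(x, g(q)) = 0$, hence $q = f(x)$. Your argument thus needs the equality you already established, not a fresh inequality from the distance-decreasing property of $g$; once that is fixed, the proof is correct.
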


\begin{proof}
For~\bref{part:rib-gf}, suppose that $gf(x) = x$ for all $x \in \rho X$,
and let $x, x' \in X$ be adjacent. We must show that $f(x), f(x') \in Y$
are adjacent. Since $x, x'$ are in $\rho X$, they are fixed by $gf$. Since
$x \neq x'$, it follows that $f(x) \neq f(x')$. (Recall that distinctness
is part of the definition of adjacency.) Now let $y \in [f(x), f(x')]$. By
the triangle inequality, the fact that $gf$ fixes $x$ and $x'$, and
the shortness of $f$ and $g$,
\begin{align*}
d(x, x')        &
\leq
d(x, g(y)) + d(g(y), x')        
=
d(gf(x), g(y)) + d(g(y), gf(x'))        \\
&
\leq
d(f(x), y) + d(y, f(x'))        
=
d(f(x), f(x'))  \\
&
\leq 
d(x, x'),
\end{align*}
so equality holds throughout. Hence 
\[
g(y) \in [x, x'],
\quad
d(x, g(y)) = d(f(x), y),
\quad
d(g(y), x') = d(y, f(x')).
\]
Since $x$ and $x'$ are adjacent, the first statement gives $g(y) \in \{x,
x'\}$, then the others give $y \in \{f(x), f(x')\}$. Hence $f(x)$
and $f(x')$ are adjacent. It follows that $f(\rho X) \sub \rho Y$.

For~\bref{part:rib-fg}, it suffices to note that, by~\bref{part:rib-gf},
$f(\rho X) \subseteq \rho Y$ and $g(\rho Y) \subseteq \rho X$. 
\end{proof}

The rest of this section describes the interactions between the concepts of
inner boundary and convex hull. It is needed for the proof of
Proposition~\ref{prop:core-closure}, but not for the main theorem.

The first two results address the question of whether the convex hull of
points in a set crosses its inner boundary
(Figure~\ref{fig:boundary-crossing}(a,~b)).

\begin{figure}
\begin{center}
\setlength{\unitlength}{1mm}
\begin{picture}(126,50)
\cell{5}{39}{r}{(i)}
\cell{5}{16}{r}{(ii)}
\cell{25}{0}{b}{(a)}
\cell{25}{39}{c}{\includegraphics[width=15\unitlength]{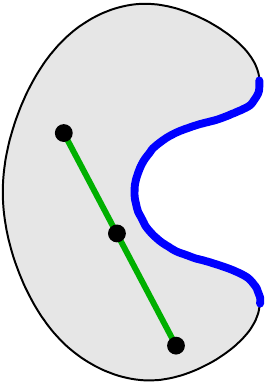}}
\cell{19.5}{42.2}{c}{$x$}
\cell{22}{36.2}{c}{$y$}
\cell{25.5}{30.5}{c}{$z$}
\cell{25}{16}{c}{\includegraphics[width=15\unitlength]{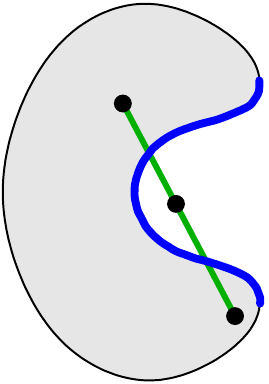}}
\cell{22.5}{21}{c}{$x$}
\cell{29.5}{15}{c}{$y$}
\cell{28.5}{9}{c}{$z$}
\cell{64}{0}{b}{(b)}
\cell{64}{39}{c}{\includegraphics[width=15\unitlength]{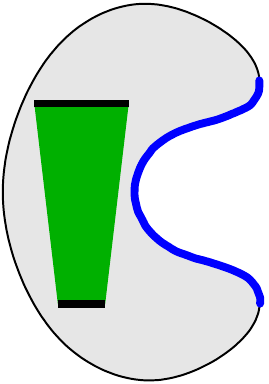}}
\cell{61}{46}{c}{$A$}
\cell{67}{38}{l}{$\conv(A)$}
\put(66.5,38.5){\line(-4,1){5}}
\cell{64}{16}{c}{\includegraphics[width=15\unitlength]{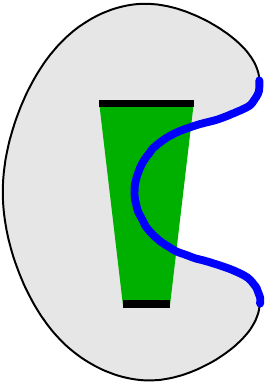}}
\cell{65}{23}{c}{$A$}
\cell{67}{16}{l}{$\conv(A)$}
\cell{103}{0}{b}{(c)}
\cell{103}{39}{c}{\includegraphics[width=15\unitlength]{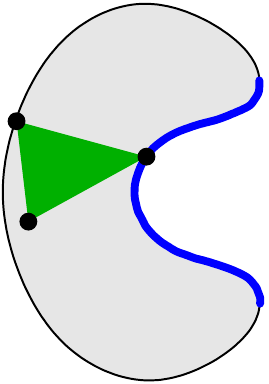}}
\cell{106.5}{40}{c}{$x_0$}
\cell{94}{44}{c}{$x_1$}
\cell{98.5}{35}{c}{$x_2$}
\cell{103}{16}{c}{\includegraphics[width=15\unitlength]{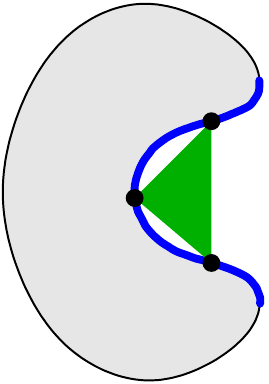}}
\cell{107}{22}{c}{$x_0$}
\cell{100.5}{15}{c}{$x_1$}
\cell{107}{10}{c}{$x_2$}
\end{picture}
\end{center}
\caption{The two cases of
(a)~Lemma~\ref{lem:boundary_in_interval},
(b)~Lemma~\ref{lem:hull_contained}, and
(c)~Proposition~\ref{propn:dichotomy}. Here $X$ is a closed
subset of $E = \R^2$, with inner boundary shown in thick blue. In~(b),
the subset $A$ of $X$ is the union of the two parallel line segments.} 
\label{fig:boundary-crossing}
\end{figure}

\begin{lemma}\label{lem:boundary_in_interval}
    Let $E$ be an aligned metric space in which every closed interval $[a,
    b]$ is compact. Let $X$ be a closed subset of $E$, let $x, z \in X$,
    and let $y \in [x, z]$. Then either (i)~$y \in X$, or (ii)~$[x,y)$ and
    $(y,z]$ both intersect $\rho X$. 
\end{lemma}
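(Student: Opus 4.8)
The plan is to prove the statement in the form: if $y \notin X$ then both $[x,y)$ and $(y,z]$ meet $\rho X$ (if $y \in X$, case~(i) holds and there is nothing to do). So assume $y \notin X$; since $x, z \in X$ this forces $x \neq y \neq z$. The main tool is Lemma~\ref{lemma:intervals-embed}: the function $d(x, -)$ embeds $[x, z]$ isometrically into $\R$, which both linearly orders $[x, z]$ and lets me transport the whole problem onto the real line. Under this embedding $x \mapsto 0$, $z \mapsto d(x, z)$, and $y \mapsto t_0 := d(x, y)$, with $0 < t_0 < d(x, z)$ (strict, because $y \notin \{x, z\}$). Moreover $C := X \cap [x, z]$ is a closed subset of the compact set $[x, z]$ (compact by hypothesis on $E$), so its image under $d(x, -)$ is a compact subset of $[0, d(x, z)]$ that contains $0$ and $d(x, z)$ (the images of $x$ and $z$) but not $t_0$.

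Hence $t_0$ lies in a \emph{bounded} gap of this compact set. Concretely, choose $w \in C$ with $d(x, w) = \max\{ d(x, p) : p \in C,\ d(x, p) \le t_0 \}$ and $w' \in C$ with $d(x, w') = \min\{ d(x, p) : p \in C,\ d(x, p) \ge t_0 \}$; these exist by compactness, lie in $X$, satisfy $d(x, w) < t_0 < d(x, w')$, and no point of $C$ has $d(x, -)$-value strictly between $d(x, w)$ and $d(x, w')$. A short computation of distances via Lemma~\ref{lemma:intervals-embed} gives $x \preceq w \preceq y$ and $y \preceq w' \preceq z$, so $w \in [x, y)$ and $w' \in (y, z]$.

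It remains to check that $w$ and $w'$ are adjacent; once this is known, $w$ and $w'$ each lie in $\rho X$ (being adjacent to the other, which is a point of $X$), and so $w \in [x,y) \cap \rho X$ and $w' \in (y,z] \cap \rho X$, proving~(ii). They are distinct since $d(x,w) < d(x,w')$. Suppose $p$ is strictly between $w$ and $w'$. By Lemma~\ref{lemma:int-sub}, $p \in [w, w'] \sub [x, z]$, and then Lemma~\ref{lemma:intervals-embed} forces $d(x, p)$ to lie strictly between $d(x, w)$ and $d(x, w')$; since the image of $C$ avoids that open interval while $p \in [x,z]$, we conclude $p \notin X$. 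Thus no point of $X$ is strictly between $w$ and $w'$, so they are adjacent.

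I do not expect a real obstacle here: everything reduces, through Lemmas~\ref{lemma:int-sub} and~\ref{lemma:intervals-embed}, to the elementary fact that a point missing from a compact subset of a bounded real interval sits in a bounded complementary gap with attained endpoints. The points needing care are (a) verifying the gap is genuinely bounded --- i.e. that $0 < t_0 < d(x,z)$ and that $0$ and $d(x,z)$ are in the image of $C$ --- which is precisely where $x, z \in X$ and $y \notin X$ are used, and (b) noting that it is compactness of $[x,z]$, not merely closedness, that makes the extremal points $w, w'$ exist.
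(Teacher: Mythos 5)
Your proof is correct and takes essentially the same route as the paper's. The paper picks $y_0 \in [x,y]\cap X$ maximising $d(x,y_0)$ and $y_1 \in [y,z]\cap X$ maximising $d(y_1,z)$, which are exactly your $w$ and $w'$; both arguments then deduce adjacency of the two extremal points from maximality (you phrase it via the $\R$-embedding and Lemma~\ref{lemma:int-sub}, the paper via the alignedness identity $[y_0,y_1] = [y_0,y]\cup[y,y_1]$, but these amount to the same use of Lemma~\ref{lemma:intervals-embed}).
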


\begin{proof}
By hypothesis, $[x,y]$ is compact, and so is $[x,y] \cap X$ because $X$ is
closed; it is also nonempty. Hence it contains a point $y_0 \in [x,y] \cap
X$ that maximises $d(x,y_0)$. Similarly, let $y_1 \in [y,z] \cap X$
maximise $d(y_1, z)$. Assuming that $y \notin X$, we have $y_0 \prec y
\prec y_1$ by Lemma~\ref{lemma:intervals-embed}.

We claim that $y_0$ is adjacent to $y_1$ in $X$. By alignedness and
Lemma~\ref{lemma:intervals-embed},
\[
[y_0,y] \cup [y,y_1] = [y_0, y_1] \sub [x, y],
\]
But $[y_0, y]$ and $[y, y_1]$ can only intersect $X$ at $y_0$ and $y_1$, by
maximality. Hence $[y_0, y_1] \cap X = \{y_0, y_1\}$, as required.
\end{proof}

\begin{remark}
The closed intervals $[a, b]$ in an aligned space need not be compact;
consider $[-1, 1]$ in $\R \setminus \{0\}$, for instance. But
Lemma~\ref{lemma:intervals-embed} implies that intervals $[a, b]$ in an
aligned space $E$ are totally bounded, so they are compact if $E$ is
complete.
\end{remark}

\begin{lemma}\label{lem:hull_contained}
    Let $E$ be an aligned metric space in which every closed interval $[a,
    b]$ is compact. Let $X$ be a closed subset of $E$ and $A \sub X$.
    Then either (i)~$\conv(A) \sub X$, or (ii)~$\conv(A)$ intersects $\rho X$.
\end{lemma}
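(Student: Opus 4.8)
The plan is to prove the dichotomy directly, in the form: assuming alternative~(ii) fails, that is, $\conv(A) \cap \rho X = \emptyset$, I will deduce alternative~(i), namely $\conv(A) \sub X$. All the geometric content is already packaged in Lemma~\ref{lem:boundary_in_interval}, so the argument is essentially an induction that feeds that lemma to itself.

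By Remark~\ref{rmk:ic}, $\conv(A) = \bigcup_{n \geq 0} \intc^n(A)$, so it suffices to show by induction on $n$ that $\intc^n(A) \sub X$. The base case $n = 0$ is immediate, since $\intc^0(A) = A \sub X$. For the inductive step, suppose $\intc^n(A) \sub X$ and take $w \in \intc^{n+1}(A) = \intc(\intc^n(A))$, so that $w \in [x, z]$ for some $x, z \in \intc^n(A)$. By the inductive hypothesis $x, z \in X$, so Lemma~\ref{lem:boundary_in_interval} applies to $w \in [x, z]$: either $w \in X$, in which case we are done, or $[x, w)$ and $(w, z]$ both intersect $\rho X$. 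In the latter case, note that $x, z \in \conv(A)$ and $\conv(A)$ is convex, so $[x, z] \sub \conv(A)$; hence $\conv(A)$ meets $\rho X$, contradicting our standing assumption. Therefore $w \in X$, which completes the induction and hence the proof.

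There is no serious obstacle here; the only point requiring a little care is the bookkeeping that makes the induction go through: the new points appearing in $\intc^{n+1}(A)$ all lie on closed intervals whose endpoints are already known (inductively) to lie in $X$, so that Lemma~\ref{lem:boundary_in_interval} is legitimately applicable, and those intervals lie inside $\conv(A)$, so that an intersection with $\rho X$ along such an interval really does witness $\conv(A) \cap \rho X \neq \emptyset$. The hypotheses that $E$ is aligned and that its closed intervals are compact are used only via Lemma~\ref{lem:boundary_in_interval}.
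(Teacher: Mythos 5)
Your proposal is correct and takes essentially the same approach as the paper: both reduce to showing $\intc^n(A) \sub X$ by induction via Remark~\ref{rmk:ic} and feed each inductive step through Lemma~\ref{lem:boundary_in_interval}, using the convexity of $\conv(A)$ to rule out the second alternative. The only difference is cosmetic: you make the appeal to the inductive hypothesis explicit and phrase the exclusion of case~(ii) as a contradiction, whereas the paper observes directly that $[x,y] \cap \rho X = \emptyset$.
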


Here, $\conv(A)$ denotes the convex hull in $E$ (not $X$). Generally, we
use $\conv$ to mean the convex hull in the largest space involved, which is
typically $\R^N$.

\begin{proof}
Suppose that $\conv(A) \cap \rho X = \emptyset$. By Remark~\ref{rmk:ic}, it
is enough to prove that $\intc^n(A) \sub X$ for all $n \geq 0$. For $n =
0$, this is trivial. Let $n \geq 1$. For all $x, y \in \intc^{n - 1}(A)$,
we have $[x, y] \sub \intc^n(A) \sub \conv(A)$; but $\conv(A) \cap \rho X =
\emptyset$, so $[x, y] \cap \rho X = \emptyset$. It follows from
Lemma~\ref{lem:boundary_in_interval} that $[x, y] \sub X$ for all $x, y \in
\intc^{n - 1}(A)$. Thus, $\intc^n(A) \sub X$, as required.
\end{proof}

The third and final result on convex hulls and inner boundaries is specific
to Euclidean space, and describes a dichotomy
(Figure~\ref{fig:boundary-crossing}(c)).

\begin{proposition}
\label{propn:dichotomy}
Let $X$ be a closed subset of $\R^N$, and let $x_0, \ldots, x_n \in X$ be
affinely independent points such that
\begin{equation}
\label{eq:dich-sub}
\conv\{x_0, \ldots, x_n\} \cap \rho X 
\sub
\{x_0, \ldots, x_n\}.
\end{equation}
Then either (i)~$\conv\{x_0, \ldots, x_n\} \sub X$, or (ii)~$\conv\{x_0,
\ldots, x_n\} \cap X = \{x_0, \ldots, x_n\}$.
\end{proposition}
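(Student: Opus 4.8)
The plan is to reduce this to Lemma~\ref{lem:hull_contained} applied to the set $A = \{x_0, \ldots, x_n\}$, using the Carath\'eodory-type structure of $\R^N$ together with hypothesis~\eqref{eq:dich-sub} to upgrade the conclusion from ``$\conv(A)$ meets $\rho X$'' to the sharp dichotomy. First I would invoke Lemma~\ref{lem:hull_contained} with $E = \R^N$ (every closed interval in $\R^N$ is compact), which immediately gives case~(i), namely $\conv\{x_0, \ldots, x_n\} \sub X$, or else that $\conv\{x_0,\ldots,x_n\}$ intersects $\rho X$. So suppose we are in the second situation; I must show $\conv\{x_0, \ldots, x_n\} \cap X = \{x_0, \ldots, x_n\}$. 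One inclusion is trivial since each $x_i \in X$.

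For the reverse inclusion, take $x \in \conv\{x_0, \ldots, x_n\} \cap X$ and suppose for contradiction that $x \notin \{x_0, \ldots, x_n\}$. The idea is to use $x$ to shrink the simplex: since $x \in \conv\{x_0, \ldots, x_n\}$, we have the decomposition
\[
\conv\{x_0, \ldots, x_n\} = \bigcup_{i = 0}^n \conv\{x_0, \ldots, x_{i-1}, x, x_{i+1}, \ldots, x_n\},
\]
and the point of $\rho X$ guaranteed by Lemma~\ref{lem:hull_contained} lies in one of these sub-simplices, say (reindexing) $\conv\{x, x_1, \ldots, x_n\}$. Now I would like to iterate: the replacement list $x, x_1, \ldots, x_n$ still consists of points of $X$, and if it is affinely independent and still satisfies the analogue of~\eqref{eq:dich-sub}, I can feed it back into Lemma~\ref{lem:hull_contained}. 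The hypothesis~\eqref{eq:dich-sub} does carry over, because $\conv\{x, x_1, \ldots, x_n\} \sub \conv\{x_0, \ldots, x_n\}$, so any point of $\rho X$ in the smaller simplex lies in $\{x_0, \ldots, x_n\} \cap \conv\{x, x_1, \ldots, x_n\}$; an affine-independence computation (using that $x_0$ is the unique vertex dropped, and $x \ne x_0$ forces $x_0 \notin \conv\{x, x_1, \ldots, x_n\}$ unless a barycentric-coordinate degeneracy occurs) pins this intersection down to lie inside $\{x, x_1, \ldots, x_n\}$.

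The main obstacle is controlling this descent and showing it must terminate. Affine independence of $x, x_1, \ldots, x_n$ is not automatic from that of $x_0, \ldots, x_n$ — it could fail if $x$ lies on the affine hull of $x_1, \ldots, x_n$, i.e.\ on the facet opposite $x_0$. If that happens, though, $\conv\{x, x_1, \ldots, x_n\}$ is lower-dimensional, and in fact $x \in \conv\{x_1, \ldots, x_n\}$ already, so one can drop $x_0$ entirely and restart with the shorter affinely independent list $x_1, \ldots, x_n$ (whose simplex still satisfies the hypothesis and still contains a point of $\rho X$, or else is contained in $X$); since $x \ne x_0$ this is genuine progress. In the affinely-independent case, the natural termination measure is $N$-dimensional volume: $\Vol_N(\conv\{x, x_1, \ldots, x_n\}) < \Vol_N(\conv\{x_0, x_1, \ldots, x_n\})$ strictly, because $x$ lies in the simplex but is not the vertex $x_0$ and is not on the opposite facet. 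A strictly decreasing sequence of volumes of simplices with vertices in the compact set $\conv\{x_0,\ldots,x_n\} \cap X$ cannot recur indefinitely with the same combinatorial pattern, and at each stage either case~(i) of Lemma~\ref{lem:hull_contained} kicks in (giving the sub-simplex inside $X$, which I then need to push back up) or we strictly descend; a clean way to finish is to choose, among all affinely independent tuples in $X$ whose simplex contains a point of $\rho X$ and satisfies~\eqref{eq:dich-sub} and is contained in $\conv\{x_0,\ldots,x_n\}$, one of minimal volume, and derive a contradiction exactly as in the proof of Theorem~\ref{thm:closed-cara}: any extra point $x$ of $X$ in that minimal simplex would produce a strictly smaller one, forcing $x$ to be a vertex. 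This minimality packaging avoids an explicit induction and is where the real work lies; the barycentric-coordinate calculation identifying $x$ with a vertex is routine and parallels the one already carried out for the closed Carath\'eodory theorem.
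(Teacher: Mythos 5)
Your approach and the paper's prove opposite contrapositives, and this makes a real difference. You fix ``not (i)'' (so, via Lemma~\ref{lem:hull_contained}, the simplex meets $\rho X$) and try to deduce (ii) by a Carath\'eodory-style minimal-volume descent. The paper instead fixes ``not (ii)'' -- so there is some non-vertex $x \in \conv\{x_0,\ldots,x_n\} \cap X$ -- and \emph{derives} (i) constructively, using $x$ as the apex of a \emph{continuous} shrinking: for $t \in [0,1]$ put $\Delta_t = \conv\{(1-t)x_0 + tx, \ldots, (1-t)x_n + tx\}$. A short barycentric-coordinate argument using affine independence shows $\Delta_t \cap \rho X = \emptyset$ for all $t \in (0,1]$ (any $x_j \in \Delta_t$ would force $x = x_j$). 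In particular $(x_i, x] \cap \rho X = \emptyset$, so Lemma~\ref{lem:boundary_in_interval} gives $[x_i,x] \sub X$; hence each $\Delta_t$ with $t>0$ is the hull of a subset of $X$ that avoids $\rho X$, and Lemma~\ref{lem:hull_contained} gives $\Delta_t \sub X$. Letting $t \to 0$ and using that $X$ is closed yields $\Delta_0 = \conv\{x_0,\ldots,x_n\} \sub X$. The continuous family sidesteps all questions of termination and re-ascent.

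Your minimal-volume argument, by contrast, has a genuine gap -- one you half-acknowledge with ``which I then need to push back up.'' Modelled on Theorem~\ref{thm:closed-cara}, it can at best show that the \emph{minimal-volume} sub-simplex has no extra $X$-points; but the proposition is a statement about the \emph{original} simplex $\conv\{x_0,\ldots,x_n\}$, which under your containment constraint is the one of \emph{maximal} volume. There is no contradiction between the original simplex containing an extra point of $X$ and a smaller sub-simplex being clean, so reaching the minimizer proves nothing about $\conv\{x_0,\ldots,x_n\}$. (In Theorem~\ref{thm:closed-cara} the conclusion was existential -- \emph{some} simplex through $a$ is clean -- which is why minimality sufficed there; here it does not.) There are also secondary issues: a strictly decreasing sequence of volumes need not terminate, so one must produce a minimizer directly by compactness, but the constraint ``$\conv\{y_0,\ldots,y_n\}$ meets $\rho X$'' is not a closed condition since $\rho X$ need not be closed (Example~\ref{egs:boundary}\bref{eg:bdy-L}), so the existence of a minimizer is itself unclear.
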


Conditions~(i) and~(ii) are mutually exclusive when $n > 0$. The affine
independence hypothesis cannot be dropped: consider $X = [0, 1] \cup \{2\}
\sub \R$ and $(x_0, x_1, x_2) = (0, 1, 2)$.

\begin{proof}
Suppose that~(ii) does not hold. Then we can choose a point $x \in
\conv\{x_0, \ldots, x_n\} \cap X$ with $x \not\in \{x_0, \ldots, x_n\}$. We
must prove that $\conv\{x_0, \ldots, x_n\} \sub X$.

Define, for each $t \in [0, 1]$, 
\[
\Delta_t =
\conv \{(1 - t)x_0 + tx, \ldots, (1 - t)x_n + tx\}.
\]
First we claim that $\Delta_t \cap \rho X = \emptyset$ for all $t \in (0,
1]$.

Indeed, let $t \in (0, 1]$. Since $x \in \conv\{x_0, \ldots, x_n\}$, we
have $\Delta_t \sub \conv\{x_0, \ldots, x_n\}$ and so $\Delta_t \cap \rho X
\sub \{x_0, \ldots, x_n\}$ by~\eqref{eq:dich-sub}. Supposing for a
contradiction that $\Delta_t \cap \rho X \neq \emptyset$, it follows
without loss of generality that $x_0 \in \Delta_t$, so that $x_0$ is a
convex combination of $x_0, \ldots, x_n, x$ with a nonzero coefficient of
$x$. But $x$ is in the convex hull of $x_0, \ldots, x_n$, which are
affinely independent, forcing $x = x_0$ and contradicting our assumption
that $x \not\in \{x_0, \ldots, x_n\}$. This proves the claim that
$\Delta_t \cap \rho X = \emptyset$.

In particular, $(x_i, x] \cap \rho X = \emptyset$ for each $i \in \{0,
\ldots, n\}$. Since $x_i, x \in X$, Lemma~\ref{lem:boundary_in_interval}
then implies that $[x_i, x] \sub X$, giving $(1 - t)x_i + tx \in X$
for each $t \in (0, 1]$ and $i \in \{0, \ldots, n\}$. Hence $\Delta_t$ is
the convex hull of a subset of $X$, which allows us to apply
Lemma~\ref{lem:hull_contained} and deduce that $\Delta_t \sub X$, for each
$t \in (0, 1]$. And since $X$ is closed, it follows that $X$ contains
$\Delta_0 = \conv\{x_0, \ldots, x_n\}$.
\end{proof}

\section{Cores}
\label{sec:core}

Here we introduce the key convex-geometric player in our main theorem. Our
focus now is on subsets of Euclidean space $\R^N$. The notation $\conv$
and $\cconv$ will always refer to (closed) convex hulls in $\R^N$, and the
notation $[a, b]$ means the straight line segment from $a$ to $b$ in $\R^N$.

\begin{definition}
The \demph{core} of a subset $X \sub \R^N$ is $\core(X) = \cconv(\rho X)
\cap X$. 
\end{definition}

An example of a core is shown in Figure~\ref{fig:inner_boundary}(b). The
following further examples build on Examples~\ref{egs:boundary}. 

\begin{examples}
\label{egs:core}
\begin{enumerate}
\item 
The core of a closed annulus in $\R^2$ is the inner bounding circle.

\item
Let $X$ be $\R^2$ with an open disc and an open square removed, as in 
Figure~\ref{fig:equiv_spaces}(c). Then $\core(X)$ is as shown in
Figure~\ref{fig:equiv_spaces}(a). 

\item
The core of a finite subset $X$ of $\R^N$ with more than one point is $X$
itself.

\item
The core of the \textsf{L}-shaped space $X$ of Remark~\ref{rmk:L} is $X$.

\item
\label{eg:core-convex}
A closed subset of $\R^N$ has empty core if and only if it is convex. 
\end{enumerate}
\end{examples}

Inner boundaries are defined for abstract metric spaces, but cores are only
defined for metric spaces embedded in $\R^N$. Nevertheless, the notion of
core is intrinsic in the sense we now explain.

Write $\aff(X)$ for the affine hull of a set $X \sub \R^N$. The following
lemma is classical.

\begin{lemma}
\label{lemma:aff-ext}
Let $X \sub \R^N$, let $Y \sub \R^M$, and let $f: X \to Y$ be an invertible
isometry. Then $f$ extends uniquely to an invertible isometry $\aff(X) \to
\aff(Y)$. 
\end{lemma}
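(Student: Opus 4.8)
The plan is to reduce to a statement about linear maps and then apply the polarization identity. If $X = \emptyset$ the claim is vacuous, so fix a point $x_0 \in X$. Translating both ambient spaces---replacing $X$ by $X - x_0$, $Y$ by $Y - f(x_0)$, and $f$ by the isometry $x \mapsto f(x + x_0) - f(x_0)$---we may assume $0 \in X$ and $f(0) = 0$; an isometric extension of the translated map corresponds to one of the original, so this loses no generality. Now $\aff(X) = V := \mathrm{span}(X)$ and $\aff(Y) = W := \mathrm{span}(Y)$. Since $f$ fixes $0$ and preserves distances, $\|f(x)\| = \|x\|$ for all $x \in X$, and expanding $\|f(x) - f(x')\|^2 = \|x - x'\|^2$ gives $\langle f(x), f(x') \rangle = \langle x, x' \rangle$ for all $x, x' \in X$.

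Next I would choose $x_1, \ldots, x_k \in X$ forming a basis of $V$ (possible as $X$ spans $V$) and let $T : V \to \R^M$ be the linear map with $T(x_i) = f(x_i)$. By bilinearity and the previous paragraph, $\langle T(u), T(v) \rangle = \langle u, v \rangle$ for all $u, v \in V$; in particular $\|T(v)\| = \|v\|$, and $T$ is injective since $T(v) = 0$ forces $\|v\|^2 = \|T(v)\|^2 = 0$. The key point is that $T|_X = f$: for $x = \sum_i \lambda_i x_i \in X$,
\[
\|f(x) - T(x)\|^2
= \|f(x)\|^2 - 2\sum_i \lambda_i \langle f(x), f(x_i) \rangle + \|T(x)\|^2
= \|x\|^2 - 2\sum_i \lambda_i \langle x, x_i \rangle + \|x\|^2 = 0,
\]
using $\langle f(x), f(x_i) \rangle = \langle x, x_i \rangle$ and $\|T(x)\| = \|x\|$. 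Consequently $Y = f(X) \sub T(V)$, so $W = \mathrm{span}(Y) \sub T(V)$, while conversely $T(V) = \mathrm{span}\{f(x_1), \ldots, f(x_k)\} \sub W$. Hence $T$ is an invertible isometry $\aff(X) \to \aff(Y)$ extending $f$.

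For uniqueness, let $g : \aff(X) \to \aff(Y)$ be any isometric extension of $f$. Fixing $p \in \aff(X)$ and applying the polarization argument to $v \mapsto g(p + v) - g(p)$ on the linear space $\aff(X) - p$ shows, exactly as in the displayed computation, that this map is linear; thus $g$ is affine. An affine map on $\aff(X)$ is determined by its restriction to any affinely spanning subset, in particular to $X$, where $g = f$. Therefore $g = T$. The argument has no real obstacle: the only steps needing a moment's care are the compatibility of the translation with the notion of extension, and the observation that $T(V)$ equals $\aff(Y)$ rather than merely being contained in it, which follows once $T|_X = f$ is known.
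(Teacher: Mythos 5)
Your proof is correct, but it follows a genuinely different route from the paper's. The paper does not prove existence at all: it cites Theorem~11.4 of Wells and Williams for the existence of the extension, and then gives only a short uniqueness argument that works directly with two competing extensions $\bar{f}, \tilde{f}$: for each $a \in \aff(X)$, the set of points equidistant from $\bar{f}(a)$ and $\tilde{f}(a)$ is an affine subset of $\R^M$, contains $Y$ (because both extensions agree with $f$ on $X$ and are distance-preserving), hence contains $\aff(Y)$ and in particular $\bar{f}(a)$, forcing $\bar{f}(a) = \tilde{f}(a)$. You instead prove both halves from scratch by linear algebra: translate so that $0 \in X$ and $f(0)=0$, use polarization to get $\langle f(x), f(x')\rangle = \langle x, x'\rangle$, build the candidate linear map $T$ from a basis of $\mathrm{span}(X)$ drawn from $X$, and verify $T|_X = f$ by the zero-norm computation; for uniqueness you argue that any isometric extension fixing $0$ preserves inner products on the whole linear hull and is therefore linear, hence affine in general, and affine maps are determined on an affinely spanning set. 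Both arguments are sound. Yours has the advantage of being self-contained and making the existence completely explicit; the paper's uniqueness trick is slicker in that it never needs to establish linearity or affinity of the extensions, only the elementary fact that perpendicular-bisector sets are affine. One small presentational point: when you say the displayed computation ``shows that this map is linear,'' what it literally shows is that $h(v) = g(p+v)-g(p)$ agrees with the linear map $S$ determined by its values on a basis, using that $h$ preserves inner products on all of $\aff(X)-p$; it is worth spelling this out rather than leaving it as ``exactly as in the displayed computation,'' since the displayed computation only compared $f$ and $T$ on the subset $X$.
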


\begin{proof}
Existence is proved in Theorem~11.4 of Wells and
Williams~\cite{Wells1975}. We sketch the uniqueness argument. Let $\bar{f},
\tilde{f}: \aff(X) \to \aff(Y)$ be isometries extending $f$. Then for each
$a \in \aff(X)$, the set of points equidistant from $\bar{f}(a)$ and
$\tilde{f}(a)$ is affine and contains $Y$ (using the surjectivity of $f$),
and therefore contains $\aff(Y)$. In particular, it contains $\bar{f}(a)$,
so $\bar{f}(a) = \tilde{f}(a)$.
\end{proof}

We will consider the class of abstract metric spaces embeddable in $\R^N$.

\begin{definition}
\label{defn:euc-set}
A \demph{(closed) Euclidean set} is a metric space isometric to a (closed)
metric subspace of $\R^N$ for some $N \geq 0$.
\end{definition}

Let $X$ be a Euclidean set. Lemma~\ref{lemma:aff-ext} implies that
$\aff(X)$, $\conv(\rho X)$, $\cconv(\rho X)$, $\conv(\rho X) \cap X$ and
$\core(X)$ are all well-defined as metric spaces, up to isometry. For
example, if we embed $X$ isometrically into $\R^N$ in one way and into
$\R^M$ in another, then the core of the copy of $X$ in $\R^N$
is isometric to the core of the copy of $Y$ in $\R^M$. This is
the sense in which the core is an intrinsic construction. 

We also note that convexity is an intrinsic property of Euclidean sets $X$,
being equivalent to the property that for all $x, y \in X$, there exists an
isometry $[0, d(x, y)] \to X$ with $0 \mapsto x$ and $d(x, y) \mapsto y$.

Recall that the inner boundary construction is idempotent
(Remark~\ref{rmk:bdys}). We now show that the core construction is
idempotent too, and relate the two idempotents.

\begin{lemma}
\label{lemma:core-elem}
For a Euclidean set $X$,
\begin{enumerate}
\item 
\label{part:ce-sub}
$\rho X \sub \core(X)$;

\item
\label{part:ce-comp}
$\core(\rho X) = \rho X = \rho(\core(X))$;

\item
\label{part:ce-idem}
$\core(\core(X)) = \core(X)$.
\end{enumerate}
\end{lemma}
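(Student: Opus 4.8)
The plan is to get parts~\bref{part:ce-sub} and~\bref{part:ce-idem} and the first equality of~\bref{part:ce-comp} essentially for free from the definitions, isolating the equality $\rho(\core(X)) = \rho X$ as the one claim with content. Part~\bref{part:ce-sub} is just $\rho X \sub X$ together with $\rho X \sub \cconv(\rho X)$, which give $\rho X \sub \cconv(\rho X) \cap X = \core(X)$. For the first equality of~\bref{part:ce-comp}, use that $\rho$ is idempotent (Remark~\ref{rmk:bdys}): $\core(\rho X) = \cconv(\rho(\rho X)) \cap \rho X = \cconv(\rho X) \cap \rho X = \rho X$, the last step again by $\rho X \sub \cconv(\rho X)$. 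Granting the second equality of~\bref{part:ce-comp}, part~\bref{part:ce-idem} follows since $\core(X) \sub \cconv(\rho X)$ by definition: $\core(\core(X)) = \cconv(\rho(\core(X))) \cap \core(X) = \cconv(\rho X) \cap \core(X) = \core(X)$.

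So, fixing an isometric embedding $X \sub \R^N$, it remains to prove $\rho(\core(X)) = \rho X$. For the inclusion $\rho X \sub \rho(\core(X))$: suppose $x \in \rho X$ is adjacent to $x' \in X$. Betweenness is symmetric, hence so is adjacency, so $x' \in \rho X$ as well, and therefore $x, x' \in \core(X)$ by~\bref{part:ce-sub}. Any point strictly between $x$ and $x'$ in the subspace $\core(X)$ would also be strictly between them in $X$ (the metric of $\core(X)$ is the restriction), contradicting adjacency in $X$; so $x$ and $x'$ remain adjacent in $\core(X)$, whence $x \in \rho(\core(X))$.

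The reverse inclusion $\rho(\core(X)) \sub \rho X$ is the crux. Suppose $x \in \rho(\core(X))$ is adjacent in $\core(X)$ to some $x' \in \core(X)$, but, for contradiction, that $x \notin \rho X$. Since $x' \in X$ and $x \neq x'$, the failure of $x$ to be adjacent to $x'$ in $X$ yields a point $p$ with $x \prec p \prec x'$ in $X$. Now betweenness in a subspace of $\R^N$ is ordinary Euclidean betweenness, so $p$ lies on the line segment $[x, x'] \sub \R^N$; and since $x, x' \in \core(X) \sub \cconv(\rho X)$ with $\cconv(\rho X)$ convex, we get $[x,x'] \sub \cconv(\rho X)$, so $p \in \cconv(\rho X) \cap X = \core(X)$. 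Then $p$ is a point of $\core(X)$ strictly between $x$ and $x'$, contradicting their adjacency in $\core(X)$; hence $x \in \rho X$. The one step worth being careful about is precisely this identification of subspace-betweenness with Euclidean betweenness, since it is what lets the convexity of $\cconv(\rho X)$ be brought to bear; everything else is routine manipulation of the definitions of $\core$ and $\rho$ together with the idempotency of $\rho$.
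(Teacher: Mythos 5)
Your proof is correct, and it simply fleshes out the sketch that the paper gives for this lemma: part~\bref{part:ce-sub} immediate from the definitions, the first equality of~\bref{part:ce-comp} from idempotency of $\rho$, the second equality of~\bref{part:ce-comp} by an argument using the convexity of $\cconv(\rho X)$ (your use of the line segment $[x,x'] \sub \cconv(\rho X)$ to exhibit $p \in \core(X)$ is precisely the intended mechanism), and part~\bref{part:ce-idem} from the definitions together with the second identity in~\bref{part:ce-comp}. This is essentially the same approach; the paper just declines to write out the details.
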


\begin{proof}
This result will not be needed and the proof is elementary, so we just
sketch it. Part~\bref{part:ce-sub} is immediate. The first identity
in~\bref{part:ce-comp} follows from $\rho$ being idempotent, and the second
is proved by a simple argument directly from the definitions, using the
convexity of $\cconv(\rho X)$. Part~\bref{part:ce-idem} follows from
the definitions and the second identity in~\bref{part:ce-comp}.
\end{proof}

Lemma~\ref{lem:restrict_inner_boundary} provides conditions under which
maps $\oppairu{X}{Y}$ of metric spaces restrict to mutually inverse
isometries between their inner boundaries. We now show that for
Euclidean sets, such maps also induce an isometry between the cores.

\begin{lemma}
\label{lemma:ib-to-core}
Let $\oppair{X}{Y}{f}{g}$ be maps between Euclidean sets. If $f$ and $g$
restrict to mutually inverse maps $\oppairu{\rho X}{\rho Y}$ then they also
restrict to mutually inverse maps $\oppairu{\core(X)}{\core(Y)}$. 
\end{lemma}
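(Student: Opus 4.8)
The plan is to show that the isometries $\rho X \to \rho Y$ and $\rho Y \to \rho X$ obtained from $f$ and $g$ extend, via Lemma~\ref{lemma:aff-ext}, to isometries of affine hulls that carry $\cconv(\rho X)$ to $\cconv(\rho Y)$, and then to check that $f$ itself agrees with this extension on $\core(X)$. First I would note that, by hypothesis, $f|_{\rho X} \colon \rho X \to \rho Y$ is an invertible isometry with inverse $g|_{\rho Y}$, so by Lemma~\ref{lemma:aff-ext} it extends uniquely to an invertible isometry $\bar{f} \colon \aff(\rho X) \to \aff(\rho Y)$, with $\bar{f}^{-1}$ the extension of $g|_{\rho Y}$. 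Being an affine isometric bijection, $\bar{f}$ sends the closed convex hull of $\rho X$ onto the closed convex hull of $\rho Y$, i.e.\ $\bar{f}(\cconv(\rho X)) = \cconv(\rho Y)$ (and symmetrically for $\bar{f}^{-1}$).

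The key remaining point is that $f$ restricted to $\core(X) = \cconv(\rho X) \cap X$ lands in $\core(Y)$ and coincides with $\bar{f}$ there. Let $x \in \core(X)$. Since $\core(X) = \cconv(\rho X) \cap X \subseteq X$, the point $f(x)$ is defined; I must show $f(x) \in \cconv(\rho Y)$ and $f(x) = \bar{f}(x)$. For this I would argue that $f$ agrees with $\bar f$ on all of $\cconv(\rho X)$: a point $x \in \cconv(\rho X)$ is a limit of convex combinations of points of $\rho X$, and since $f$ is $1$-Lipschitz while $\bar f$ is an isometry, the identity $d(f(x), \bar f(p)) = d(f(x), f(p)) \le d(x,p)$ for $p \in \rho X$ shows $f(x)$ is equidistant-or-closer to every $\bar f(p)$ than $\bar f(x)$ is; running the same estimate with $g$ and $\bar f^{-1}$ in the reverse direction, together with the fact that $g f$ fixes $\rho X$, pins down $f(x)$. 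Concretely: for $p \in \rho X$, $d(x,p) = d(gf(x_0)\text{-type bound})$—I expect the clean way is to write $d(\bar f(x), f(x)) \le d(\bar f(x), \bar f(p)) + d(\bar f(p), f(x)) = d(x,p) + d(f(p), f(x)) \le 2d(x,p)$ is too weak, so instead use that $\bar f^{-1}$ is an isometry and $g$ is a contraction with $gf|_{\rho X} = \mathrm{id}$ to get $d(x, \bar f^{-1} f(x)) \le d(f(x_?), \dots)$ collapsing to $0$. This gives $\bar f^{-1} f(x) = x$, hence $f(x) = \bar f(x) \in \cconv(\rho Y)$, and as $f(x) \in Y$ too we conclude $f(x) \in \core(Y)$. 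The symmetric argument handles $g$, and since $\bar f$ and $\bar f^{-1}$ are mutually inverse, so are the restrictions $\oppairu{\core(X)}{\core(Y)}$.

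The main obstacle is the middle step: verifying that the contraction $f$ actually agrees with the \emph{isometric} extension $\bar f$ on the non-inner-boundary points of the core. The subtlety is that $\core(X)$ need not be convex and its points need not lie on geodesics within $X$, so one cannot simply invoke Menger-convexity or a betweenness argument inside $X$; the leverage has to come entirely from the round-trip condition $gf|_{\rho X} = \mathrm{id}$ combined with the rigidity of affine isometries of $\R^N$ (Lemma~\ref{lemma:aff-ext}), exactly as in the uniqueness half of that lemma's proof. I would model the estimate on that argument: the set of points equidistant from $\bar f(x)$ and $f(x)$ (if they differed) would be a proper affine subspace, yet it must contain $\bar f(\rho X) = f(\rho X)$ and hence $\aff(f(\rho X)) = \aff(\rho Y) \supseteq \cconv(\rho Y) \ni f(x)$, a contradiction once one checks $f(x) \in \cconv(\rho Y)$—which itself follows because $f(x) = g^{-1}$-preimage considerations force it into the image of $\cconv(\rho X)$ under $\bar f$. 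Packaging this cleanly, rather than any single hard inequality, is where the real work lies.
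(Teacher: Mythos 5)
Your setup is the same as the paper's: use Lemma~\ref{lemma:aff-ext} to extend $f|_{\rho X}$ and $g|_{\rho Y}$ to mutually inverse isometries (call them $F$ and $G$) carrying $\cconv(\rho X)$ to $\cconv(\rho Y)$, and then show that $f$ agrees with $F$ on $\core(X)$. You also correctly isolate the key estimate: for $x \in \core(X)$ and $p \in \rho X$,
\[
d\bigl(f(x), f(p)\bigr) \;\le\; d(x,p) \;=\; d\bigl(F(x), F(p)\bigr),
\]
so $f(x)$ is no farther than $F(x)$ from each point of $\rho Y = f(\rho X)$.

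Where you go astray is the final step. You try to conclude via the \emph{equidistant} set $\{b : d(f(x),b) = d(F(x),b)\}$, modelled on the uniqueness half of Lemma~\ref{lemma:aff-ext}. That device works there because \emph{both} maps are isometries, so the displayed inequality becomes an equality; here $f$ is merely a contraction, so you only have $d(f(x),y) \le d(F(x),y)$ for $y \in \rho Y$, not equality, and there is no reason the equidistant hyperplane should contain $\rho Y$ at all. Your own remarks flag the unease—the triangle-inequality attempt is ``too weak,'' and the appeal to ``$g^{-1}$-preimage considerations'' to get $f(x) \in \cconv(\rho Y)$ is circular, since that containment is essentially what you are trying to prove.

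The fix is to replace the hyperplane with the closed \emph{half-space}
\[
V = \bigl\{\, b : d(f(x), b) \le d(F(x), b) \,\bigr\}.
\]
This is either all of ambient space or a genuine closed half-space (the defining inequality is affine in $b$), hence closed and convex. By your estimate, $V \supseteq \rho Y$, hence $V \supseteq \cconv(\rho Y)$. Since $x \in \cconv(\rho X)$, we have $F(x) \in \cconv(\rho Y) \subseteq V$, so $d(f(x), F(x)) \le d(F(x), F(x)) = 0$, i.e.\ $f(x) = F(x)$. No prior knowledge of $f(x) \in \cconv(\rho Y)$ is needed—it drops out as a consequence. With that one replacement (hyperplane $\rightsquigarrow$ half-space), your argument closes and coincides with the paper's.
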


\begin{proof}
Suppose that $f$ and $g$ restrict to mutually inverse maps 
\[
\oppair{\rho X}{\rho Y}{f'}{g'}.
\]
It follows from Lemma~\ref{lemma:aff-ext} that $f'$ and $g'$ extend
uniquely to mutually inverse maps
\[
\oppair{\cconv(\rho X)}{\cconv(\rho Y)}{F}{G}.
\]
It suffices to show that $f(x) = F(x)$ for all $x \in \core(X)$, and
similarly for $g$. Evidently we need only prove the result for $f$.

Let $x \in \core(X)$. For each $y \in \rho Y$, the shortness of $f$ implies that 
\[
d(f(x), y) 
\leq
d(x, f'^{-1}(y))
=
d(F(x), F(f'^{-1}(y)))
=
d(F(x), y).
\]
Choosing an embedding of $Y$ into $\R^M$, the set
\[
V = \{ b \in \R^M : d(f(x), b) \leq d(F(x), b) \}
\]
therefore contains $\rho Y$. But $V$ is either $\R^M$ or a closed
half-space of it, so it is closed and convex; hence $\cconv(\rho Y) \sub
V$. In particular, $F(x) \in V$, giving $f(x) = F(x)$.
\end{proof}

\begin{remark}
    Note that $f$ and $g$ in the statement of the previous lemma need not be isometries, even though their respective restrictions are. For example, the retraction $X \to \core(X)$ constructed in the next proposition is not an isometry and yet, together with the inclusion $\core(X) \to X$, gives a pair of maps satisfying the hypotheses of Lemma~\ref{lemma:ib-to-core}.
\end{remark}

For a nonempty convex closed subset $C$ of $\R^N$, every point $x \in \R^N$
has a unique closest point $\pi(x) \in C$. This defines a
short retraction $\pi: \R^N \to C$ of the inclusion $C
\hookrightarrow \R^N$, called the \demph{metric projection} onto $C$
(Theorem~1.2.1 of~\cite{Schneider2013}).

For a \emph{non}convex closed subset $X$ of $\R^N$, the inner boundary
is nonempty (Example~\ref{egs:boundary}\bref{eg:bdy-menger}), so we have a
metric projection map $\R^N \to \cconv(\rho X)$.

\begin{proposition}
\label{propn:core-retract}
Let $X$ be a \emph{non}convex closed subset of $\R^N$. Then metric projection
$\R^N \to \cconv(\rho X)$ restricts to a map $X \to \core(X)$. 
\end{proposition}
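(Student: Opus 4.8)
The plan is to take the metric projection $\pi \colon \R^N \to \cconv(\rho X)$ and show that for $x \in X$ we in fact have $\pi(x) \in X$, which combined with $\pi(x) \in \cconv(\rho X)$ gives $\pi(x) \in \cconv(\rho X) \cap X = \core(X)$. So let $x \in X$ and write $a = \pi(x)$. If $a = x$ we are done (and this happens precisely when $x \in \core(X)$), so assume $a \neq x$; I want to show $a \in X$. The key geometric fact about metric projection onto a convex set is that the segment $[x, a]$ meets $\cconv(\rho X)$ only at $a$: if some $b \in [x,a]$ with $b \neq a$ lay in $\cconv(\rho X)$, then $b$ would be strictly closer to $x$ than $a$ is, contradicting minimality. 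So $[x, a] \cap \cconv(\rho X) = \{a\}$.

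Now I apply Lemma~\ref{lem:boundary_in_interval} with $E = \R^N$ (every closed interval in $\R^N$ is compact) to the closed set $X$, the points $x, ? \in X$ — but here is the subtlety: the lemma needs both endpoints of the interval to lie in $X$, and I only know $x \in X$ and $a \in \cconv(\rho X)$, not $a \in X$. The fix is to argue along $[x,a]$ using Lemma~\ref{lem:boundary_in_interval} more carefully, or better, to use Proposition~\ref{propn:conv-union}. Since $a \in \cconv(\rho X) \sub \cconv(X) = X \cup \cconv(\rho X)$ trivially gives nothing, I instead reason as follows: suppose $a \notin X$. Since $\rho X$ is nonempty and $\cconv(\rho X)$ is a nonempty closed convex set, consider the point $a \in \cconv(\rho X)$. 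The idea is that $a$ being a projection point and $a \notin X$ should contradict the fact that points of $\cconv(\rho X) \cap X = \core(X)$ "cover enough" — more precisely, I want to find a point of $X$ on $[x, a]$ strictly between, which by Lemma~\ref{lem:boundary_in_interval} applied to a suitable sub-interval with endpoints in $X$ would then force a point of $\rho X$ into $[x,a)$, hence into $\cconv(\rho X)$, hence (being on the open segment) strictly closer to $x$ than $a$ — contradiction.

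Concretely: let $y_0 \in [x, a] \cap X$ be the point of this compact nonempty set (it contains $x$) maximising $d(x, y_0)$. If $y_0 = a$ then $a \in X$ and we are done. Otherwise $y_0 \prec a$ strictly along the segment, and $[x, y_0] \cap X$ has no point beyond $y_0$. Now I claim $y_0 \in \rho X$: the segment continues past $y_0$ towards $a$ staying in $X^c$ for a while, and since $a$ itself need not be in $X$ I cannot directly quote Lemma~\ref{lem:boundary_in_interval}. So instead I argue that $y_0$ is adjacent in $X$ to... — this is the main obstacle, and the clean route is: since $a = \pi(x)$, the whole ray from $x$ through $a$, restricted to $[x,a]$, meets $\cconv(\rho X)$ only at $a$; in particular $y_0 \notin \cconv(\rho X)$, so $y_0 \notin \rho X$ (as $\rho X \sub \cconv(\rho X)$). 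But then by Lemma~\ref{lem:boundary_in_interval} applied to $x, ? $— I need the other endpoint in $X$. I resolve this by using that $X$ is closed and taking the supremum $t^* = \sup\{t \in [0,1] : (1-t)x + ta \in X\}$, with witness point $w = (1-t^*)x + t^*a \in X$ (closedness). If $w = a$, done. If $w \neq a$, then for every $t > t^*$ the point $(1-t)x+ta \notin X$, and I apply Lemma~\ref{lem:boundary_in_interval} to the interval $[w, a']$ for $a'$ a point of $X$ just beyond — but there is no such point. The genuinely correct move, which I expect to be the crux, is: pick any point $p \in \rho X \sub \cconv(\rho X)$; since $\cconv(\rho X)$ is convex, $[a, p] \sub \cconv(\rho X)$, and now consider the triangle with vertices $x$, $w = t^*$-point, $p$. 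Apply Lemma~\ref{lem:boundary_in_interval} to $w, p \in X$ (note $p \in \rho X \sub X$) and points on $[w,p]$: either the whole segment is in $X$ or it meets $\rho X$. Pushing $w$ towards $a$ along the boundary of $X$ and using compactness of the relevant intervals, one traps a point of $\rho X$ in the open segment $(x, a)$, which lies in $\cconv(\rho X)$ and is strictly closer to $x$ than $a$, contradicting $a = \pi(x)$. Hence $w = a$, so $a \in X$ and $\pi(x) = a \in \core(X)$, as required. The main obstacle is organising this "trap a boundary point on $(x,a)$" argument cleanly; I expect the slickest version uses Lemma~\ref{lem:boundary_in_interval} directly on $[x, w]$ with $w$ the last point of $X$ on the segment, noting that just beyond $w$ the segment leaves $X$, so $w \in \rho X$ unless $w = a$ — and this does follow from Lemma~\ref{lem:boundary_in_interval} once one observes that if $w \ne a$ then $w \in \rho X \subseteq \cconv(\rho X)$ with $w \in [x,a)$, already the contradiction, making the argument short.
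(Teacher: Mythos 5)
Your overall plan---show that $a = \pi(x)$ actually lies in $X$---is the right one, and your ``last point $w$ of $X$ on $[x,a]$'' idea can be made to work, but as written the final step has a genuine gap. You claim that if $w \neq a$ then $w \in \rho X$, citing Lemma~\ref{lem:boundary_in_interval}; that lemma, however, requires \emph{both} endpoints of the interval to lie in $X$, and here the far endpoint is $a$, which is precisely the point we do not yet know to be in $X$. There is no choice of $z \in X$ beyond $w$ on the segment to which the lemma could be applied, so the conclusion $w \in \rho X$ is unsupported (and indeed it need not hold: all the contradiction requires is $w \in \cconv(\rho X)$, a strictly weaker statement).

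The missing tool is Proposition~\ref{propn:conv-union}, which you actually considered and then dismissed too quickly. You applied it only to the single point $a$, where it gives nothing new; the point is to apply it to the \emph{whole segment}. Since $x \in X$ and $a \in \cconv(\rho X)$, we have $[x,a] \subseteq \cconv(X) = X \cup \cconv(\rho X)$. The two sets $[x,a] \cap X$ and $[x,a] \cap \cconv(\rho X)$ are closed, nonempty (containing $x$ and $a$ respectively), and cover the connected set $[x,a]$, so they intersect in some point $y$. Then $y \in \cconv(\rho X)$ with $d(x,y) \leq d(x,a)$, forcing $y = a$ by uniqueness of the metric projection, hence $a \in X$. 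This is exactly the paper's argument; your ``sup'' point $w$ plays the role of $y$, but the justification that $w \in \cconv(\rho X)$ must come from the covering $\cconv(X) = X \cup \cconv(\rho X)$ (which ultimately rests on the closed Carath\'eodory theorem), not from Lemma~\ref{lem:boundary_in_interval}.
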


Thus, every point of $X$ has a unique closest point in $\core(X)$.

\begin{proof}
Write $\pi: \R^N \to \cconv(\rho X)$ for metric projection. We have to show
that whenever $x \in X$, then also $\pi(x) \in X$. Certainly $[x, \pi(x)]
\sub \cconv(X)$, so by Proposition~\ref{propn:conv-union}, 
\[
[x, \pi(x)]
=
\bigl( [x, \pi(x)] \cap X \bigr) 
\cup
\bigl( [x, \pi(x)] \cap \cconv(\rho X) \bigr).
\]
Both sets in this union are closed in $\R^N$, and nonempty since $x$ belongs
to the first and $\pi(x)$ to the second. Since $[x, \pi(x)]$ is connected,
$[x, \pi(x)] \cap X \cap \cconv(\rho X)$ contains some point $y$. Then $y
\in \cconv(\rho X)$ with $d(x, y) \leq d(x, \pi(x))$, which by definition
of $\pi$ implies that $y = \pi(x)$. Hence $\pi(x) \in X$.
\end{proof}

The final result of this section will not be needed for the main theorem,
but is a basic property of cores.

\begin{proposition}
\label{prop:core-closure}
Let $X$ be a closed Euclidean set. Then $\core(X) = \overline{\conv(\rho X)
\cap X}$. 
\end{proposition}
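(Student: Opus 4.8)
The plan is to prove the two inclusions separately; one is routine and the other is where the work lies. For $\overline{\conv(\rho X)\cap X}\sub\core(X)$, recall that in $\R^N$ the closed convex hull is the closure of the convex hull (Remark~\ref{rmk:L}), so $\core(X)=\cconv(\rho X)\cap X=\overline{\conv(\rho X)}\cap X$ is closed and contains $\conv(\rho X)\cap X$; take closures. For the reverse inclusion, let $a\in\core(X)=\cconv(\rho X)\cap X$; the task is to produce points of $\conv(\rho X)\cap X$ arbitrarily close to $a$. If $X$ is convex then $\core(X)=\emptyset$ (Example~\ref{egs:core}\bref{eg:core-convex}), so assume $X$ is nonconvex and $\rho X\neq\emptyset$. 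Since $\rho X\sub\conv(\rho X)\cap X$ we may assume $a\notin\overline{\rho X}$ (otherwise $a\in\overline{\rho X}\sub\overline{\conv(\rho X)\cap X}$), and likewise we may assume $a\notin\conv(\rho X)$.

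I would then record two facts, writing $B(a,t)$ for the open ball of radius $t$ about $a$. \emph{(a)} Because $a\notin\rho X$, no point of $X$ is adjacent to $a$; chasing the betweenness relation along a segment $[a,x]$ with $x\in X\setminus\{a\}$ and using that closed intervals in $\R^N$ are compact, one finds that $[a,x]\cap X$ contains points arbitrarily close to $a$. \emph{(b)} Fixing $r>0$ with $\overline{B(a,2r)}\cap\overline{\rho X}=\emptyset$, the set $X\cap\overline{B(a,r)}$ is convex: for $x,z$ in it the segment $[x,z]$ lies in the ball, hence misses $\rho X$, so Lemma~\ref{lem:boundary_in_interval} forces $[x,z]\sub X$.

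Next, since $a\in\cconv(\rho X)=\overline{\operatorname{relint}(\cconv(\rho X))}$, choose $b\in\operatorname{relint}(\cconv(\rho X))$ with $|b-a|<r$. The standard line-segment principle for convex sets (if $b\in\operatorname{relint}(C)$ and $a\in\overline{C}$ then $[b,a)\sub\operatorname{relint}(C)$) gives $(a,b]\sub\operatorname{relint}(\cconv(\rho X))\sub\conv(\rho X)$, and $[a,b]\sub\overline{B(a,r)}$. If $b$ may always be taken inside $X$ — i.e. if $\operatorname{relint}(\cconv(\rho X))\cap X$ accumulates at $a$ — then fact~(b) gives $[a,b]\sub X$, so $(a,b]\sub\conv(\rho X)\cap X$, and letting $b\to a$ along such points finishes. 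Otherwise some ball $B(a,\rho')$ misses $\operatorname{relint}(\cconv(\rho X))\cap X$; then $X\cap\overline{B(a,\rho'/2)}$ and $\operatorname{relint}(\cconv(\rho X))\cap B(a,\rho')$ are disjoint convex sets both having $a$ in their closure, hence are separated by a hyperplane $H$ through $a$. Transporting the separating inequality along segments from $a$ to points of $\rho X$ (which, by convexity of $\cconv(\rho X)$, enter $\cconv(\rho X)$ near $a$) shows that $H$ in fact supports $\cconv(\rho X)$ at $a$.

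The hyperplane case is the crux, and I would dispatch it by induction on $\dim\cconv(\rho X)$. Put $A=H\cap\aff(\rho X)$, a proper affine subspace of $\aff(\rho X)$, and $X_A=X\cap A$, which is closed in $A$. A segment between two points of $A$ stays in $A$, so adjacency within $X_A$ is adjacency within $X$; hence $\rho(X_A)\sub\rho X$, whence $a\notin\overline{\rho(X_A)}$, $\conv(\rho(X_A))\sub\conv(\rho X)$ and $X_A\sub X$. Since $H$ supports $\cconv(\rho X)$ one has the classical face identity $A\cap\conv(\rho X)=\conv(A\cap\rho X)$, which should place $a$ inside $\core(X_A)$; the inductive hypothesis then yields $a\in\overline{\conv(\rho(X_A))\cap X_A}\sub\overline{\conv(\rho X)\cap X}$. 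The main obstacle is making this descent airtight: in general $\rho(X_A)$ is strictly smaller than $\rho X\cap A$ (a point of $\rho X$ lying on $H$ may be adjacent only to points of $X$ off $H$), so one must either argue separately that $a$ still lies in $\cconv(\rho(X_A))$ — I expect this to follow from $a\in X$ together with the observation implicit in fact~(b) that $X$ is too thin near $a$ to fill a relatively open neighbourhood of $a$ inside $H$ — or, more robustly, re-run the dichotomy of the previous paragraph inside $A$, carrying facts~(a) and~(b) along and recursing until the supporting-hyperplane alternative can no longer occur; Proposition~\ref{propn:dichotomy} and Lemma~\ref{lem:hull_contained} are the natural tools for closing that recursion.
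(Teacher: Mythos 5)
Your easy inclusion is fine, and your ``fact~(b)'' (that $X\cap\overline{B(a,r)}$ is convex for small $r$) is exactly one of the two local properties the paper establishes. But the main argument diverges from the paper's, and the divergence is where the gap lies. The paper never touches relative interiors or supporting hyperplanes: it picks an arbitrary $a'\in\overline{B_{\epsilon/2}(x)}\cap\conv(\rho X)$ (nonempty since $x\in\cconv(\rho X)$), lets $z$ be the point of the compact set $\overline{B_{\epsilon/2}(x)}\cap X$ closest to $a'$, writes $a'=\sum p_i y_i$ as a \emph{finite} convex combination of points $y_i\in\rho X$, and shows via the two local properties that $z'=(1-t)z+ta'$ lies in $\overline{B_{\epsilon/2}(x)}\cap X$ for small $t>0$; minimality of $z$ then forces $z=a'\in X$. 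Working with $\conv(\rho X)$ rather than $\cconv(\rho X)$ is the whole point---finite convex combinations let the shrinking-towards-$z$ trick run, and the variational set-up replaces your case split entirely.

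Your separating-hyperplane case is a genuine gap, and you say so yourself. Having cut out $A=H\cap\aff(\rho X)$ and $X_A=X\cap A$, the descent needs $a\in\core(X_A)$, i.e.\ $a\in\cconv(\rho(X_A))$. As you note, $\rho(X_A)$ may be strictly smaller than $\rho X\cap A$---a point of $\rho X$ on $H$ may be adjacent only to points of $X$ off $H$---and it can even be empty, in which case $\core(X_A)=\emptyset$ and the induction has nothing to recurse on. Neither of your proposed repairs is carried out: the ``$X$ is too thin near $a$'' heuristic is not an argument, and ``re-run the dichotomy inside $A$'' presupposes that $a$ still belongs to the closed convex hull of \emph{some} inner boundary in $A$, which is exactly what is in doubt. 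Until one of those is made precise, the hard case is unresolved. I'd recommend replacing the whole hyperplane branch with the paper's minimiser argument: your fact~(b), together with the observation that for $p\in B_\epsilon(x)\cap X$ and $y\in\rho X$ the segment $[p,y]$ stays in $X$ while it stays in $B_\epsilon(x)$ (a consequence of Lemma~\ref{lem:boundary_in_interval}), is all the local input you need.
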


\begin{proof}
The right-to-left inclusion is clear. For the converse, let $x \in
\core(X)$ and $\epsilon > 0$. We must prove that
\begin{equation}
\label{eq:cc-neq}
B_\epsilon(x) \cap \conv(\rho X) \cap X \neq \emptyset,
\end{equation}
where $B_\epsilon(x)$ denotes the open ball in $\R^N$. This is immediate if
$B_\epsilon(x) \cap \rho X \neq \emptyset$, so assume that $B_\epsilon(x)
\cap \rho X = \emptyset$.

We will use two properties of $B_\epsilon(x) \cap X$. First, it is
convex. For let $p, q \in B_\epsilon(x) \cap X$. Then $[p, q] \sub
B_\epsilon(x)$, so $[p, q] \cap \rho X = \emptyset$. By 
Lemma~\ref{lem:boundary_in_interval} or Lemma~\ref{lem:hull_contained}, it
follows that $[p, q] \sub X$, proving convexity.

The second property is that if $p \in B_\epsilon(x) \cap X$ and $y \in \rho
X$, then $[p, y] \cap B_\epsilon(x) \sub X$. For let $q \in [p, y] \cap
B_\epsilon(x)$. Then $[p, q] \sub B_\epsilon(x)$, so $[p, q) \cap \rho X =
\emptyset$, and then Lemma~\ref{lem:boundary_in_interval} applied to $q \in
[p, y]$ gives $q \in X$.

Since $x \in \cconv(\rho X)$, the set $\overline{B_{\epsilon/2}(x)} \cap
\conv(\rho X)$ is nonempty. We prove that it is a subset of $X$, which
will imply property~\eqref{eq:cc-neq} and complete the proof.

Thus, let $a \in \overline{B_{\epsilon/2}(x)} \cap \conv(\rho X)$
(Figure~\ref{fig:closure}). We must prove that $a \in X$. Choose a point $z
\in \overline{B_{\epsilon/2}(x)} \cap X$ minimising $d(a, z)$. It is enough
to prove that $z = a$.

\begin{figure}
    \centering
    \def\svgwidth{0.7\columnwidth}
    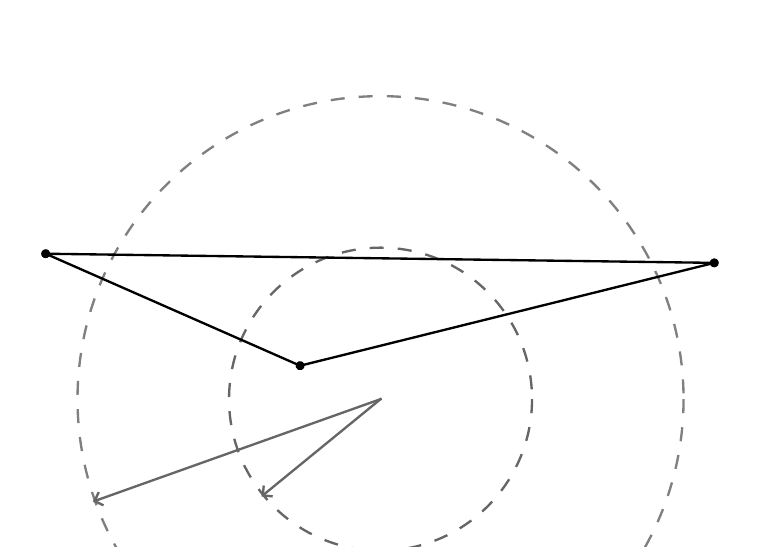

    \caption{The proof of Proposition~\ref{prop:core-closure}.}
    \label{fig:closure}
\end{figure}

Since $a \in \conv(\rho X)$, we have $a = \sum_{i = 0}^n p_i y_i$ for some
$n \geq 0$, points $y_i \in \rho X$, and nonnegative $p_i$ with $\sum
p_i = 1$. Since $z \in B_\epsilon(x)$, we can choose $t \in (0, 1]$ such
that $(1 - t)z + ty_i \in B_\epsilon(x)$ for all $i \in \{0, \ldots, n\}$.

The second property above implies that $[z, y_i] \cap B_\epsilon(x) \sub
X$, and in particular, $(1 - t)z + ty_i \in X$, for each $i$. Hence by the
first property, 
\[
\sum_{i = 0}^n p_i ((1 - t)z + ty_i) \in X,
\]
that is, the point $z' = (1 - t)z + ta$ is in $X$. Moreover, $z, a \in
\overline{B_{\epsilon/2}(x)}$, so $z' \in \overline{B_{\epsilon/2}(x)} \cap
X$. But $z$ was defined to be the point of $\overline{B_{\epsilon/2}(x)} \cap
X$ minimising the distance to $a$, and $d(a, z') = (1 - t) d(a, z)$ with $t
> 0$, so $z = a$.
\end{proof}

\section{Magnitude homology of metric spaces}
\label{sec:mhms}

Here we review the definition of the magnitude homology of a metric space,
first introduced in~\cite{Leinster2021} as a special case of the magnitude
homology of an enriched category.

Let $X = (X, d)$ be a metric space. Write $\R^+$ for the set of nonnegative
real numbers.

\begin{definition}\label{def:mag-ch-cx}
    Let $n \geq 0$. A \demph{proper chain in $X$ of degree $n$} is an $(n +
    1)$-tuple $\bm{x} = (x_0, \ldots,x_n)$ of points in $X$ such that $x_{i
    - 1} \neq x_i$ whenever $1 \leq i \leq n$. Its \demph{length} is
    $d(x_0,x_1) + \cdots + d(x_{n-1},x_n)$. Write $\Pc{n}(X)$ for the set
    of proper chains in $X$ of degree $n$, and for $\ell \in \R^+$, write
    $\PC{n}{\ell}(X) = \{\bm{x} \in \Pc{n}(X) : \bm{x} \text{ has length }
    \ell\}$.
\end{definition}

Let $\Z S$ denote the free abelian group on a set $S$.

\begin{definition}
    For $n \geq 0$ and $\ell \in \R^+$, put $\MC{n}{\ell}(X) = \Z
    \PC{n}{\ell}(X)$. 
    The \demph{magnitude chain complex} of $X$ is an $\R^+$-graded chain complex
    \[\MC{*}{*}(X) = \bigoplus_{\ell \in \R^+} \MC{*}{\ell}(X).\]
    The boundary map
    $\partial_n : \MC{n}{\ell}(X) \to \MC{n-1}{\ell}(X)$ is $\sum_{i=0}^n
    (-1)^i \partial_n^i$, where $\partial_n^i$ is defined on generators by
    \[\partial_n^i(x_0,\ldots,x_n) = \begin{cases}
        (x_0,\ldots,\widehat{x_i},\ldots,x_n) & \text{if }
        (x_0,\ldots,\widehat{x_i},\ldots,x_n) \text{ has length } \ell, \\ 
        0 &\text{otherwise.}
    \end{cases}\]
    Here $(x_0,\ldots,\widehat{x_i},\ldots,x_n)$ denotes the tuple resulting
    from removing the $i$th entry of $(x_0,\ldots,x_n)$. The
    \demph{magnitude homology} $\MH{*}{*}(X)$ of $X$ is the homology of
    $\MC{*}{*}(X)$.
\end{definition}

Recall that maps of metric spaces are by definition short
(Definition~\ref{defn:map}). Any such map $f: X \to Y$ induces a chain map
$f_\# : \MC{*}{*}(X) \to \MC{*}{*}(Y)$, given on the generating set
$\PC{n}{\ell}(X)$ by
\[f_\#(x_0,\ldots,x_n) = \begin{cases}
 (f(x_0), \ldots, f(x_n)) &\text{if } (f(x_0), \ldots, f(x_n)) \in \PC{n}{\ell}(Y), \\
 0 &\text{otherwise.}
\end{cases}\]
In turn, $f_\#$ induces a map $f_* : \MH{*}{*}(X) \to \MH{*}{*}(Y)$ in
homology. Thus, $\MC{*}{*}$ is a functor from the category $\Met$ of metric
spaces and their maps to the category of $\R^+$-graded chain complexes of
abelian groups, and $\MH{*}{*}$ is a functor from $\Met$ to $(\N \times
\R^+)$-graded abelian groups.

\begin{remark}\label{rmk:0_homology}
    Some trivial cases are easily described. Clearly $\MH{n}{0}(X) = 0$
    for all $n \geq 1$. Also, the boundary map $\partial_1 :
    \MC{1}{\ell}(X) \to \MC{0}{\ell}(X)$ is zero for any $\ell \in \R^+$,
    so 
    \[\MH{0}{\ell}(X) = \begin{cases}
        \Z X &\text{if } \ell = 0,\\
        0 &\text{otherwise.}
    \end{cases}\]
\end{remark}

With only a little more effort, one shows that $\MH{1}{\ell}(X)$ is the
free abelian group on the set of ordered pairs of adjacent points distance
$\ell$ apart (Corollary~4.5 of~\cite{Leinster2021}). In particular, a
closed subset of $\R^N$ has trivial first magnitude homology if and only if
it is convex.

The higher magnitude homology groups are more subtle. Even in the case of
graphs, seen as metric spaces as in Section~\ref{sec:aligned}, the
magnitude homology groups can have torsion (Corollary~5.12(3) of Kaneta and
Yoshinaga~\cite{Kaneta2021}). Going further, Sazdanovic and Summers showed
that every finitely generated abelian group arises as a subgroup of some
magnitude homology group of some graph (Theorem~3.14 of~\cite{SaSu}).

\section{Magnitude homology of aligned spaces}
\label{sec:mhst}

When a space is aligned, its higher magnitude homology groups have a simple
description similar to that of $\MH{1}{*}$ above. More exactly, Kaneta and
Yoshinaga showed that all the magnitude homology groups of an aligned space
are free, and they identified a basis, as follows.

\begin{definition}
Let $X$ be a metric space, $n \geq 0$ and $\ell \in \R^+$.  A \demph{thin
chain} of degree $n$ and length $\ell$ is a proper chain $\bm{x} \in
\PC{n}{\ell}(X)$ such that $x_{i - 1}$ is adjacent to $x_i$ for all $i \in
\{1, \ldots, n\}$ and $x_i \not\in [x_{i - 1}, x_{i + 1}]$ for all $i \in
\{1, \ldots, n - 1\}$. Write 
    \[\THIN{n}{\ell}(X) = \{ \bm{x} \in \PC{n}{\ell}(X) : \bm{x} \text{ is
    thin}\}.\] 
\end{definition}

Any thin chain is a cycle, so there is a map of sets $\THIN{n}{\ell}(X) \to
\MH{n}{\ell}(X)$ assigning to each thin chain its homology class. This map
extends uniquely to a homomorphism $\Z \THIN{n}{\ell}(X) \to
\MH{n}{\ell}(X)$.

Kaneta and Yoshinaga proved that when $X$ is aligned, this canonical
homomorphism is an isomorphism (Theorem~5.2 of~\cite{Kaneta2021}). Thus,
the magnitude homology groups of an aligned space are freely generated by
the thin chains. 

In fact, they proved this under the more careful hypotheses
that $X$ is geodetic and has no 4-cuts at certain length scales. By
Proposition~\ref{prop:aligned-eqv}, the cruder assumption of alignedness
suffices. Although some of our results hold under more careful hypotheses
too, we assume alignedness throughout in order to simplify the exposition.

The isomorphism $\Z \THIN{n}{\ell} \cong \MH{n}{\ell}$ is natural in the
following sense. A map $f: X \to Y$ induces a homomorphism $f_\star: \Z
\THIN{n}{\ell}(X) \to \Z \THIN{n}{\ell}(Y)$, defined on generators by
\[
f_\star(x_0, \ldots, x_n) 
=
\begin{cases}
(f(x_0), \ldots, f(x_n))        &
\text{if } (f(x_0), \ldots, f(x_n)) \in \THIN{n}{\ell}(Y),      \\
0       &
\text{otherwise}.
\end{cases}
\]
In this way, $\Z\THIN{n}{\ell}$ becomes a functor from aligned spaces to
abelian groups.

\begin{theorem}[Kaneta and Yoshinaga]
\label{thm:thin_chains}
Let $n \geq 0$ and $\ell \in \R^+$. For aligned spaces $X$, the canonical
homomorphism $\Z\THIN{n}{\ell}(X) \to \MH{n}{\ell}(X)$ is an isomorphism,
natural in $X$.
\end{theorem}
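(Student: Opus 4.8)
The plan is to deduce the theorem from the cited result of Kaneta and Yoshinaga (Theorem~5.2 of~\cite{Kaneta2021}), which establishes the same isomorphism under the hypotheses that $X$ is geodetic and has no $4$-cuts. Since Proposition~\ref{prop:aligned-eqv} shows that alignedness is \emph{equivalent} to the conjunction of these two conditions, every aligned space falls within the scope of their theorem, and the canonical homomorphism $\Z\THIN{n}{\ell}(X) \to \MH{n}{\ell}(X)$ is an isomorphism. So the only thing genuinely left to check is the naturality clause: that for a map $f: X \to Y$ of aligned spaces, the square relating $f_\star: \Z\THIN{n}{\ell}(X) \to \Z\THIN{n}{\ell}(Y)$ to $f_*: \MH{n}{\ell}(X) \to \MH{n}{\ell}(Y)$ commutes.

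For naturality, I would argue directly on the level of chains. Fix a thin chain $\bm{x} = (x_0, \ldots, x_n) \in \THIN{n}{\ell}(X)$; since thin chains are cycles, $\bm{x}$ represents a class in $\MH{n}{\ell}(X)$, and the canonical map sends the generator $\bm{x} \in \Z\THIN{n}{\ell}(X)$ to this class. Going around the square one way, $f_\star(\bm{x})$ is either $(f(x_0), \ldots, f(x_n))$, if this tuple is again a thin chain of length $\ell$ in $Y$, or $0$ otherwise; the canonical map for $Y$ then sends this to the corresponding homology class (or to $0$). Going the other way, $f_*$ applied to the class of $\bm{x}$ is the class of $f_\#(\bm{x})$, which is $(f(x_0), \ldots, f(x_n))$ if that tuple is a proper chain of length $\ell$, and $0$ otherwise. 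So the two composites agree as soon as one knows: (a) if $(f(x_0), \ldots, f(x_n))$ is a thin chain of length $\ell$, then the homology class it represents coincides with $f_*[\bm{x}]$—this is immediate since a thin chain represents itself as a cycle; and (b) if $(f(x_0), \ldots, f(x_n))$ fails to be a thin chain, then its image in $\MH{n}{\ell}(Y)$ under $f_*$ is zero.

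The crux is therefore part~(b), and it splits according to how thinness fails. Because $f$ is distance-decreasing, $f_\#(\bm{x})$ is either zero (if some $f(x_{i-1}) = f(x_i)$, or if the length drops below $\ell$) or it is an honest proper chain $(f(x_0), \ldots, f(x_n)) \in \PC{n}{\ell}(Y)$ in which consecutive points are at the same distances as in $\bm{x}$ but which may fail to be thin because either some $f(x_{i-1}), f(x_i)$ are no longer adjacent, or some $f(x_i) \in [f(x_{i-1}), f(x_{i+1})]$. In the first subcase $f_\#(\bm{x}) = 0$, so its class is certainly $0$. In the remaining subcase, I expect the cleanest route is to invoke the Kaneta–Yoshinaga isomorphism for $Y$: since $\MH{n}{\ell}(Y)$ is freely generated by $\THIN{n}{\ell}(Y)$, it suffices to express the cycle $f_\#(\bm{x})$ as a boundary, or more efficiently, to note that the canonical map $\Z\THIN{n}{\ell}(Y) \to \MH{n}{\ell}(Y)$ being an isomorphism means the homology class of \emph{any} cycle is determined by writing that cycle modulo boundaries in terms of thin generators; one then checks that a non-thin $f_\#(\bm{x})$ reduces to $0$ under the same combinatorial straightening (using alignedness of $Y$ to rewrite intervals, exactly as in the proof of Theorem~5.2) that Kaneta and Yoshinaga use. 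I anticipate this reduction is the main obstacle, since it requires unwinding their normalization procedure rather than quoting it as a black box; the alternative, and perhaps preferable, is simply to remark that naturality of the canonical homomorphism at the chain-complex level is already implicit in their construction and cite it, treating the above as the verification that their chain-level comparison map is exactly our $f_\star$.
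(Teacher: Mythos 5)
Your plan is the same as the paper's: cite Kaneta and Yoshinaga's Theorem~5.2 for the main isomorphism (which applies because of Proposition~\ref{prop:aligned-eqv}), and then verify the naturality clause, which the paper simply calls ``readily checked.'' Your analysis of where naturality could fail is right in outline, but your final paragraph overestimates the difficulty: the step you anticipate as the main obstacle is a direct boundary computation, and it requires neither appealing to the Kaneta--Yoshinaga isomorphism for $Y$ nor unwinding their normalisation procedure.

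Two specific points. First, of the two failure modes you list for $(f(x_0), \ldots, f(x_n))$ to be a non-thin proper chain of length $\ell$, the second one (some $f(x_i) \in [f(x_{i-1}), f(x_{i+1})]$) cannot occur. Length preservation together with $f$ being distance-decreasing forces $d(f(x_{i-1}), f(x_i)) = d(x_{i-1}, x_i)$ for every $i$; so $f(x_i) \in [f(x_{i-1}), f(x_{i+1})]$ would give
\[
d(f(x_{i-1}), f(x_{i+1})) = d(x_{i-1}, x_i) + d(x_i, x_{i+1}) > d(x_{i-1}, x_{i+1}) \geq d(f(x_{i-1}), f(x_{i+1})),
\]
using that $x_i \notin [x_{i-1}, x_{i+1}]$ because $\bm{x}$ is thin. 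Hence the only possible failure is loss of adjacency at some index $j$, and moreover $\bm{y} = (f(x_0), \ldots, f(x_n))$ still satisfies $y_k \notin [y_{k-1}, y_{k+1}]$ for all $0 < k < n$. Second, such a $\bm{y}$ bounds by a one-step insertion: choose $z$ with $y_{j-1} \prec z \prec y_j$ and set $\bm{y}' = (y_0, \ldots, y_{j-1}, z, y_j, \ldots, y_n) \in \MC{n+1}{\ell}(Y)$. The face deleting $z$ returns $\bm{y}$; the two outer faces drop the length; every inner face deleting a $y_k$ with $k \neq j-1, j$ vanishes by the property $y_k \notin [y_{k-1}, y_{k+1}]$ just noted; and the faces deleting $y_{j-1}$ or $y_j$ vanish because, by the no-4-cuts property of the aligned space $Y$, their nonvanishing would force $y_{j-1} \in [y_{j-2}, y_j]$ or $y_j \in [y_{j-1}, y_{j+1}]$ respectively, again a contradiction. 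Hence $\partial_{n+1}(\bm{y}') = (-1)^j \bm{y}$ and $[\bm{y}] = 0$, which is exactly what the naturality square requires.
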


\begin{proof}
The main statement follows from Theorem~5.2 of~\cite{Kaneta2021}, using
Proposition~\ref{prop:aligned-eqv} above. The naturality is not stated
explicitly in~\cite{Kaneta2021}, but is readily checked.
\end{proof}

For $n \in \{0, 1\}$, Theorem~\ref{thm:thin_chains} reproduces
the descriptions of $\MH{0}{*}$ and $\MH{1}{*}$ at the end of
Section~\ref{sec:mhms}, which do not require alignedness.

For any thin chain $(x_0, \ldots, x_n)$ with $n \geq 1$, the points $x_i$
are all in the inner boundary $\rho X$. Hence the natural isomorphism of
Theorem~\ref{thm:thin_chains} gives:

\begin{corollary}
\label{cor:parallel-ib}
Let $f, g: X \to Y$ be maps of aligned spaces. Let $n \geq 1$ and $\ell
\in \R^+$. If $f(x) = g(x)$ for all $x \in \rho X$ then $f_* = g_*:
\MH{n}{\ell}(X) \to \MH{n}{\ell}(Y)$.
\end{corollary}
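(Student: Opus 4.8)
The plan is to reduce everything to the structure theorem of Kaneta and Yoshinaga (Theorem~\ref{thm:thin_chains}). By that theorem, the canonical homomorphism $\Z\THIN{n}{\ell}(X) \to \MH{n}{\ell}(X)$ is an isomorphism, natural in $X$. So, writing $\phi_X \colon \Z\THIN{n}{\ell}(X) \xrightarrow{\ \sim\ } \MH{n}{\ell}(X)$ for this isomorphism, naturality gives a commuting square relating $f_*, g_* \colon \MH{n}{\ell}(X) \to \MH{n}{\ell}(Y)$ to $f_\star, g_\star \colon \Z\THIN{n}{\ell}(X) \to \Z\THIN{n}{\ell}(Y)$. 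Since $\phi_X$ is an isomorphism, it therefore suffices to prove that $f_\star = g_\star$ on $\Z\THIN{n}{\ell}(X)$, and for that it is enough to check it on the generating set $\THIN{n}{\ell}(X)$.

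The key observation is the one flagged just before the corollary: if $(x_0, \ldots, x_n)$ is a thin chain of degree $n \geq 1$, then every $x_i$ lies in $\rho X$. Indeed, for $n \geq 1$ each $x_i$ is by definition adjacent either to $x_{i-1}$ or to $x_{i+1}$ (at least one of these neighbours exists since $n \geq 1$), so $x_i$ is adjacent to some point of $X$ and hence $x_i \in \rho X$. Now apply the hypothesis $f(x) = g(x)$ for all $x \in \rho X$: we get $f(x_i) = g(x_i)$ for all $i$, so $(f(x_0), \ldots, f(x_n)) = (g(x_0), \ldots, g(x_n))$ as tuples. Consequently the two cases in the definition of $f_\star$ and of $g_\star$ coincide — either both tuples are thin chains in $Y$ of length $\ell$ (and then $f_\star$ and $g_\star$ both send the generator to that same tuple) or neither is (and then both send it to $0$). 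Either way $f_\star(x_0,\ldots,x_n) = g_\star(x_0,\ldots,x_n)$, so $f_\star = g_\star$.

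Putting these together: $g_* = \phi_Y \circ g_\star \circ \phi_X^{-1} = \phi_Y \circ f_\star \circ \phi_X^{-1} = f_*$ on $\MH{n}{\ell}(X)$, which is the claim. I do not expect any real obstacle here; the only point requiring a moment's care is the small argument that thin chains of positive degree live entirely in the inner boundary, and the observation that agreement of the underlying tuples forces the two clauses of the case distinction in $f_\star$ and $g_\star$ to align, so that no separate argument is needed for the ``degenerate'' case where the image tuple fails to be thin.
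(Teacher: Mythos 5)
Your proof is correct and takes essentially the same route the paper does: the paper's entire argument is the single remark preceding the corollary, that thin chains of degree $n \geq 1$ live entirely in $\rho X$, combined with the naturality in Theorem~\ref{thm:thin_chains}. You have simply spelled out the details that the paper leaves implicit, including the harmless observation that the case distinction in $f_\star$ and $g_\star$ is determined by the image tuple alone.
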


A thin chain in degree $0$ is just a point, not necessarily in the inner
boundary. By Remark~\ref{rmk:0_homology}, the maps $f_*, g_*: \MH{0}{*}(X)
\to \MH{0}{*}(Y)$ are only equal when $f = g$.

Corollary~\ref{cor:parallel-ib} suggests an analogy: perhaps the condition
that $f|_{\rho X} = g|_{\rho X}$ plays a similar role for magnitude
homology of metric spaces as homotopy between maps plays for ordinary
homology of topological spaces. We digress briefly (until
Remark~\ref{rmk:ib-2cat}) to develop this idea, proving a stronger version
of Corollary~\ref{cor:parallel-ib}.

\begin{theorem}\label{thm:homotopic}
    Let $f, g : X \to Y$ be maps of metric spaces, with $X$ aligned. Let
    $\ell \in \R^+$. If $f(x) = g(x)$ for all $x \in \rho X$, then $f_\#,
    g_\# : \MC{*}{\ell}(X) \to \MC{*}{\ell}(Y)$ are chain homotopic in
    positive degree.
\end{theorem}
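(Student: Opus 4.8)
The goal is to construct an explicit chain homotopy $h_n\colon \MC{n}{\ell}(X) \to \MC{n+1}{\ell}(Y)$ for $n \geq 1$ (or $n \geq 0$ with some care about what happens in degree $0$, since the claim is only about positive degree) satisfying $\partial h + h \partial = g_\# - f_\#$. The natural first move is to recall how one proves that two naturally homotopic simplicial maps induce chain-homotopic maps, or equivalently to use the classical "prism" formula. Given a proper chain $\bm{x} = (x_0, \ldots, x_n)$, the candidate is the alternating sum
\[
h_n(x_0, \ldots, x_n) = \sum_{i=0}^n (-1)^i \bigl( f(x_0), \ldots, f(x_i), g(x_i), \ldots, g(x_n) \bigr),
\]
where, as with $\partial$ and $f_\#$, each term is interpreted as $0$ unless it is a proper chain of length exactly $\ell$. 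The key point making this work in our setting is the hypothesis $f|_{\rho X} = g|_{\rho X}$ together with the structure of aligned spaces.

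First I would check that $h_n$ lands in the right length-graded piece on the terms that survive: if $(x_0,\ldots,x_n)$ has length $\ell$, then since $f$ and $g$ are distance-decreasing, $(f(x_0),\ldots,f(x_i),g(x_i),\ldots,g(x_n))$ has length at most $\ell$, and we keep it only when it equals $\ell$ — so $h_n$ is well-defined and length-preserving by fiat. Then I would carry out the standard prism computation for $\partial h + h \partial$. Expanding $\partial_{n+1} h_n$, the terms where the removed vertex is not one of the two "middle" copies $f(x_i), g(x_i)$ reassemble (after reindexing) into $-h_{n-1}\partial_n$ up to sign, exactly as in the topological case; the terms removing $f(x_i)$ or $g(x_i)$ telescope, leaving the "diagonal" boundary terms
\[
\sum_i \bigl( f(x_0),\ldots,f(x_{i-1}), g(x_{i-1}),\ldots \bigr) \text{-type cancellations},
\]
and at the two ends one gets $(g(x_0),\ldots,g(x_n)) - (f(x_0),\ldots,f(x_n))$, i.e.\ $g_\# - f_\#$. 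This is a purely formal manipulation once the degeneracy/length bookkeeping is under control, so I would present it compactly rather than in full.

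The genuine obstacle — and the place where alignedness and the inner-boundary hypothesis must be used — is the bookkeeping with the "length $\ell$ or else $0$" convention: in the magnitude chain complex, a face map can kill a term, and the telescoping in the prism argument only works if the terms that are "supposed" to cancel are killed in matching pairs. Concretely, in a surviving summand $(f(x_0),\ldots,f(x_i),g(x_i),\ldots,g(x_n))$ one needs: (a) consecutive entries to be distinct (properness), and (b) that collapsing $f(x_i)$ and $g(x_i)$ — or an adjacent pair straddling the diagonal — behaves predictably with respect to length. When $x_i \in \rho X$ we have $f(x_i) = g(x_i)$, so that entry is genuinely degenerate and the corresponding prism term vanishes, matching what happens topologically; the subtlety is the interior vertices $x_i$ ($0 < i < n$) of a chain that may \emph{not} lie in $\rho X$. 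Here I would invoke Theorem~\ref{thm:thin_chains} indirectly, or more precisely argue that for the homotopy identity it suffices to check it on a generating set, and then use alignedness (via Lemma~\ref{lemma:int-sub} and Lemma~\ref{lemma:intervals-embed}) to control when the intermediate chains $(f(x_0),\ldots,g(x_n))$ have length exactly $\ell$ versus strictly less. I expect that the cleanest route is: prove that whenever a term $(f(x_0),\ldots,f(x_i),g(x_i),\ldots,g(x_n))$ has length $\ell$, then in fact $f(x_j) = g(x_j)$ is forced for a controlled range of $j$, or that the relevant faces all have length $\ell$ simultaneously — reducing the identity to the ordinary simplicial prism identity where everything is automatic. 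Verifying this compatibility is the crux; once it is in place, $\partial h + h\partial = g_\# - f_\#$ in degrees $\geq 1$ follows, and Corollary~\ref{cor:parallel-ib} is recovered by passing to homology.
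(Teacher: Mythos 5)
Your proposed prism operator
\[
h_n(x_0,\ldots,x_n) = \sum_{i=0}^n (-1)^i\bigl(f(x_0),\ldots,f(x_i),g(x_i),\ldots,g(x_n)\bigr)
\]
is not a chain homotopy in the magnitude chain complex, and the failure is not one that further bookkeeping can repair: the identity already fails on a single generator. Take $X = [0,1]\cup\{2\}\subseteq\R$, so that $\rho X = \{1,2\}$, and take $Y=\R$, $f$ the inclusion, and $g\colon X\to\R$ with $g|_{[0,1]}\equiv 1$ and $g(2)=2$. Both $f$ and $g$ are distance-decreasing and agree on $\rho X$. On $\bm{x}=(0,1)\in \PC{1}{1}(X)$, both prism terms equal $(0,1,1)$, which is not a proper chain, so $h_1(\bm{x})=0$; and $\partial_1=0$ on $\MC{1}{1}(X)$. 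Hence $(\partial h + h\partial)(\bm{x})=0$, while $(g_\#-f_\#)(\bm{x})=0-(0,1)=-(0,1)\neq 0$. (Your preliminary claim that each prism term has length at most $\ell$ is also false: the length includes the extra summand $d(f(x_i),g(x_i))$, which has no bound in terms of $\ell$ --- though that point alone would not be fatal, since such terms are simply discarded.)

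The issue you flag as the crux --- that the ``length $\ell$ or else $0$'' truncation breaks the telescoping --- is exactly right, but the resolution is not to salvage the prism formula; it is to build the homotopy from different data. The paper uses the \emph{frame} of a proper chain (the subsequence of singular indices): one fixes, once and for all, a choice of intermediate point $xx'$ in each interval $(x,x')$ with $x,x'$ non-adjacent; given $\bm{x}$, one locates the first consecutive pair of frame points that are not adjacent in $X$, inserts the chosen intermediate point at the unique slot $h$ permitted by alignedness, obtaining $\bm{x}'$ with $\partial^h_{n+1}\bm{x}'=\bm{x}$, and sets $\phi(\bm{x})=(-1)^h(g_\#-f_\#)(\bm{x}')$; if no such frame pair exists then $\bm{x}$ equals its frame, all its entries lie in $\rho X$, $(g_\#-f_\#)(\bm{x})=0$, and one sets $\phi(\bm{x})=0$. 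The verification rests on Lemma~\ref{lemma:same-frame} (taking the face at a smooth index preserves the frame) rather than on prism-style cancellation, and the hypothesis $f|_{\rho X}=g|_{\rho X}$ enters structurally through the base case, not as a patch on a formula that ignores it. (Your instinct to invoke Theorem~\ref{thm:thin_chains} does lead somewhere, but via a different, non-explicit route: freeness of $C_{*,\ell}$ and $H_{*,\ell}$ forces a chain homotopy equivalence $H_{*,\ell}\simeq C_{*,\ell}$ by general homological algebra, which the paper records as an alternative proof.)
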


To prove Theorem~\ref{thm:homotopic}, we use Kaneta and Yoshinaga's
notion of frame (Definition 3.3 of~\cite{Kaneta2021}). 

\begin{definition}
    For $0 \leq i \leq n$, a proper chain $\bm{x} = (x_0,\ldots,x_n) \in
    \Pc{n}(X)$ is \demph{smooth} at $i$ if $0 < i < n$ and $x_{i-1} \prec
    x_i \prec x_{i+1}$; otherwise, it is \demph{singular} at $i$. Writing
    $0 = i_0 < i_1 < \cdots < i_k = n$ for the indices at which $\bm{x}$ is
    singular, the \demph{frame} of $\bm{x}$ is $(x_{i_0}, x_{i_1}, \ldots,
    x_{i_k})$.
\end{definition}

Recall that $\partial_n^i(\bm{x}) = (x_0, \ldots, \widehat{x_i}, \ldots,
x_n)$ when $\bm{x}$ is smooth at $i$.

\begin{lemma}
\label{lemma:same-frame}
Let $X$ be an aligned space, let $\bm{x} = (x_0, \ldots, x_n) \in
\Pc{n}(X)$, and let $i \in \{0, \ldots, n\}$. If $\bm{x}$ is smooth at $i$
then $\partial_n^i(\bm{x})$ and $\bm{x}$ have the same frame.
\end{lemma}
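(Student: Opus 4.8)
The idea is to track, position by position, which vertices are singular, and verify that deleting the smooth vertex $x_i$ changes none of these answers. First note that $\partial_n^i(\bm{x}) = (x_0, \ldots, \widehat{x_i}, \ldots, x_n)$ is genuinely a proper chain, so that its frame is defined: smoothness of $\bm{x}$ at $i$ gives $x_{i-1} \prec x_i \prec x_{i+1}$, so $d(x_{i-1}, x_{i+1}) = d(x_{i-1}, x_i) + d(x_i, x_{i+1}) > 0$ and hence $x_{i-1} \neq x_{i+1}$. The vertices of $\partial_n^i(\bm{x})$ are those of $\bm{x}$ with $x_i$ removed (a vertex in position $j < i$ keeps position $j$, and one in position $j > i$ moves to position $j - 1$), and $i$ is a smooth, hence non-singular, index of $\bm{x}$. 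So it suffices to show: for each $j \in \{0, \ldots, n\} \setminus \{i\}$, the vertex $x_j$ occupies a singular position of $\bm{x}$ if and only if it occupies a singular position of $\partial_n^i(\bm{x})$. Since removing $x_i$ preserves the relative order of the surviving vertices, this identifies the two frames as the same tuple.

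For $j \in \{0, n\}$ both positions in question are endpoints, hence singular, so there is nothing to check. For $0 < j < n$ with $|i - j| \geq 2$, the vertex $x_j$ together with its two chain-neighbours is literally unchanged by the deletion, so the two smoothness conditions are the same assertion. It remains to treat $j = i - 1$ and $j = i + 1$, which are mirror images of one another (reverse the chain); I do $j = i + 1$, and may assume $i \leq n - 2$ since otherwise $j = n$, already handled. Write $a = x_{i-1}$, $b = x_i$, $c = x_{i+1}$, $d = x_{i+2}$. By smoothness of $\bm{x}$ at $i$ we have $a \prec b \prec c$, and we must prove
\[
b \prec c \prec d \quad\Longleftrightarrow\quad a \prec c \prec d,
\]
the left side being smoothness of $\bm{x}$ at $i + 1$ and the right side smoothness of $\partial_n^i(\bm{x})$ at position $i$.

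For the forward direction, if $b \prec c \prec d$ then $(a, b, c, d)$ satisfies the hypotheses of Definition~\ref{def:aligned}, so $[a, d] = [a, b] \cup [b, c] \cup [c, d]$; in particular $c \in [a, d]$, i.e.\ $a \preq c \preq d$, and strictness holds since $d(a, c) = d(a, b) + d(b, c) > 0$. For the converse, if $a \prec c \prec d$ then $b \in [a, c] \sub [a, d]$ by Lemma~\ref{lemma:int-sub}, so $a, b, c, d$ all lie in $[a, d]$, on which $\phi = d(a, -)$ is an isometry into $\R$ by Lemma~\ref{lemma:intervals-embed}. From $a \preq b \preq c$ and $a \preq c \preq d$ we read off $0 = \phi(a) \leq \phi(b) \leq \phi(c) \leq \phi(d)$; since $\phi$ is distance-preserving on $[a, d]$ this forces $d(b, c) + d(c, d) = d(b, d)$, i.e.\ $b \preq c \preq d$, with strictness again immediate. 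The case $j = i - 1$ runs the identical pair of arguments on $a = x_{i-2}$, $b = x_{i-1}$, $c = x_i$, $d = x_{i+1}$.

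The only step that really uses alignedness — and the one I would expect to require the most care — is the converse direction just above: it is precisely what rules out the a priori possibility that deleting the smooth vertex $x_i$ leaves a neighbouring stretch of the chain no longer straight. Everything else is bookkeeping with positions, together with the already-noted fact that $[x_0, x_2] = [x_0, x_1] \cup [x_1, x_2]$ whenever $x_0 \preq x_1 \preq x_2$ in an aligned space.
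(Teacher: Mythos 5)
Your proof is correct. Note that the paper does not actually write out an argument here: it simply points the reader to Proposition~3.7 of Kaneta and Yoshinaga and remarks that a similar elementary argument works. Your write-up supplies precisely the kind of elementary bookkeeping the paper is alluding to. The reduction to positions $i \pm 1$ is clean, and the two directions at $j = i + 1$ are handled with exactly the right tools: the forward direction is the definition of alignedness applied to $(a,b,c,d)$, and the converse is Lemma~\ref{lemma:intervals-embed} applied to the four points inside $[a,d]$, read off on the real line. (One microscopic point you leave implicit: in the converse, strictness of $b \prec c \prec d$ also needs $c \neq d$, which comes from $a \prec c \prec d$, and $b \neq c$, which is properness of $\bm{x}$; both are immediate, but worth a word.) The $j = i - 1$ case is indeed the mirror image once one uses $d(x_{i+1}, -)$ in place of $d(x_{i-2}, -)$ on the relevant interval.
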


\begin{proof}
This is proved by an elementary argument similar to the proof of
Proposition~3.7 of Kaneta and Yoshinaga~\cite{Kaneta2021}.
\end{proof}

\begin{proof}[Proof of Theorem~\ref{thm:homotopic}]
    We will define $\phi : \MC{n}{\ell}(X) \to
    \MC{n+1}{\ell}(Y)$ for each $n \geq 0$ in such a way that
    \[
    g_\# - f_\# = \partial \phi + \phi \partial: 
    \MC{n}{\ell}(X) \to \MC{n}{\ell}(Y) 
    \]
    for each $n \geq 1$. 

    To this end, for each pair of distinct non-adjacent points $x,x' \in
    X$, choose a point $xx'$ in the interval $(x,x')$. Let $n \geq 0$ and
    let $\bm{x} = (x_0,\ldots,x_n)$ be a proper chain, with frame $(x_0 =
    x_{i_0}, x_{i_1}, \ldots, x_{i_k} = x_n)$. If there is some $r \in
    \{1,\ldots,k\}$ such that $x_{i_{r-1}}$ is not adjacent to $x_{i_r}$,
    take the smallest such $r$. Then, by alignedness, there is a unique
    index $i_{r-1} < h \leq i_r$ such that $x_{i_{r-1}}x_{i_r} \in
    (x_{h-1},x_h)$, and we put
    \[\bm{x}' = (x_0,\ldots,x_{h-1}, x_{i_{r-1}}x_{i_r},x_h,\ldots,x_n).\]
    Otherwise, put $\bm{x}' = 0$. Finally, set $\phi(\bm{x}) = (-1)^h(g_\#
    - f_\#)(\bm{x}')$, which defines $\phi$ on the generators of
    $\MC{n}{\ell}(X)$.

    By Lemma~\ref{lemma:same-frame}, all the nonzero terms
    $\partial_n^j(\bm{x})$ in the sum $\partial(\bm{x}) = \sum (-1)^j
    \partial_n^j(\bm{x})$ have the same frame as $\bm{x}$. Using this, one
    finds that if the condition for $\bm{x}'$ to be nonzero holds, then the
    same is true for $\partial^j_n(\bm{x})$ whenever $\partial^j_n(\bm{x})$
    is nonzero. In any case, 
    \[(\partial^j_n(\bm{x}))' = \begin{cases}
        \partial^j_{n+1}(\bm{x}') & \text{if } j < h, \\
        \partial^{j+1}_{n+1}(\bm{x}') & \text{otherwise},
        \end{cases}\]
    where we make the convention that $0' = 0$. In turn, this implies that
    \begin{equation}\label{eq:phi_bdd}
    \phi(\partial^j_n(\bm{x})) = \begin{cases}
        - \partial^j_{n+1} \phi(\bm{x}) & \text{if } j < h, \\
        \partial^{j+1}_{n+1}\phi(\bm{x}) & \text{otherwise.}
    \end{cases}
    \end{equation}

    To prove that $\phi$ is a chain homotopy, it suffices to show that
    \begin{equation}\label{eq:chain_homotopy}
        (g_\# - f_\#)(\bm{x}) = (\partial \phi + \phi \partial)(\bm{x}).
    \end{equation}
    If $\bm{x}' = 0$, then each component in the frame of $\bm{x}$ is adjacent to the next one, so $\bm{x}$ must equal its frame, and all the components of $\bm{x}$ are in $\rho X$. In this case, it is clear that both sides of \eqref{eq:chain_homotopy} are zero. The case where $\bm{x}'$ is nonzero is a routine calculation using \eqref{eq:phi_bdd} and the fact that $\partial^h_{n+1}(\bm{x}') = \bm{x}$.
\end{proof}

\begin{remark}
As an alternative proof, the existence of such a chain homotopy also
follows from Theorem~\ref{thm:thin_chains} and some general homological
algebra. Let $C$ be a chain complex of free abelian groups with free
homology, and $Z_n = \ker \partial_n \subseteq C_n$, $B_n = \im
\partial_{n+1} \subseteq C_n$ and $H_n = Z_n / B_n$ as usual. The $H_n$
assemble into a complex $H$ with trivial differentials. If $s_n : H_n \to
Z_n$ is a section of the quotient map $q_n : Z_n \to H_n$ for each $n$,
then the composites
\setlength\mathsurround{0pt}
\begin{equation}\label{eq:sec_equiv}
    \begin{tikzcd}
    H_n \ar[r, "s_n"] & Z_n \ar[r, hook] & C_n
    \end{tikzcd}
\end{equation}
\setlength\mathsurround{0.8pt}
form a chain homotopy equivalence $H \simeq C$.

To see this, note that since a subgroup of a free abelian group is free,
all of $C_n$, $Z_n$, $B_n$ and $H_n$ are free. In particular, $B_{n - 1}$
is free, so the short exact sequence
\setlength\mathsurround{0pt}
\[\begin{tikzcd}
0 \rar & Z_n \rar & C_n \rar{\partial} & B_{n-1} \rar & 0
\end{tikzcd}\]
\setlength\mathsurround{0.8pt}
splits, giving $C_n \cong Z_n \oplus B_{n-1}$ for each $n$. It follows that
$C$ is the direct sum of complexes of the form
\setlength\mathsurround{0pt}
\[\begin{tikzcd}
0 \ar[r] & B_n \ar[r, hook] & Z_n \rar & 0.
\end{tikzcd}\]
\setlength\mathsurround{0.8pt}
Truncating this complex at $Z_n$ gives a free resolution of $H_n$. Since
$H_n$ is free, it is also a free resolution of itself. The maps $q_n$ and
$s_n$ give lifts of the identity on $H_n$ to chain maps between these two
resolutions, and are then inverse homotopy equivalences by the uniqueness
up to chain homotopy of such lifts. Composing the maps $s_n$ with the
inclusions $Z_n \hookrightarrow C_n$ and then taking the direct sum over
all $n$ gives the desired chain homotopy equivalence $H \simeq C$.

For an aligned space $X$, Theorem~\ref{thm:thin_chains} implies that
$H_{*,\ell}(X)$ is free. We may take $s_n$ to be the map that sends each
thin chain to itself, giving a chain homotopy equivalence $\psi :
H_{*,\ell}(X) \to C_{*,\ell}(X)$ as in~\eqref{eq:sec_equiv}, with homotopy
inverse $\phi$. Now if $f,g : X \to Y$ are equal on $\rho X$ then $f_\#
\psi = g_\# \psi$ in positive degree, and so
\[f_\# \simeq f_\# \psi \phi = g_\# \psi \phi \simeq g_\#.\]
\end{remark}

\begin{remark}
\label{rmk:ib-2cat}
Here we return to the analogy between, on the one hand, the condition that
two maps $\parpairu{X}{Y}$ of metric spaces agree on the inner boundary
of $X$, and, on the other, the condition that two maps $\parpairu{X}{Y}$ of
topological spaces are homotopic. The analogy is only loose, since
equality on the inner boundary is not stable under 2-categorical
composition, unlike topological homotopy. Indeed, there are examples of
maps of metric spaces
\setlength\mathsurround{0pt}
\[\begin{tikzcd}
X \ar[r, "f"] & Y \ar[r, shift left, "g"] \ar[r, shift right, "g'"'] & Z 
\end{tikzcd}\]
\setlength\mathsurround{0.8pt}
such that $g|_{\rho Y} = g'|_{\rho Y}$ but $(gf)|_{\rho X} \neq
(g'f)|_{\rho X}$. (Take $f$ to be the inclusion $\{0, 1\} \hookrightarrow
[0, 1]$ and $g, g': [0, 1] \to \R$ to be any two maps that differ at $0$ or
$1$.) However, by Theorem~\ref{thm:homotopic}, it is still the case that
$(gf)_\#$ and $(g'f)_\#$ are chain homotopic in positive degree.

It is an open problem to find a compact description of the equivalence
relation on maps of metric spaces that is generated by equality on the
inner boundary and closed under 2-categorical composition. Two maps that
are equivalent in this sense are guaranteed by Theorem~\ref{thm:homotopic}
to induce chain homotopic chain maps in positive degree.

Different connections between homotopy and magnitude homology have been
investigated by Tajima and Yoshinaga~\cite{TaYo}.
\end{remark}

This ends the digression on homotopy, and we return to the question of when
two maps between aligned spaces induce the same map on homology in positive
degree. From here on, we often suppress the length index $\ell$, writing
$\Mh{n}(X)$ for the $\R^+$-graded abelian group $\bigoplus_{\ell \in \R^+}
\MH{n}{\ell}(X)$.

\begin{proposition}\label{prop:positive_tfae}
    Let $e : X \to X$ be an endomorphism of an aligned metric space. The following are equivalent:
    \begin{enumerate}
        \item \label{part:ptfae-all}
        $e_*: \Mh{n}(X) \to \Mh{n}(X)$ is the identity for all $n
        \geq 1$; 
        \item \label{part:ptfae-some}
        $e_*: \Mh{n}(X) \to \Mh{n}(X)$ is the identity for some $n
        \geq 1$; 
        \item \label{part:ptfae-ib}
        $e(x) = x$ for all $x \in \rho X$.
    \end{enumerate}
\end{proposition}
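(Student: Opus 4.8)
The plan is to prove the three implications \bref{part:ptfae-ib}~$\Rightarrow$~\bref{part:ptfae-all}, \bref{part:ptfae-all}~$\Rightarrow$~\bref{part:ptfae-some}, and \bref{part:ptfae-some}~$\Rightarrow$~\bref{part:ptfae-ib}. The second is trivial. The first is immediate from Corollary~\ref{cor:parallel-ib}: applied to the two maps $e, \mathrm{id}_X : X \to X$, which agree on $\rho X$ by hypothesis, it gives $e_* = (\mathrm{id}_X)_* = \mathrm{id}$ on $\MH{n}{\ell}(X)$ for every $n \geq 1$ and $\ell \in \R^+$, hence on $\Mh{n}(X)$ for every $n \geq 1$.

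The substance is \bref{part:ptfae-some}~$\Rightarrow$~\bref{part:ptfae-ib}. Fix $n \geq 1$ for which $e_*$ is the identity on $\Mh{n}(X)$. By the naturality in Theorem~\ref{thm:thin_chains}, for each $\ell \in \R^+$ the canonical isomorphism $\Z\THIN{n}{\ell}(X) \cong \MH{n}{\ell}(X)$ carries $e_\star$ to $e_*$; since $e_*$ is the identity, $e_\star$ is the identity on $\Z\THIN{n}{\ell}(X)$ for every $\ell$. Now fix $x \in \rho X$ and choose a point $x'$ adjacent to $x$. The key observation is that the alternating $(n+1)$-tuple $\bm{c} = (x, x', x, x', \ldots)$ is a thin chain: consecutive entries are distinct and adjacent, and for each interior index $i$ the entries $x_{i-1}$ and $x_{i+1}$ are equal, so $[x_{i-1}, x_{i+1}]$ is the single point $x_{i-1}$, which differs from $x_i$; thus all three clauses of the definition of thin chain hold. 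Since $e_\star$ fixes every thin chain, $e_\star(\bm{c}) = \bm{c} \neq 0$. But by definition $e_\star(\bm{c})$ equals $(e(x), e(x'), e(x), \ldots)$ if that tuple is a thin chain of the same length, and $0$ otherwise; being nonzero, it must be the former, so $(e(x), e(x'), e(x), \ldots) = (x, x', x, \ldots)$, and comparing first coordinates gives $e(x) = x$. As $x \in \rho X$ was arbitrary, \bref{part:ptfae-ib} follows.

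I do not foresee a real obstacle; the whole argument hinges on producing the alternating thin chain. The one point that wants care is checking that $\bm{c}$ satisfies the thinness condition $x_i \notin [x_{i-1}, x_{i+1}]$ for interior $i$, which works precisely because $[a, a] = \{a\}$ in any metric space, so that the intervals in question collapse to single points. It is also worth recording the degenerate case $\rho X = \emptyset$: then $X$ has no thin chains of positive degree, so $\Mh{n}(X) = 0$ for all $n \geq 1$ by Theorem~\ref{thm:thin_chains}, and all three conditions hold trivially (condition~\bref{part:ptfae-ib} vacuously); the argument above is only needed, and only used, when $\rho X \neq \emptyset$.
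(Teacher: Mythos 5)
Your proof is correct and essentially matches the paper's: the cycle of implications is arranged identically, the crucial step \bref{part:ptfae-some}~$\Rightarrow$~\bref{part:ptfae-ib} uses the same alternating thin chain $(x, x', x, \ldots)$ together with naturality of the Kaneta--Yoshinaga isomorphism, and your derivation of \bref{part:ptfae-ib}~$\Rightarrow$~\bref{part:ptfae-all} via Corollary~\ref{cor:parallel-ib} (applied to $e$ and $\mathrm{id}_X$) is just a tidy repackaging of the paper's direct thin-chain argument, since that corollary is itself an immediate consequence of Theorem~\ref{thm:thin_chains}.
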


\begin{proof}
    That~\bref{part:ptfae-all} implies~\bref{part:ptfae-some} is
    immediate. Now assume~\bref{part:ptfae-some}, and choose such an
    $n$. To prove~\bref{part:ptfae-ib}, let $x \in \rho X$, and choose $x'
    \in X$ adjacent to $x$.  Then the alternating $(n + 1)$-tuple
    $(x,x',x,x',\ldots)$ is a thin chain. By the naturality of the
    isomorphism in Theorem~\ref{thm:thin_chains} 
    and alignedness, $e_\star(\bm{x}) = \bm{x}$, giving $(x, x', \ldots) =
    (e(x), e(x'), \ldots)$ and so $e(x) = x$.

    Lastly, assuming~\bref{part:ptfae-ib}, we
    prove~\bref{part:ptfae-all}. Let $n \geq 1$. By naturality and
    alignedness again, it is enough to prove that $e_\star(\bm{x}) =
    \bm{x}$ for each $\bm{x} = (x_0, \ldots, x_n) \in \THIN{n}{*}(X)$. But
    $x_0, \ldots, x_n \in \rho X$ since $n \geq 1$, so $e(x_i) = x_i$ for
    each $i$, as required.
\end{proof}

We now show that for two maps $\oppairu{X}{Y}$ of metric spaces to be
mutually inverse in positive degree magnitude homology is equivalent to a
concrete geometric condition.

\begin{theorem}
\label{thm:op-tfae}
Let $\oppair{X}{Y}{f}{g}$ be maps of aligned metric spaces. The following
are equivalent:
\begin{enumerate}
\item 
\label{part:ot-all}
the maps $\oppair{\Mh{n}(X)}{\Mh{n}(Y)}{f_*}{g_*}$ are mutually
inverse for all $n \geq 1$;

\item
\label{part:ot-some}
the maps $\oppair{\Mh{n}(X)}{\Mh{n}(Y)}{f_*}{g_*}$ are mutually
inverse for some $n \geq 1$;

\item
\label{part:ot-ib}
$f$ and $g$ restrict to mutually inverse isometries $\oppairu{\rho X}{\rho
Y}$.
\end{enumerate}
\end{theorem}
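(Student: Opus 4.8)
The plan is to prove the cycle of implications $\bref{part:ot-all} \Rightarrow \bref{part:ot-some} \Rightarrow \bref{part:ot-ib} \Rightarrow \bref{part:ot-all}$. The first implication is trivial. For the last one, $\bref{part:ot-ib} \Rightarrow \bref{part:ot-all}$, suppose $f$ and $g$ restrict to mutually inverse isometries $\oppairu{\rho X}{\rho Y}$. Then in particular $gf(x) = x$ for all $x \in \rho X$ and $fg(y) = y$ for all $y \in \rho Y$. Applying Proposition~\ref{prop:positive_tfae} to the endomorphism $gf$ of $X$ (using the equivalence $\bref{part:ptfae-ib} \Rightarrow \bref{part:ptfae-all}$), we get $(gf)_* = g_* f_* = \mathrm{id}$ on $\Mh{n}(X)$ for all $n \geq 1$; symmetrically $f_* g_* = \mathrm{id}$ on $\Mh{n}(Y)$. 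Hence $f_*$ and $g_*$ are mutually inverse for all $n \geq 1$, which is $\bref{part:ot-all}$.

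The substantive step is $\bref{part:ot-some} \Rightarrow \bref{part:ot-ib}$. Fix $n \geq 1$ for which $f_*$ and $g_*$ are mutually inverse on $\Mh{n}$. Then $g_* f_* = \mathrm{id}$ on $\Mh{n}(X)$, so by Proposition~\ref{prop:positive_tfae} (the implication $\bref{part:ptfae-some} \Rightarrow \bref{part:ptfae-ib}$ applied to the endomorphism $e = gf$ of $X$), we get $gf(x) = x$ for all $x \in \rho X$; symmetrically $fg(y) = y$ for all $y \in \rho Y$. Now Lemma~\ref{lem:restrict_inner_boundary}\bref{part:rib-fg} applies directly: from $gf|_{\rho X} = \mathrm{id}$ and $fg|_{\rho Y} = \mathrm{id}$ it concludes that $f$ and $g$ restrict to mutually inverse isometries $\oppairu{\rho X}{\rho Y}$, which is exactly $\bref{part:ot-ib}$.

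The main obstacle is purely bookkeeping: one must be careful that Proposition~\ref{prop:positive_tfae} is stated for a single aligned space and its endomorphisms, so it has to be invoked twice, once for $gf: X \to X$ and once for $fg: Y \to Y$, and in each case one needs the relevant composite $f_* g_*$ or $g_* f_*$ to be the identity, which is precisely what "mutually inverse" supplies. There is no genuine difficulty beyond threading these two applications together with Lemma~\ref{lem:restrict_inner_boundary}; in particular, no new geometric or homological input is needed, since Theorem~\ref{thm:thin_chains} has already done the real work inside Proposition~\ref{prop:positive_tfae}.
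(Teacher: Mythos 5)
Your proposal is correct and follows essentially the same route as the paper's proof: the cycle $\bref{part:ot-all}\Rightarrow\bref{part:ot-some}\Rightarrow\bref{part:ot-ib}\Rightarrow\bref{part:ot-all}$, with Proposition~\ref{prop:positive_tfae} applied to the two composites $gf$ and $fg$, and Lemma~\ref{lem:restrict_inner_boundary}\bref{part:rib-fg} supplying the upgrade from fixed-point conditions to mutually inverse isometries of the inner boundaries.
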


\begin{proof}
\bref{part:ot-all}$\implies$\bref{part:ot-some} is trivial. 

For \bref{part:ot-some}$\implies$\bref{part:ot-ib}, take $n \geq 1$ as
in~\bref{part:ot-some}. Then $(gf)_*: \Mh{n}(X) \to \Mh{n}(X)$ is the
identity, so by Proposition~\ref{prop:positive_tfae}, $gf(x) = x$ for all
$x \in \rho X$. Similarly, $fg(y) = y$ for all $y \in \rho
X$. Then~\bref{part:ot-ib} follows from
Lemma~\ref{lem:restrict_inner_boundary}\bref{part:rib-fg}. 

\bref{part:ot-ib}$\implies$\bref{part:ot-all} follows from
Proposition~\ref{prop:positive_tfae} applied to $gf$ and $fg$. 
\end{proof}

At this point, it follows easily that the positive-degree magnitude homology of a \emph{non}convex closed Euclidean set is isomorphic to that of its core. Indeed, the retraction $X \to \core(X)$ of Proposition~\ref{propn:core-retract} and the inclusion $\core(X) \to X$ satisfy condition \bref{part:ot-ib} of the theorem by Lemma~\ref{lemma:core-elem}\bref{part:ce-sub}. We will return to this fact in the next section (see Theorem~\ref{thm:equiv-to-core}).

\begin{remarks}
\label{rmks:opposing-maps}
The following counterexamples illustrate the essential role of the two
opposing maps in Theorem~\ref{thm:op-tfae}.
\begin{enumerate}
\item 
\label{rmk:om-ib}
Spaces with isometric inner boundaries need not have isomorphic magnitude
homology groups in positive degree.  For example, consider $X = \{0, 1, 2,
3\}$ and $Y = \{0\} \cup [1,2] \cup \{3\}$, metrised as subspaces of
$\R$. Then $\rho X = \rho Y = X$. However,
\begin{align*}
\THIN{1}{1}(X)  &
=
\{ (0, 1), (1, 0), (1, 2), (2, 1), (2, 3), (3, 2) \},   \\
\THIN{1}{1}(Y)  &
=
\{ (0, 1), (1, 0), (2, 3), (3, 2) \},   
\end{align*}
giving $\MH{1}{1}(X) = 6\Z$ but $\MH{1}{1}(Y) = 4\Z$ by
Theorem~\ref{thm:thin_chains}.

\item
There are examples of maps of aligned spaces $f: X \to Y$ such that $f_*:
\Mh{n}(X) \to \Mh{n}(Y)$ is an isomorphism for some but not all $n
\geq 1$.

Indeed, let $X = \{0,1,2\} \subseteq \R$. Let $v_0, v_1, v_2 \in \R^2$ be
the vertices of an equilateral triangle of edge length $1$, and let $Y$ be
$\conv\{v_0, v_1, v_2\}$ with the open line segments $(v_0, v_1)$ and
$(v_1, v_2)$ removed. Define $f : X \to Y$ by $f(i) = v_i$ for $i = 0, 1,
2$. We have
\begin{align*}
\THIN{1}{1}(X)  &
= \{(0, 1), (1, 0), (1, 2), (2, 1)\},   \\
\THIN{1}{1}(Y)  &
= \{(v_0, v_1), (v_1, v_0), (v_1, v_2), (v_2, v_1)\},   
\end{align*}
with $\THIN{1}{\ell}(X) = \emptyset = \THIN{1}{\ell}(Y)$ for $\ell \neq
1$. Hence by Theorem~\ref{thm:thin_chains}, $f_*: \MH{1}{*}(X) \to
\MH{1}{*}(Y)$ is an isomorphism. On the other hand,
\begin{align*}
\THIN{2}{1}(X)  &
= \{(0, 1, 0), (1, 0, 1), (1, 2, 1), (2, 1, 2)\},   \\
\THIN{2}{1}(Y)  &
= \{(v_0, v_1, v_0), (v_0, v_1, v_2), (v_1, v_0, v_1), (v_1, v_2, v_1),
(v_2, v_1, v_0), (v_2, v_1, v_2)\}.   
\end{align*}
Hence $\MH{2}{1}(X) = 4\Z$ and $\MH{2}{1}(Y) = 6\Z$, and so $f_*:
\MH{2}{*}(X) \to \MH{2}{*}(Y)$ is not an isomorphism.

\item 
\label{rmk:om-circles}
There are also examples of aligned spaces $X$ and $Y$ such that
$\MH{n}{\ell}(X) \cong \MH{n}{\ell}(Y)$ for all $n \geq 0$ and $\ell \in
\R^+$, but for which there is no map $X \to Y$ inducing isomorphisms in
homology.

Let $X \subseteq \R^2$ be the unit circle centred at the origin, with the
subspace metric. Let $Y \subseteq \R^2$ be the union of $X$ and a circle of
radius $1$ centred at $(2,0)$. In both spaces, the set of pairs of adjacent
points distance $\ell$ apart has continuum cardinality when $\ell \leq 2$
and is empty otherwise. By considering chains of the form $(z, z', z, z',
\ldots)$, we deduce that $\THIN{n}{\ell}(X)$ and $\THIN{n}{\ell}(Y)$ have
continuum cardinality for all $\ell \leq 2n$ and are empty otherwise. It
follows from Theorem~\ref{thm:thin_chains} that $\MH{*}{*}(X) \cong
\MH{*}{*}(Y)$.

We now show that there is no map $f: X \to Y$ such that $f_*: \MH{1}{*}(X)
\to \MH{1}{*}(Y)$ is an isomorphism of $\R^+$-graded abelian
groups. Suppose for a contradiction that such a map $f$ exists.

Theorem~\ref{thm:thin_chains} implies that $\MH{1}{*}(X)$ is the free
abelian group on the set of ordered pairs of adjacent points in $X$, and
similarly for $Y$. It also implies that $f_\star: \Z \THIN{1}{\ell}(X) \to
\Z \THIN{1}{\ell}(Y)$ is an isomorphism for each $\ell \in \R^+$. In
particular, $f_\star$ is surjective, which since $\rho Y = Y$ implies that
$f$ is surjective. This is a contradiction: there is no short
surjection $X \to Y$, since the diameter of $Y$ is strictly larger than
that of $X$.
\end{enumerate}
\end{remarks}

\section{Magnitude homology equivalence}
\label{sec:mheq}

Theorem~\ref{thm:op-tfae} tells us when two maps of aligned metric spaces
$\oppairu{X}{Y}$ induce mutually inverse isomorphisms in magnitude
homology. This raises a question: given only the spaces $X$ and $Y$, when
does such a pair of maps exist? This is the question answered by the main
theorem, in the next section. Here we build up to it by considering
inclusions and retractions. In keeping with Definition~\ref{defn:map}, a
\demph{retraction} is by definition short, and the term
\demph{retract} is used accordingly.

\begin{proposition}
\label{propn:incl}
Let $X$ be an aligned metric space and $C$ a convex subset of $X$ such that
$\rho X \sub C$. Then the inclusion $C \hookrightarrow X$ induces an
isomorphism $\Mh{n}(C) \to \Mh{n}(X)$ for all $n \geq 1$.
\end{proposition}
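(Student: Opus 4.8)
The plan is to use the thin-chain description (Theorem~\ref{thm:thin_chains}) to reduce the statement to a purely combinatorial claim about thin chains, and then to exploit that $\rho X \sub C$ to show that $C$ and $X$ have \emph{exactly the same} thin chains. First I would observe that every thin chain $(x_0, \ldots, x_n)$ in $X$ of degree $n \geq 1$ has all of its entries in $\rho X$, hence in $C$. So the entries lie in $C$; the issue is whether such a chain is still thin when viewed inside $C$, since thinness involves the adjacency relation and the intervals $[x_{i-1}, x_{i+1}]$, both of which are computed relative to the ambient space.

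The key step is therefore to check that, because $C$ is convex in $X$, the relevant betweenness data agree in $C$ and in $X$. Concretely, for $x, y \in C$ we have $[x, y]_C = [x, y]_X$ (the interval computed in $C$ equals the one computed in $X$): the inclusion $[x,y]_C \sub [x,y]_X$ is trivial, and the reverse inclusion is exactly convexity of $C$ in $X$. Consequently two points of $C$ are adjacent in $C$ if and only if they are adjacent in $X$, and for a chain with entries in $C$ the condition $x_i \notin [x_{i-1}, x_{i+1}]$ is the same whether interpreted in $C$ or in $X$. Combining this with the first observation, $\THIN{n}{\ell}(C) = \THIN{n}{\ell}(X)$ as \emph{sets} for every $n \geq 1$ and $\ell \in \R^+$, and the inclusion $C \hookrightarrow X$ induces the identity map between them on the nose (it sends each thin chain to itself). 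I also need to know $C$ is aligned so that Theorem~\ref{thm:thin_chains} applies to it, but that is immediate since any subspace of an aligned space is aligned.

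Finally I would assemble these pieces: by naturality of the Kaneta--Yoshinaga isomorphism (Theorem~\ref{thm:thin_chains}), the square relating $\Z\THIN{n}{\ell}(C) \to \MH{n}{\ell}(C)$, $\Z\THIN{n}{\ell}(X) \to \MH{n}{\ell}(X)$ and the two maps induced by the inclusion commutes; since the left-hand vertical map is the identity isomorphism of free abelian groups on equal generating sets and the horizontal maps are isomorphisms, the right-hand vertical map $\MH{n}{\ell}(C) \to \MH{n}{\ell}(X)$ is an isomorphism for each $\ell$, hence so is $\Mh{n}(C) \to \Mh{n}(X)$ for all $n \geq 1$.

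I do not expect a serious obstacle here; the only point needing a little care is the claim $\THIN{n}{\ell}(C) = \THIN{n}{\ell}(X)$, and specifically that an $n$-chain ($n \geq 1$) which is thin in $X$ has all entries in $\rho X$ — this is the observation already flagged in the text just before Corollary~\ref{cor:parallel-ib} (adjacency of consecutive entries forces each $x_i$ into $\rho X$). Everything else is bookkeeping with the convexity hypothesis and the functoriality of $\Z\THIN{n}{\ell}$.
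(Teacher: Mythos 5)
Your proposal is correct and follows essentially the same route as the paper: reduce to the thin-chain model via Theorem~\ref{thm:thin_chains}, note that entries of a positive-degree thin chain lie in $\rho X \sub C$, and use convexity of $C$ in $X$ to see that adjacency and the interval conditions agree in $C$ and $X$, so $\iota_\star$ is a bijection on thin chains. The paper phrases this as separate injectivity and surjectivity checks for $\iota_\star$ rather than as the equality $\THIN{n}{\ell}(C) = \THIN{n}{\ell}(X)$, but the content is identical.
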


\begin{proof}
Write $\iota: C \hookrightarrow X$ for the inclusion. By
Theorem~\ref{thm:thin_chains}, it is enough
to show that for each $n \geq 1$ and $\ell \in \R^+$, the map 
\[
\iota_\star: \Z \THIN{n}{\ell}(C) \to \Z \THIN{n}{\ell}(X)
\]
is an isomorphism.

It is injective if every thin chain in $C$ is a thin chain in $X$, which is
true by the convexity of $C$ in $X$. It is surjective if every thin chain
$(x_0, \ldots, x_n)$ in $X$ is a thin chain in $C$. Since $n \geq 1$, we
have $x_i \in \rho X \sub C$ for each $i \in \{0, \ldots, n\}$. Hence
$(x_0, \ldots, x_n)$ is a chain in $C$, and thin in $C$ since it is thin in
$X$.
\end{proof}

Taking $C = \emptyset$ in Proposition~\ref{propn:incl} gives the following
corollary, which also follows directly from Theorem~\ref{thm:thin_chains}
(as Kaneta and Yoshinaga observed in the introduction
to~\cite{Kaneta2021}), and was proved independently by Jubin as Theorem~7.2
of~\cite{Jubi}.

\begin{corollary}\label{cor:empty_inner_boundary}
A Menger convex aligned metric space has trivial magnitude homology
in positive degree. 
\end{corollary}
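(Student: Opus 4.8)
The plan is to apply Proposition~\ref{propn:incl} in the special case where the relevant convex subset is empty. A metric space $X$ being Menger convex means precisely that $\rho X = \emptyset$ (Example~\ref{egs:boundary}\bref{eg:bdy-menger}). So take $C = \emptyset$ in Proposition~\ref{propn:incl}: the hypothesis $\rho X \sub C$ becomes $\emptyset \sub \emptyset$, which holds trivially, and $\emptyset$ is vacuously convex in $X$. The proposition then says that the inclusion $\emptyset \hookrightarrow X$ induces an isomorphism $\Mh{n}(\emptyset) \to \Mh{n}(X)$ for all $n \geq 1$.

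It remains only to observe that $\Mh{n}(\emptyset) = 0$ for all $n$, since the magnitude chain complex of the empty space has no generators in any degree (there are no proper chains in $\emptyset$). Hence $\Mh{n}(X) = 0$ for all $n \geq 1$, as claimed.

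I do not anticipate any real obstacle here: this is a direct corollary, and the only thing to check is that the degenerate case $C = \emptyset$ is genuinely permitted by the hypotheses of Proposition~\ref{propn:incl}, which it is. One could alternatively bypass the proposition entirely and argue straight from Theorem~\ref{thm:thin_chains}: when $\rho X = \emptyset$, no point of $X$ is adjacent to any other, so $\THIN{n}{\ell}(X) = \emptyset$ for every $n \geq 1$ and every $\ell$ (a thin chain of positive degree requires consecutive points to be adjacent), whence $\MH{n}{\ell}(X) \cong \Z\THIN{n}{\ell}(X) = 0$. Either route is a one-line argument; the paper's phrasing suggests presenting it as the $C = \emptyset$ instance of Proposition~\ref{propn:incl}, with the remark that it also follows directly from the structure theorem.
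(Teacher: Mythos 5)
Your proof is correct and matches the paper's own route exactly: the corollary is obtained by taking $C = \emptyset$ in Proposition~\ref{propn:incl}, and the paper also notes, as you do, that it follows directly from Theorem~\ref{thm:thin_chains}.
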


In particular, this corollary applies to any convex or open subset of
$\R^N$. 

\begin{definition}
Two metric spaces $X$ and $Y$ are \demph{magnitude homology equivalent} if
there exist maps $\oppairu{X}{Y}$ inducing mutually inverse maps
$\oppairu{\Mh{n}(X)}{\Mh{n}(Y)}$ for all $n \geq 1$.
\end{definition}

We do not require our maps to induce an isomorphism in degree $0$, since
by Remark~\ref{rmk:0_homology}, this would make $X$ and $Y$ isometric.

\begin{example}
\label{eg:convex-equiv}
All nonempty convex Euclidean sets are magnitude homology equivalent, by
Corollary~\ref{cor:empty_inner_boundary}.
\end{example}

\begin{proposition}
\label{propn:retract}
Let $X$ be an aligned metric space and $A$ a retract of $X$ such that
$\rho X \sub A$. Then $A$ and $X$ are magnitude homology equivalent. 
\end{proposition}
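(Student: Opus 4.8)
The plan is to exhibit explicit back-and-forth maps and apply Theorem~\ref{thm:op-tfae}. Let $r : X \to A$ be a retraction, so $r$ is distance-decreasing and $r(a) = a$ for all $a \in A$, and let $\iota : A \hookrightarrow X$ be the inclusion. These are the two maps $\oppairu{X}{Y}$ (here $Y = A$) that I claim witness magnitude homology equivalence. By Theorem~\ref{thm:op-tfae}, it suffices to check condition~\bref{part:ot-ib}: that $r$ and $\iota$ restrict to mutually inverse isometries $\oppairu{\rho X}{\rho A}$.

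First I would record that $\rho X \sub A$ by hypothesis, and that since $A$ is a subspace of the aligned space $X$ it is itself aligned (as noted after Definition~\ref{def:aligned}). The key observation is that $\rho A = \rho X$. Indeed, $\rho X \sub A$ means every point of $X$ adjacent (in $X$) to another point of $X$ already lies in $A$; one then checks that adjacency in $A$ and adjacency in $X$ coincide for such points. For the inclusion $\rho A \sub \rho X$: if $a, a' \in A$ are adjacent in $A$, then any point strictly between them in $X$ would have to lie in $\rho X \sub A$ (this is where $\rho X \sub A$ is used — a point strictly between $a$ and $a'$ witnesses that $a$ or $a'$ is in the inner boundary, hence is itself ``seen'' inside $A$; more directly, a point $p$ with $a \prec p \prec a'$ makes both $a$ and $p$ adjacent to nothing forced, so one argues instead that such $p$, lying in $[a,a'] \cap X$, would contradict adjacency in $A$ once we know $p \in A$). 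Cleaner: since $\rho X \subseteq A$, the set $X \setminus A$ contains no point of the inner boundary, so no point of $X \setminus A$ is strictly between two points of $X$; hence if $a,a' \in A$ have some $p \in X$ with $a \prec p \prec a'$, then $p \notin X \setminus A$, so $p \in A$, contradicting adjacency of $a,a'$ in $A$. Thus adjacent-in-$A$ implies adjacent-in-$X$, giving $\rho A \sub \rho X$. For the reverse, if $x$ is adjacent in $X$ to some $x'$, then $x, x' \in \rho X \sub A$, and the absence of a strictly-between point in $X$ certainly forbids one in $A$, so $x \in \rho A$. Hence $\rho A = \rho X$ as subsets of $X$, with the same metric.

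Given $\rho A = \rho X$, the restrictions are immediate: $\iota|_{\rho A} : \rho A \to \rho X$ is the identity map on this common set, hence an isometry, and $r|_{\rho X} : \rho X \to \rho A$ satisfies $r(x) = x$ for all $x \in \rho X$ because $\rho X \sub A$ and $r$ fixes $A$ pointwise; so it too is the identity, hence an isometry, and the two are visibly mutually inverse. Condition~\bref{part:ot-ib} of Theorem~\ref{thm:op-tfae} holds, so $r$ and $\iota$ induce mutually inverse isomorphisms $\oppairu{\Mh{n}(X)}{\Mh{n}(A)}$ for all $n \geq 1$, and $A$ and $X$ are magnitude homology equivalent.

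The only real content is the identity $\rho A = \rho X$, and within that the only subtle direction is $\rho A \sub \rho X$, which is exactly where the hypothesis $\rho X \sub A$ is essential — without it, $A$ could have new inner-boundary points arising because $X$-interior points were discarded (as in the annulus-to-inner-circle example, where the inclusion of the circle is not a retraction). I expect this to be the main (though still short) obstacle; everything else is formal, amounting to unwinding definitions and quoting Theorem~\ref{thm:op-tfae}.
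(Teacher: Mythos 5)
Your overall strategy — take a retraction $r : X \to A$ and the inclusion $\iota : A \hookrightarrow X$, and verify condition~\bref{part:ot-ib} of Theorem~\ref{thm:op-tfae} — is a legitimate alternative route to the paper's proof, which instead applies Corollary~\ref{cor:parallel-ib} to conclude directly that $(\iota\pi)_* = 1$. However, there is a genuine error in the step where you argue $\rho A \subseteq \rho X$. Your ``cleaner'' version rests on the assertion that, because $X \setminus A$ contains no point of $\rho X$, ``no point of $X \setminus A$ is strictly between two points of $X$''. This implication is false. Being strictly between two points is unrelated to being in the inner boundary: in $\R^2$ (or any Menger convex space) the inner boundary is empty, yet every point is strictly between two others. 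Knowing $p \notin \rho X$ tells you that $p$ is not an \emph{endpoint} of an adjacent pair; it says nothing about $p$ being an \emph{intermediate} point, which is what you need to rule out.

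The conclusion $\rho A = \rho X$ does hold, but the difficult direction genuinely requires the retraction, which your argument never invokes. If $a, a' \in A$ are adjacent in $A$ and some $p \in X$ satisfies $a \prec p \prec a'$, then since $r$ is $1$-Lipschitz and fixes $a, a'$, the chain of inequalities $d(a,a') \leq d(a,r(p)) + d(r(p),a') \leq d(a,p) + d(p,a') = d(a,a')$ forces $d(a,r(p)) = d(a,p) > 0$ and $d(r(p),a') = d(p,a') > 0$; so $r(p) \in A$ is strictly between $a$ and $a'$, contradicting adjacency in $A$. Alternatively, you can avoid proving $\rho A = \rho X$ by hand: since $\iota r(x) = r(x) = x$ for all $x \in \rho X$ (using $\rho X \subseteq A$) and $r\iota(a) = a$ for all $a \in \rho A$, Lemma~\ref{lem:restrict_inner_boundary}\bref{part:rib-fg} gives the mutually inverse isometries $\oppairu{\rho X}{\rho A}$ directly, after which your appeal to Theorem~\ref{thm:op-tfae} goes through unchanged.
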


\begin{proof}
Let $\iota: A \hookrightarrow X$ denote the inclusion, and choose a
retraction $\pi: X \to A$. Then $\iota\pi(x) = x$ for all $x \in \rho X$,
since $\rho X \sub A$. Hence by Corollary~\ref{cor:parallel-ib}, the map
\[
\iota_* \pi_* = (\iota\pi)_*: \Mh{n}(X) \to \Mh{n}(X)
\]
is the identity for all $n \geq 1$. On the other hand, $\pi\iota = 1_A$, so
$\pi_* \iota_*$ is the identity on $\Mh{n}(A)$. Hence $\pi_*$ and
$\iota_*$ are mutually inverse in positive degree.
\end{proof}

Alternatively, Proposition~\ref{propn:retract} can be derived from
Proposition~\ref{propn:incl} using the easily established fact that any
retract of a metric space $X$ is convex in $X$.

\begin{theorem}
\label{thm:equiv-to-core}
Every \emph{non}convex closed Euclidean set is magnitude homology
equivalent to its core.
\end{theorem}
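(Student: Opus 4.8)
The plan is to exhibit the inclusion and the metric projection as the required back-and-forth maps and then invoke the machinery already assembled. Since magnitude homology equivalence is visibly preserved under isometry, and the core is well-defined up to isometry, we may assume that $X$ is a closed, nonconvex subset of $\R^N$; being a subspace of $\R^N$, it is aligned.

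First I would note that, because $X$ is nonconvex, its inner boundary is nonempty (Example~\ref{egs:boundary}\bref{eg:bdy-menger}), so the metric projection $\pi: \R^N \to \cconv(\rho X)$ is defined. By Proposition~\ref{propn:core-retract}, $\pi$ maps $X$ into $X$, hence restricts and corestricts to a map $\pi: X \to \core(X)$, which is distance-decreasing because metric projection onto a convex set is $1$-Lipschitz. Next I would observe that this $\pi$ is a retraction onto $\core(X)$: any point of $\core(X)$ lies in $\cconv(\rho X)$, hence is its own nearest point there and so is fixed by $\pi$. Thus the inclusion $\iota: \core(X) \hookrightarrow X$ satisfies $\pi\iota = 1_{\core(X)}$, i.e.\ $\core(X)$ is a retract of $X$.

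Finally, $\rho X \sub \core(X)$ by Lemma~\ref{lemma:core-elem}\bref{part:ce-sub}. So $\core(X)$ is a retract of the aligned space $X$ that contains $\rho X$, and Proposition~\ref{propn:retract} immediately gives that $X$ and $\core(X)$ are magnitude homology equivalent.

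There is essentially no remaining obstacle once Proposition~\ref{propn:core-retract} is in hand: all that must be checked is that the restricted--corestricted metric projection is still a (distance-decreasing) map of metric spaces and that it fixes $\core(X)$ pointwise, both routine. One could equally bypass Proposition~\ref{propn:retract} and argue directly from Theorem~\ref{thm:op-tfae}: $\pi\iota = 1_{\core(X)}$ gives $\pi_*\iota_* = \mathrm{id}$, while $\iota\pi$ fixes $\rho X$ pointwise (as $\rho X \sub \core(X)$), so $(\iota\pi)_* = \mathrm{id}$ on $\Mh{n}(X)$ for all $n \geq 1$ by Proposition~\ref{prop:positive_tfae} (equivalently Corollary~\ref{cor:parallel-ib}), whence $\iota_*$ and $\pi_*$ are mutually inverse in positive degree.
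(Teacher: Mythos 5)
Your proof is correct and follows essentially the same route as the paper's: use Proposition~\ref{propn:core-retract} to realise $\core(X)$ as a retract of $X$ via metric projection, note $\rho X \sub \core(X)$, and invoke Proposition~\ref{propn:retract}. You spell out some details the paper leaves implicit (that $\pi$ fixes $\core(X)$ pointwise, so is genuinely a retraction), but the argument is the same.
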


The core of a convex set is the empty space
(Example~\ref{egs:core}\bref{eg:core-convex}), which is magnitude homology
equivalent only to itself, so the nonconvexity condition cannot be dropped.

\begin{proof}
Let $X \sub \R^N$ be a nonconvex closed set and $A = \core(X)$, which by
Proposition~\ref{propn:core-retract} is a retract of $X$. Then apply
Proposition~\ref{propn:retract}.
\end{proof}

Since the core construction is idempotent
(Lemma~\ref{lemma:core-elem}\bref{part:ce-idem}),
Theorem~\ref{thm:equiv-to-core} provides a canonical representative of each
magnitude homology equivalence class of nonconvex closed Euclidean sets.

Proposition~\ref{propn:retract} generates many examples of magnitude
homology equivalence:

\begin{proposition}
\label{propn:remove-interior}
Let $S \sub \R^N$. For nonempty convex closed sets $C \sub \R^N$
containing $S$, the magnitude homology equivalence class of $C \setminus
S^\circ$ is independent of the choice of $C$.
\end{proposition}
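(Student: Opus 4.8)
The plan is to reduce to the case of two nested convex sets and invoke Proposition~\ref{propn:retract}. First I would record that magnitude homology equivalence is an equivalence relation: reflexivity and symmetry are immediate, and transitivity follows by composing the two pairs of opposing maps and using that $\Mh{n}$ is a functor. I would also dispose of the case $S = \emptyset$ at once, since then $S^\circ = \emptyset$ and $C \setminus S^\circ = C$ is convex, so the statement is Example~\ref{eg:convex-equiv}. So assume $S \neq \emptyset$. Given $C_1, C_2$ as in the statement, the set $C_1 \cap C_2$ is nonempty (it contains $S$), closed and convex, and contains $S$. So it suffices to prove the following claim and apply it with $(C', C) = (C_1 \cap C_2, C_1)$ and $(C', C) = (C_1 \cap C_2, C_2)$: \emph{if $C' \sub C$ are nonempty closed convex subsets of $\R^N$ both containing $S$, then $C' \setminus S^\circ$ and $C \setminus S^\circ$ are magnitude homology equivalent.}

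To prove the claim, write $X = C \setminus S^\circ$ and $X' = C' \setminus S^\circ$; both are closed in $\R^N$, hence aligned. Let $\pi : \R^N \to C'$ be the metric projection onto $C'$, which is distance-decreasing and fixes $C'$ pointwise. The two sub-claims I need are that $\pi$ restricts to a retraction $X \to X'$ and that $\rho X \sub X'$; Proposition~\ref{propn:retract} then finishes the proof. For the retraction, since $\pi$ fixes $X' \sub C'$, I need only check $\pi(X) \sub X'$. If $x \in X \cap C'$ then $\pi(x) = x \notin S^\circ$. If $x \in X \setminus C'$, then $x \neq \pi(x)$, and supposing for contradiction that $\pi(x) \in S^\circ$, a ball $B_\delta(\pi(x))$ lies in $S^\circ \sub C'$; but then some point of the nonempty open segment $(x, \pi(x))$ lies in this ball, hence in $C'$, and is strictly closer to $x$ than $\pi(x)$ is --- contradicting the defining property of the metric projection. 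So $\pi(x) \in C' \setminus S^\circ = X'$ in all cases. In particular $X' \neq \emptyset$ whenever $X \neq \emptyset$, so the retract statement is not vacuous, and if $X = \emptyset$ the claim is trivial.

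The step I expect to be the crux is $\rho X \sub X'$, and here the convexity of $C$ is what does the work. Suppose $x \in \rho X$, witnessed by some $y \in X$ with $y \neq x$ and the open Euclidean segment $(x, y)$ disjoint from $X$ (recall that intervals of the subspace $X \sub \R^N$ are Euclidean segments intersected with $X$). Since $x, y \in C$ and $C$ is convex, $(x, y) \sub C$, so $(x, y)$ avoids $\R^N \setminus C$; as $\R^N \setminus X = (\R^N \setminus C) \cup S^\circ$, this forces $(x, y) \sub S^\circ$. Then $x \in \overline{S^\circ} \sub \overline{S} \sub C'$ --- using that $C'$ is closed and contains $S$ --- while $x \notin S^\circ$ because $x \in X$. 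Hence $x \in C' \setminus S^\circ = X'$, proving the sub-claim and, with it, the proposition.
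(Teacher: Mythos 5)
Your proof is correct and uses the same essential machinery as the paper: Proposition~\ref{propn:retract} applied to the metric projection onto a smaller convex set, together with a check that the inner boundary of the larger space lands in the smaller one. The structural difference is in the decomposition: you compare $C_1 \setminus S^\circ$ and $C_2 \setminus S^\circ$ through the intermediate $( C_1 \cap C_2 ) \setminus S^\circ$, which requires a separate $S = \emptyset$ case (to ensure $C_1 \cap C_2 \neq \emptyset$) and an explicit appeal to transitivity of magnitude homology equivalence. The paper instead compares every $C \setminus S^\circ$ directly to $\R^N \setminus S^\circ$, exploiting the fact that $\R^N$ is automatically a nonempty closed convex superset of each allowed $C$; this universal comparator eliminates the intersection step, the empty-$S$ case split, and the use of transitivity. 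Your subclaim $\rho X \sub X'$ via the observation that the open segment witnessing adjacency must lie in $S^\circ$ is a hands-on unpacking of the paper's one-liner $\rho X \sub \partial X = \partial S^\circ \sub C$ from Remark~\ref{rmk:bdys}, and your elementary contradiction argument that $\pi(x) \notin S^\circ$ replaces the paper's citation of the general fact that metric projection sends exterior points to $\partial C'$. Both routes are sound; the paper's is a little more economical, yours a little more self-contained.
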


\begin{proof}
We show that for every such $C$, the space $C \setminus S^\circ$ is
magnitude homology equivalent to $X = \R^N \setminus S^\circ$. To do this,
we apply Proposition~\ref{propn:retract} with $A = C \setminus S^\circ$. It
remains to verify that $\rho X \sub A$ and that $A$ is a retract of $X$.

First, $\rho X \sub \partial X = \partial S^\circ \sub C$, using
Remark~\ref{rmk:bdys}. Hence $\rho X \sub C \cap X = A$.

Now we show that $A$ is a retract of $X$. Let $\pi$ denote metric
projection $\R^N \to C$. It is enough to prove that $\pi X \sub A$, so let
$x \in X$. If $x \in C$ then $\pi(x) = x \in C \cap X = A$. If $x \not\in
C$ then $\pi(x) \in \partial C$ (a general property of metric
projections), which since $S \sub C$ implies that $\pi(x) \in C \setminus
S^\circ = A$. In either case, $\pi(x) \in A$, as required.
\end{proof}

Figure~\ref{fig:equiv_spaces} shows an example of
Proposition~\ref{propn:remove-interior}.

\begin{figure}
    \centering
    \def\svgwidth{\columnwidth}
\begingroup%
  \makeatletter%
  \providecommand\color[2][]{%
    \errmessage{(Inkscape) Color is used for the text in Inkscape, but the package 'color.sty' is not loaded}%
    \renewcommand\color[2][]{}%
  }%
  \providecommand\transparent[1]{%
    \errmessage{(Inkscape) Transparency is used (non-zero) for the text in Inkscape, but the package 'transparent.sty' is not loaded}%
    \renewcommand\transparent[1]{}%
  }%
  \providecommand\rotatebox[2]{#2}%
  \newcommand*\fsize{\dimexpr\f@size pt\relax}%
  \newcommand*\lineheight[1]{\fontsize{\fsize}{#1\fsize}\selectfont}%
  \ifx\svgwidth\undefined%
    \setlength{\unitlength}{675bp}%
    \ifx\svgscale\undefined%
      \relax%
    \else%
      \setlength{\unitlength}{\unitlength * \real{\svgscale}}%
    \fi%
  \else%
    \setlength{\unitlength}{\svgwidth}%
  \fi%
  \global\let\svgwidth\undefined%
  \global\let\svgscale\undefined%
  \makeatother%
  \begin{picture}(1,0.33333333)%
    \lineheight{1}%
    \setlength\tabcolsep{0pt}%
    \put(0,0){\includegraphics[width=\unitlength,page=1]{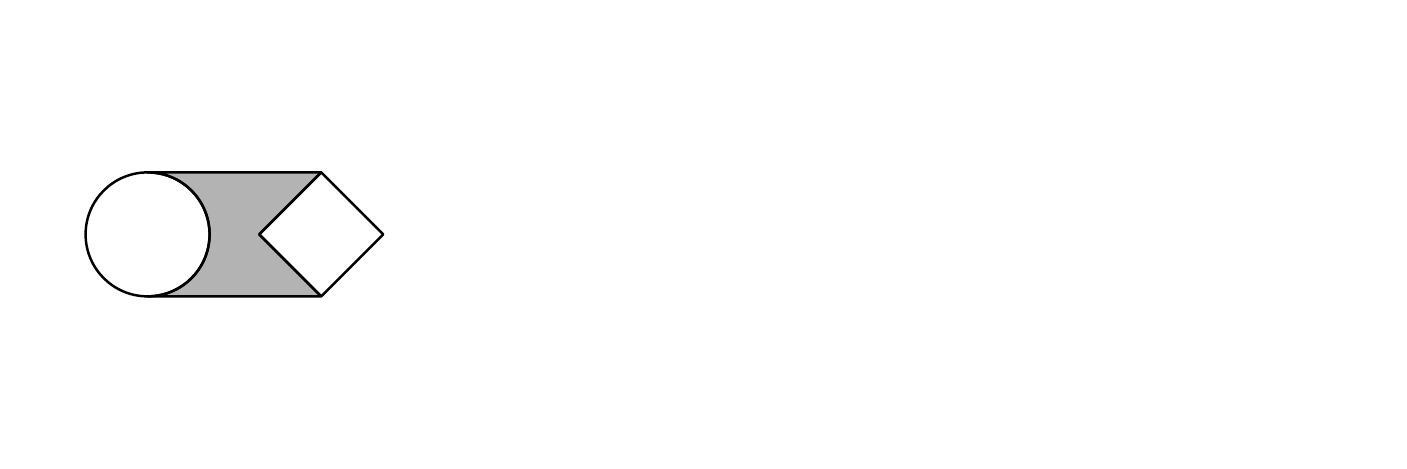}}%
    \put(0.16660882,0.02222221){\color[rgb]{0,0,0}\makebox(0,0)[t]{\lineheight{1.25}\smash{\begin{tabular}[t]{c}(a)\end{tabular}}}}%
    \put(0,0){\includegraphics[width=\unitlength,page=2]{equivalent_spaces.pdf}}%
    \put(0.46055156,0.02222221){\color[rgb]{0,0,0}\makebox(0,0)[t]{\lineheight{1.25}\smash{\begin{tabular}[t]{c}(b)\end{tabular}}}}%
    \put(0.81105939,0.02222221){\color[rgb]{0,0,0}\makebox(0,0)[t]{\lineheight{1.25}\smash{\begin{tabular}[t]{c}(c)\end{tabular}}}}%
  \end{picture}%
\endgroup%

    \caption{Three magnitude homology equivalent spaces, where the set $S$
    of Proposition~\ref{propn:remove-interior} is the union of a disc and a
    filled square. In~(a), $C = \cconv(S)$; in~(b), $C$ is a filled ellipse;
    in~(c), $C = \R^2$.}
    \label{fig:equiv_spaces}
\end{figure}

\begin{corollary}
For $\emptyset \neq S \sub \R^N$, the space $\cconv(S) \setminus
S^\circ$ is magnitude homology equivalent to $\R^N \setminus S^\circ$.
\end{corollary}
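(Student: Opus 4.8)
The plan is to deduce this at once from Proposition~\ref{propn:remove-interior}. The only thing to observe is that $C := \cconv(S)$, the closed convex hull of $S$ in $\R^N$, is itself an admissible choice of the set $C$ in that proposition: it is convex and closed by construction, it contains $S$, and it is nonempty precisely because $S \neq \emptyset$. Likewise, $\R^N$ is a nonempty closed convex subset of $\R^N$ containing $S$.

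Now recall that the proof of Proposition~\ref{propn:remove-interior} in fact establishes slightly more than its statement: for \emph{every} nonempty closed convex $C \sub \R^N$ with $S \sub C$, the space $C \setminus S^\circ$ is magnitude homology equivalent to $\R^N \setminus S^\circ$. Applying this with $C = \cconv(S)$ immediately yields that $\cconv(S) \setminus S^\circ$ is magnitude homology equivalent to $\R^N \setminus S^\circ$, which is the assertion. (Alternatively, using only the statement of Proposition~\ref{propn:remove-interior}: both $\cconv(S)$ and $\R^N$ are admissible choices of $C$, so $\cconv(S) \setminus S^\circ$ and $\R^N \setminus S^\circ$ lie in the same magnitude homology equivalence class; this uses only transitivity of magnitude homology equivalence, which holds by composing the two pairs of witnessing back-and-forth maps.)

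There is no genuine obstacle here: all the work is contained in Proposition~\ref{propn:remove-interior} and its proof, and the corollary is simply the special case $C = \cconv(S)$. The sole point to verify is that $\cconv(S)$ satisfies the hypotheses, and this is immediate from the definition of the closed convex hull together with the nonemptiness of $S$.
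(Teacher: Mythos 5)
Your proof is correct and takes exactly the same route as the paper: the corollary is stated in the paper as following immediately from Proposition~\ref{propn:remove-interior} by taking $C = \cconv(S)$, which is admissible since it is nonempty (as $S \neq \emptyset$), closed, convex, and contains $S$. Your further remarks on reading the proof versus the statement of the proposition, and on transitivity, are accurate but not strictly needed.
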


This follows immediately from Proposition~\ref{propn:remove-interior}, and
implies in turn:

\begin{corollary}
The boundary $\partial C$ of a nonempty convex set $C \sub \R^N$ is
magnitude homology equivalent to $\R^N \setminus C^\circ$.
\end{corollary}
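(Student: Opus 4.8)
The plan is to deduce this directly from the immediately preceding corollary by taking $S = C$. The only thing that needs checking is that $\cconv(C) \setminus C^\circ$ coincides, as a metric space, with the boundary $\partial C$.

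First I would record that since $C$ is convex, $\conv(C) = C$, and hence by Remark~\ref{rmk:L} (in $\R^N$, the closed convex hull of a set is the closure of its convex hull) we have $\cconv(C) = \overline{C}$. Next, for any subset of a topological space the boundary satisfies $\partial C = \overline{C} \setminus C^\circ$, where $C^\circ$ is the interior; therefore
\[
\cconv(C) \setminus C^\circ = \overline{C} \setminus C^\circ = \partial C
\]
as subsets of $\R^N$, hence as metric subspaces. Since also $\R^N \setminus S^\circ = \R^N \setminus C^\circ$, and $C \neq \emptyset$ so that the hypothesis $\emptyset \neq S$ of the previous corollary is satisfied, applying that corollary with $S = C$ yields that $\partial C$ is magnitude homology equivalent to $\R^N \setminus C^\circ$, as claimed.

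There is essentially no obstacle: the whole content is the point-set identity $\cconv(C) \setminus C^\circ = \partial C$, which is immediate once one knows $\cconv(C) = \overline{C}$ for convex $C$. Everything else is inherited from the corollary, whose proof in turn rests on Proposition~\ref{propn:remove-interior}.
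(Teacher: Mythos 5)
Your proof is correct and takes the same route the paper intends: the paper presents this corollary as following from the preceding one (on $\cconv(S) \setminus S^\circ$), and your argument is exactly the substitution $S = C$ together with the observation that $\cconv(C) \setminus C^\circ = \overline{C} \setminus C^\circ = \partial C$ for convex $C$.
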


\section{The main theorem}
\label{sec:main}

For our main theorem, recall that a closed Euclidean set is a metric space
isometric to a closed subset of $\R^N$ for some $N \geq 0$, and that maps
of metric spaces are taken to be short
(Definition~\ref{defn:map}).

\begin{theorem}
\label{thm:main}
Let $X$ and $Y$ be nonempty closed Euclidean sets. The following are
equivalent:
\begin{enumerate}
\item 
\label{part:main-all}
$X$ and $Y$ are magnitude homology equivalent; that is, there exist maps
$\oppair{X}{Y}{f}{g}$ such that
$\oppair{\Mh{n}(X)}{\Mh{n}(Y)}{f_*}{g_*}$ are mutually inverse for
all $n \geq 1$;

\item
\label{part:main-some}
there exist maps $\oppair{X}{Y}{f}{g}$ such that
$\oppair{\Mh{n}(X)}{\Mh{n}(Y)}{f_*}{g_*}$ are mutually inverse for
some $n \geq 1$;

\item 
\label{part:main-ib}
there exist maps $\oppair{X}{Y}{f}{g}$ restricting to mutually inverse
isometries $\oppairu{\rho X}{\rho Y}$;

\item 
\label{part:main-core-maps}
there exist maps $\oppair{X}{Y}{f}{g}$ restricting to mutually inverse
isometries $\oppairu{\core(X)}{\core(Y)}$;

\item
\label{part:main-core}
$\core(X)$ and $\core(Y)$ are isometric.
\end{enumerate}
\end{theorem}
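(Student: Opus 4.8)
The plan is to prove the cycle of implications $\bref{part:main-all} \Rightarrow \bref{part:main-some} \Rightarrow \bref{part:main-ib} \Rightarrow \bref{part:main-core-maps} \Rightarrow \bref{part:main-core} \Rightarrow \bref{part:main-all}$, using the machinery already assembled. Several of these are nearly immediate. The implication $\bref{part:main-all} \Rightarrow \bref{part:main-some}$ is trivial. For $\bref{part:main-some} \Rightarrow \bref{part:main-ib}$, observe that closed Euclidean sets are aligned (as subspaces of $\R^N$), so Theorem~\ref{thm:op-tfae} applies verbatim and gives exactly~\bref{part:main-ib}. The implication $\bref{part:main-ib} \Rightarrow \bref{part:main-core-maps}$ is Lemma~\ref{lemma:ib-to-core}. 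And $\bref{part:main-core-maps} \Rightarrow \bref{part:main-core}$ is immediate: a pair of mutually inverse isometries $\oppairu{\core(X)}{\core(Y)}$ in particular exhibits $\core(X)$ and $\core(Y)$ as isometric.

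So the substantive work is the closing implication $\bref{part:main-core} \Rightarrow \bref{part:main-all}$: assuming only that $\core(X) \cong \core(Y)$, we must build maps $\oppairu{X}{Y}$ that are mutually inverse in positive-degree magnitude homology. The strategy is to reduce to the core via Theorem~\ref{thm:equiv-to-core}. Split into cases according to convexity. If both $X$ and $Y$ are convex, then each has empty core (Example~\ref{egs:core}\bref{eg:core-convex}), the hypothesis~\bref{part:main-core} is automatic, and Example~\ref{eg:convex-equiv} (equivalently Corollary~\ref{cor:empty_inner_boundary}) gives that any two nonempty convex Euclidean sets are magnitude homology equivalent. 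If exactly one is convex, say $X$, then $\core(X) = \emptyset$ while $\core(Y) \neq \emptyset$ (since $Y$ nonconvex closed has $\rho Y \neq \emptyset \sub \core(Y)$ by Example~\ref{egs:boundary}\bref{eg:bdy-menger} and Lemma~\ref{lemma:core-elem}\bref{part:ce-sub}), so $\core(X) \not\cong \core(Y)$ and~\bref{part:main-core} fails — hence this case is vacuous. The main case is both $X$ and $Y$ nonconvex. Then Theorem~\ref{thm:equiv-to-core} gives that $X$ is magnitude homology equivalent to $\core(X)$ and $Y$ to $\core(Y)$; the hypothesis gives an isometry $\core(X) \cong \core(Y)$, and an invertible isometry is trivially a magnitude homology equivalence (its inverse induces the inverse map in homology). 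Composing these three equivalences — checking that magnitude homology equivalence is transitive, which it is, since back-and-forth maps compose and the induced homology maps compose correctly — yields that $X$ and $Y$ are magnitude homology equivalent, which is~\bref{part:main-all}.

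The main obstacle is a bookkeeping one rather than a conceptual one: verifying that magnitude homology equivalence is genuinely transitive as a relation, i.e.\ that given equivalences $\oppairu{X}{Z}$ and $\oppairu{Z}{W}$ one may compose to get an equivalence $\oppairu{X}{W}$. This is where it matters that we are working with the ``back-and-forth'' notion (option three from the introduction) rather than mere quasi-isomorphism: composites of maps of metric spaces are maps, functoriality of $\Mh{n}$ gives $(g' g)_* = g'_* g_*$ and $(f f')_* = f_* f'_*$, and from $f_* g_* = \mathrm{id}$, $g_* f_* = \mathrm{id}$ on $\Mh{n}(Z)$ and the analogous identities for the second pair one checks directly that the composites are mutually inverse in positive degree. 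One should also record explicitly that an invertible isometry $h \colon \core(X) \to \core(Y)$ furnishes a magnitude homology equivalence, taking the back-and-forth pair to be $h$ and $h^{-1}$; functoriality gives $(h^{-1})_* = (h_*)^{-1}$, so they are mutually inverse in every degree, in particular positive degree. With transitivity in hand the remaining assembly is routine.
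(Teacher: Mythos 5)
Your proposal is correct and follows essentially the same route as the paper: both reduce the easy implications to Theorem~\ref{thm:op-tfae} and Lemma~\ref{lemma:ib-to-core}, and establish \bref{part:main-core}$\implies$\bref{part:main-all} via Theorem~\ref{thm:equiv-to-core} after the same convex/nonconvex case split. The only difference is that you spell out the transitivity check and the vacuous mixed case, which the paper leaves implicit.
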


Most importantly, magnitude homology equivalence~\bref{part:main-all} is
equivalent to the concrete geometric condition~\bref{part:main-core}. 

\begin{proof}
Conditions \bref{part:main-all}--\bref{part:main-ib} are equivalent for all
aligned spaces, by Theorem~\ref{thm:op-tfae}.
\bref{part:main-ib}$\implies$\bref{part:main-core-maps} follows from
Lemma~\ref{lemma:ib-to-core}, and
\bref{part:main-core-maps}$\implies$\bref{part:main-core} is trivial.

\bref{part:main-core}$\implies$\bref{part:main-all} follows from
Theorem~\ref{thm:equiv-to-core} in the case where $X$ and $Y$ are both
nonconvex. By Example~\ref{egs:core}\bref{eg:core-convex}, the only other
possibility is that $X$ and $Y$ are both convex, in which
case~\bref{part:main-all} holds by Example~\ref{eg:convex-equiv}.
\end{proof}

\begin{remarks}
\begin{enumerate}
\item
The equivalent conditions of Theorem~\ref{thm:main} are strictly stronger
than the condition that $X$ and $Y$ have isometric inner boundaries, by
Remark~\ref{rmks:opposing-maps}\bref{rmk:om-ib}.

\item 
The equivalent conditions of Theorem~\ref{thm:main} are also strictly
stronger than the condition that there is a map $f: X \to Y$ such that
$f_*: \Mh{n}(X) \to \Mh{n}(Y)$ is an isomorphism for all $n \geq
1$. (And this in turn is stronger than the condition that $\Mh{n}(X)
\cong \Mh{n}(Y)$ for all $n \geq 1$:
Remark~\ref{rmks:opposing-maps}\bref{rmk:om-circles}.) 

Indeed, let $X = \{0\} \cup [1, 2] \cup \{3\}$ and $Y = \{0, 1, 2\}$, and
define $f: X \to Y$ by $f(0) = 0$, $f[1, 2] = \{1\}$ and $f(3) = 2$. For $n
\geq 1$, the thin chains in $X$ and $Y$ are given by
\begin{align*}
\THIN{n}{n}(X)  &
=
\{ (0, 1, 0, 1, \ldots), (1, 0, 1, 0, \ldots),
(2, 3, 2, 3, \ldots), (3, 2, 3, 2, \ldots) \},  \\
\THIN{n}{n}(Y)  &
=
\{ (0, 1, 0, 1, \ldots), (1, 0, 1, 0, \ldots),
(1, 2, 1, 2, \ldots), (2, 1, 2, 1, \ldots) \},  
\end{align*}
and $\THIN{n}{\ell}(X) = \emptyset = \THIN{n}{\ell}(Y)$ when $\ell \neq
n$. In all cases, the map $f_\star: \Z\THIN{n}{\ell}(X) \to
\Z\THIN{n}{\ell}(Y)$ is a bijection, so $f_*: \MH{n}{\ell}(X) \to
\MH{n}{\ell}(Y)$ is an isomorphism. However, $\core(X) = X$ and $\core(Y) =
Y$, which are not isometric.

In other words, magnitude homology equivalence is a stronger property than
quasi-isomorphism in positive degree. Finding a concrete geometric
description of quasi-isomorphism for magnitude homology remains an open
question. 
\end{enumerate}
\end{remarks}

\paragraph{Acknowledgements} The first author was funded by a PhD
scholarship from the Carnegie Trust for the Universities of Scotland.

\bibliography{magnitude}

\end{document}